\newtheorem{theorem}{Theorem}
\numberwithin{theorem}{section}
\newtheorem{proposition}[theorem]{Proposition}
\newtheorem{lemma}[theorem]{Lemma}
\newtheorem{corollary}[theorem]{Corollary}
\theoremstyle{remark}
\theoremstyle{definition}
\newtheorem{remark}[theorem]{Remark}
\newtheorem{definition}[theorem]{Definition}
\newtheorem{example}[theorem]{Example}
\numberwithin{equation}{section}
\newcommand\set[1]{\left\{\,#1\,\right\}}		
\newcommand\abs[1]{\left|#1\right|}				
\newcommand\ska[1]{\left\langle#1\right\rangle} 
\newcommand\norm[1]{\left\Vert#1\right\Vert}	
\DeclareMathOperator{\dist}{dist}				
\DeclareMathOperator{\sign}{sign}				
\DeclareMathOperator{\id}{id}					
\DeclareMathOperator{\tr}{tr}					
\DeclareMathOperator{\divv}{div}				
\def\N{\mathbb{N}}
\def\R{\mathbb{R}}
\newcommand{\cA}{{\mathcal A}}
\newcommand{\cC}{{\mathcal C}}
\newcommand{\cE}{{\mathcal E}}
\newcommand{\cF}{{\mathcal F}}
\newcommand{\cH}{{\mathcal H}}
\newcommand{\cS}{{\mathcal S}}
\newcommand{\cX}{{\mathcal X}}
\newcommand{\lamax}{\lambda_{\text{max}}}
\newcommand{\lamin}{\lambda_{\text{min}}}						
\begin{document}
\title{Relaxation of the Boussinesq system and applications to the Rayleigh-Taylor instability}
\author{Bj\"orn Gebhard \and J\'ozsef J. Kolumb\'an}
\date{}
\maketitle

\begin{abstract}
We consider the evolution of two incompressible fluids with homogeneous densities $\rho_-<\rho_+$ subject to gravity described by the inviscid Boussinesq equations and provide the explicit relaxation of the associated differential inclusion. 
The existence of a subsolution to the relaxation allows one to conclude the existence of turbulently mixing solutions to the original Boussinesq system.
As a specific application we investigate subsolutions emanating from the classical Rayleigh-Taylor initial configuration where the two fluids are separated by a horizontal interface with the heavier fluid being on top of the lighter. It turns out that among all self-similar subsolutions the criterion of maximal initial energy dissipation selects a linear density profile and a quadratic growth of the mixing zone. The subsolution selected this way can be extended in an admissible way to exist for all times. We provide two possible extensions with different long-time limits. The first one corresponds to a total mixture of the two fluids, the second corresponds to a full separation with the lighter fluid on top of the heavier. There is no motion in either of the limit states. 
\end{abstract}
%
%

\section{Introduction}

We investigate two incompressible fluids with homogeneous densities $0<\rho_-<\rho_+$ under the influence of gravity modelled by the Euler equations in Boussinesq approximation
\begin{align}\label{eq:bou}
\begin{split}
\partial_t v +\divv (v\otimes v) +\nabla p&=-\rho gA e_n,\\
\divv v&=0,\\
\partial_t \rho + \divv (\rho v)&=0.
\end{split}
\end{align}
The equations are considered on a bounded domain $\Omega\subset\R^n$ and a time interval $[0,T)$, $T>0$. The function $\rho:\Omega\times[0,T)\rightarrow\R$ is the normalized fluid density, i.e. $\rho\in\{\pm 1\}$ a.e., $v:\Omega\times[0,T)\rightarrow\R^n$ is the velocity field and $p:\Omega\times[0,T)\rightarrow\R$ the pressure of the fluid. Furthermore, $e_n\in\R^n$ denotes the $n$th coordinate vector, $g>0$ the gravitational constant and 
\[
A:=\frac{\rho_+-\rho_-}{\rho_++\rho_-}
\]
is the Atwood number. The incompressibility condition is complemented by the no-penetration boundary condition
\begin{equation}\label{eq:boundary_condition}
v\cdot \nu =0 \quad \text{on }\partial\Omega\times[0,T),
\end{equation}
where $\nu$ denotes the exterior unit normal of the boundary of $\Omega$, which is assumed to be sufficiently smooth. We will mostly  consider \eqref{eq:bou}, \eqref{eq:boundary_condition} with the unstable interface as initial data, i.e.,
\begin{align}\label{eq:initial_data}
\rho(x,0)=\sign(x_n),\quad v(x,0)=0,\quad x\in\Omega.
\end{align}

This initial data is a classical instance of the Rayleigh-Taylor instability which occurs whenever a lighter fluid or gas is accelerated into a heavier one -- a situation which appears in various research areas and applications, see \cite{Abarzhi,Boffetta,Zhou1,Zhou2} for an overview. Its linear (in)stability analysis goes back to Rayleigh \cite{Rayleigh} and Taylor \cite{Taylor}.

\subsection{General aspects of the Boussinesq system}

System \eqref{eq:bou} arises from the actual inhomogeneous incompressible Euler equations
\begin{align}\label{eq:euler_equations}
\begin{split}
\partial_t (\tilde{\rho} v) +\divv (\tilde{\rho}v\otimes v) +\nabla \tilde{p}&=-\tilde{\rho} g e_n,\\
\divv v&=0,\\
\partial_t \tilde{\rho} + \divv (\tilde{\rho} v)&=0,
\end{split}
\end{align}
via the normalization $\tilde{\rho}=\frac{1}{2}(\rho_++\rho_-)+\frac{1}{2}(\rho_+-\rho_-)\rho$, such that $\rho\in\set{\pm 1}$, and by the Boussinesq approximation, i.e., by neglecting the density difference $\rho_+-\rho_-$ in the acceleration term on the left-hand side of the first equation. The Boussinesq approximation therefore is only applicable in the regime of small Atwood number $A\ll 1$. 

In the present paper we consider the inviscid and indiffusive Boussinesq system \eqref{eq:bou}. More generally one can also add different sorts of diffusion terms in the momentum balance and/or the mass balance. In dimension $2$ global well-posedness results of sufficiently regular solutions and for different types of diffusion terms have been established in \cite{Chae,Danchin-Paicu,Hmidi,Hou-Li}, while finite time singularity formation for equation \eqref{eq:bou}, i.e. without any diffusive terms, has recently been shown in \cite{Elgindi}.

Local well-posedness statements for \eqref{eq:bou} considered in different sufficiently regular classes can be found in \cite{Chae-Nam,Danchin,Elgindi}. Note however that the horizontal interface \eqref{eq:initial_data} does not belong to these classes. On the other hand if one adds the diffusion terms $-\nu\Delta v$, $\nu>0$ and $-\mu\Delta \rho$, $\mu>0$ to the equations, local well-posedness has also been established for $L^p$ initial data \cite{Cannon-DiBenedetto}.

Contrary to local well-posedness the articles \cite{Bronzi,Chiodaroli} address the question of non-uniqueness of solutions. More precisely, \cite{Bronzi} shows the existence of wild solutions for \eqref{eq:bou} considered with $g=0$, i.e. for the homogeneous Euler equations augmented by a transport equation for a passive tracer. In \cite{Chiodaroli} the effect of the Coriolis force and a diffusion term in the continuity equation is added to \eqref{eq:bou} and non-uniqueness of weak solutions to given initial data is proven. Moreover, the non-uniqueness of admissible weak solutions is also shown in \cite{Chiodaroli} by construction of a suitable initial velocity field $v_0$ to a given initial density $\rho_0\in L^\infty(\Omega)\cap \cC^2(\Omega)$. The non-uniqueness results \cite{Bronzi,Chiodaroli} both rely on the method of convex integration introduced by De Lellis and Sz\'ekelyhidi to the context of fluid dynamics \cite{DeL-Sz-Annals,DeL-Sz-Adm}.

\subsection{Heuristic outline of results}
In this article we also address the inviscid Boussinesq system by means of convex integration, but our main goal here, as in \cite{GKSz} for the Euler equations \eqref{eq:euler_equations}, is to investigate nonlinear instability aspects of the Rayleigh-Taylor configuration \eqref{eq:initial_data} by providing the existence of solutions to the problem \eqref{eq:bou}, \eqref{eq:boundary_condition}, \eqref{eq:initial_data} that reflect a turbulent mixing. The oscillatory behaviour of solutions obtained by convex integration has also been utilized as an instance of turbulent mixing in the context of the Kelvin-Helmholtz instability \cite{Mengual-Sz-KH,Sz-KH} and the Muskat problem for the incompressible porous media equation \cite{Castro-Cordoba-Faraco,Castro-Faraco-Mengual,Cordoba-Faraco-Gancedo,Foerster-Sz,Hitruhin-Lindberg,Mengual,Noisette-Sz,Sz-Muskat}. Compared to \cite{Bronzi,Chiodaroli} this requires the explicit knowledge of the relaxation associated with \eqref{eq:bou}, which will be given here.

Using this relaxation we construct solutions to \eqref{eq:bou}, \eqref{eq:boundary_condition}, \eqref{eq:initial_data} considered on an $n$-dimensional quader $\Omega=(0,1)^{n-1}\times (-L,L)$, $n\geq 2$, which at any time $t>0$ with $\frac{1}{3}gAt^2\leq L$ are turbulently mixing in the space region $\mathscr{U}(t):=\set{x\in \Omega:\abs{x_n}<\frac{1}{3}gAt^2}$. The solutions all have a underlying self-similar subsolution in common, whose $\rho$-component is linear inside the mixing zone, i.e. $\rho_{sub}(x,t)=\frac{3x_n}{gAt^2}$ for $x\in \mathscr{U}(t)$.

The subsolution and hence the growth rate of the mixing zone $\frac{1}{3}gAt^2$ is selected uniquely and independently of the dimension by asking for maximal initial energy dissipation among all self-similar subsolutions. In particular the induced solutions are admissible with respect to the initial energy. The usage of maximal energy dissipation is motivated by the entropy rate admissibility criterion for hyperbolic conservation laws \cite{Dafermos} and has been investigated in \cite{Mengual-Sz-KH,Sz-KH} in the context of Euler subsolutions emanating from vortex-sheet initial data, as well as in \cite{Chiodaroli-Kreml,Feireisl} for compressible Euler systems. As in \cite{Mengual-Sz-KH} we focus here on maximal initial dissipation, i.e. the selection applies to small times.

Beyond small times, we show that the subsolutions can be extended in an admissible way past the time where the mixing zone hits the boundary. In fact we provide two possible extensions which in the long-time limit converge to two different states of the fluid: the fully mixed, isotropic state without any turbulent motion, this can be seen as the two different fluids forming now a single homogeneous fluid at rest, and the demixed stationary configuration where the two fluids are also at rest, but completly separated with the heavier fluid below the lighter. 
In other words this shows the existence of turbulent heteroclinic solutions emanating from the unstable interface configuration.

\subsection{Brief comparison to experiments}
There are numerous works addressing the Rayleigh-Taylor instability in different settings by means of experiments, numerical simulations and theoretical investigations for reduced models. For further reading we simply refer to the references given in the reviews \cite{Abarzhi,Boffetta,Zhou1,Zhou2}. At this point we only like to quickly compare our solutions for \eqref{eq:bou} with the results of the experiments carried out in \cite{Ramaprabhu-Andrews} at Atwood number $A\sim 7.5\cdot 10^{-4}$.  

First of all both the experiments and our solutions have a growth rate for the mixing zone like $\alpha gAt^2$. While the criterion of maximal initial energy dissipation selects $\alpha=\frac{1}{3}$ for our solutions, the actual constant observed in \cite{Ramaprabhu-Andrews} is $\alpha=0.07$. Concerning self-similarity it is written in \cite{Ramaprabhu-Andrews}: ``The saturation of $\alpha$ at late time to a constant value of 0.07 suggests that the flow
reaches self-similarity in these experiments.''
Another quantity we can easily compare is the ratio between dissipated energy and released potential energy. We will see in Section \ref{sec:subsolutions} (Remark \ref{rem:energy_ration}) that up to an arbitrary small error our solutions show a ratio of $\frac{D}{P_{rel.}}=\frac{1}{3}$, while the ratio measured in \cite{Ramaprabhu-Andrews} in dimension $3$ is $\frac{D}{P_{rel.}}=0.49$.

We conclude that the solutions selected by maximal initial energy dissipation stand the comparison to actual experiments on a qualitative level, but it remains the interesting question if the gap between $\alpha=\frac{1}{3}$ vs. $\alpha=0.07$ and $\frac{D}{P_{rel.}}=\frac{1}{3}$ vs. $\frac{D}{P_{rel.}}=0.49$ can be improved in the future. In particular, it would be an interesting open problem to see if the measured values correspond perhaps to the optimization of some other mathematical quantity (other than the initial energy dissipation).

\subsection{The role of the energy as a prescribed quantity}

As in some other previous works of convex integration in fluid mechanics (e.g. \cite{DeL-Sz-Annals,DeL-Sz-Adm,GKSz}), there is a microscopic quantity which one has to prescribe in a continuous way in order to implement the convex integration. For instance, in the case of the homogeneous density incompressible Euler equations, this quantity was the kinetic energy $\frac{1}{2}|v|^2$;
respectively in the case of the inhomogeneous incompressible Euler equations in \cite{GKSz} it was the quantity $\frac{1}{2}\rho|v+gt e_n|^2$, which corresponded to the kinetic energy of a transformed system,
and which can be seen as the kinetic energy of the original system plus a linear function of the momentum and the density.
In our case the appropriate prescribed quantity which gives rise to the subsolutions mentioned before is $\frac{1}{2}|v|^2+\frac{1}{3}\rho gAx_n$, i.e. the kinetic energy plus a fraction of the potential energy of the system. 

In fact, 
both our convex integration strategy and the construction of our subsolutions from Section \ref{sec:subsolutions}
can be carried out while prescribing the quantity $\frac{1}{2}|v|^2+\epsilon \rho gAx_n$, for any $\epsilon\in[0,1]$. However, setting for instance $\epsilon=1$, i.e. prescribing the total energy of the system, leads to solutions which are not admissible. The value  $\epsilon=\frac{1}{3}$
is obtained by the process of maximizing the initial energy dissipation.

For further details, see the more detailed discussion in Sections \ref{sec:statement_relaxation}, \ref{sec:choices_concerning_the_energy} and the construction in Section \ref{sec:subsolutions}.

\subsection{Comparison to \texorpdfstring{\cite{GKSz}}{the inhomogeneous Euler equation}}

In \cite{GKSz} the authors together with L. Sz\'ekelyhidi have addressed the Rayleigh-Taylor instability for the inhomogeneous incompressible Euler equations \eqref{eq:euler_equations}. In the aforementioned paper we obtained the existence of admissible turbulently mixing solutions for a sufficiently high density ratio $\frac{\rho_+}{\rho_-}$, which translates to the Atwood number $A$ being in the so-called ``ultra high'' range, $A\geq 0.845$, i.e. far away from the Boussinesq range.

The proof there also relied on the explicit computation of the relaxation and convex integration within the Tartar framework. The computations for the convex hull in Section \ref{sec:convex_hull} resemble the computations done in \cite{GKSz}. While in the aforementioned paper a transformation of \eqref{eq:euler_equations} onto an accelerated domain could be used in order to fit the system exactly into the Tartar framework (by which we mean that the gravity term in the momentum equation disappeared), here we can no longer use this transformation due to the Boussinesq approximation, and instead construct localized plane waves for an inhomogeneous linear system, see Section \ref{sec:waves}.

However, the main difference to \cite{GKSz} is the way subsolutions are constructed and selected. In \cite{GKSz} we reduced the relaxed system to a conservation law, which has some similarities to the conservation law appearing in \cite{Otto} in a different approach to relax  the incompressible porous media equation, and picked the unique entropy solution as our subsolution density profile. The subsolution found this way is self-similar and admissible for big enough $A$.
Here instead, we consider the whole zoo of self-similar subsolutions and set up a variational problem whose unique minimizer corresponds to the subsolution maximizing the initial energy dissipation.

As illustrated in the previous subsection, contrary to \cite{GKSz} strictly speaking we do not provide one relaxation of the nonlinear system, but several relaxations, which differ in the amount of allowed turbulent behaviour in the local energy density, see Section \ref{sec:choices_concerning_the_energy}. 

The discussion of possible long time limits in Section \ref{sec:beyond_small_times} for the subsolution is not part of \cite{GKSz}.

\subsection{Outline of the paper}
In Section \ref{sec:statements} we formulate our results concerning the relaxation of \eqref{eq:bou} and the investigation of subsolutions in a precise way. Section \ref{sec:tartar} contains the steps needed to carry out convex integration in the Tartar framework and Section \ref{sec:subsolutions} contains the construction and selection of self-similar subsolutions.

\section{Statement of results}\label{sec:statements}
Let $\Omega\subset \R^n$ be a bounded domain and $T>0$. Our notion of solution to system \eqref{eq:bou}, \eqref{eq:boundary_condition} on $\Omega\times[0,T)$ for general initial data $\rho(\cdot,0)=\rho_0$, $v(\cdot,0)=v_0$ with
\begin{align}\label{eq:general_initial_data}
\rho_0\in L^\infty(\Omega),\quad \rho_0\in\{\pm1\}~a.e., \quad v_0\in L^2(\Omega;\R^n),\quad \divv v_0=0\text{ weakly},
\end{align}
is as follows.
\begin{definition}[Weak solutions]\label{def:weaksols}
Let $(\rho_0,v_0)$ be as in \eqref{eq:general_initial_data}.
We say that $(\rho,v)\in L^\infty(\Omega\times(0,T))\times L^2(\Omega\times(0,T);\R^n)$ is a weak solution to \eqref{eq:bou}, \eqref{eq:boundary_condition} with initial data $(\rho_0,v_0)$ if for any test functions $\Phi\in C^\infty_c(\Omega\times[0,T);\mathbb{R}^n) $, $\Psi\in C^\infty_c(\overline{\Omega}\times[0,T)) $, such that $\Phi$ is divergence-free, we have
\begin{align*}
\int_0^T\int_{\Omega} \left[v \cdot \partial_t \Phi + \langle v\otimes v ,\nabla\Phi\rangle - gA \rho \Phi_n \right] \ dx \ dt +\int_{\Omega}v_0 (x)\cdot \Phi(x,0)\ dx=0,\\
\int_0^T\int_\Omega v\cdot\nabla \Psi\:dx\:dt=0,\\
\int_0^T\int_{\Omega} \left[\rho \partial_t \Psi + \rho v\cdot\nabla\Psi \right] \ dx \ dt +\int_{\Omega} \rho_0 (x) \Psi(x,0)\ dx=0,
\end{align*}
and if $\rho(x,t)\in\set{\pm 1}$ for a.e. $(x,t)\in\Omega\times(0,T)$.
\end{definition}

Observe that the definition of $v$ being weakly divergence-free includes the no-flux boundary condition. Moreover, for a smooth vectorfield $v$ the condition $\rho\in\{\pm 1\}$ automatically holds true, because then the density is transported along the flow associated with $v$, but for weaker notions of solutions this property in general is lost, see for example \cite{Modena}. 
Furthermore, a (in general distributional) pressure $p$ can be recovered from $(\rho,v)$ as in the case of the homogeneous Euler equations, see \cite{Temam}.

The local energy density function $\mathcal E\in L^1(\Omega\times(0,T))$ associate with a weak solution $(\rho,v)$ reads
\begin{align}\label{eq:local_energy_density}
\mathcal E(x,t):=\frac{1}{2}\abs{v(x,t)}^2+\rho(x,t)gAx_n.
\end{align} 
Indeed, testing a sufficiently smooth solution of \eqref{eq:bou} with $v$ one sees that the total energy $\int_{\Omega}\mathcal E(x,t)\:dx$ is independent of $t$. However, this property in general fails to be true for weak solutions of Euler type equations, see \cite{DeL-Sz-Annals} for Euler and \cite{Chiodaroli} for the Boussinesq system. In order to rule out unphysical solutions due to an increase in energy and in view of the weak-strong uniqueness principle in various equations in fluid dynamics \cite{Wiedemann} we require the solutions to satisfy the following admissibility condition.

\begin{definition}[Admissible weak solutions]\label{def:admissibility_of_weak_solutions}
A weak solution $(\rho,v)$ in the sense of Definition \ref{def:weaksols} is called admissible provided it satisfies the weak energy inequality
\[
\int_\Omega \mathcal E(x,t)\:dx\leq \int_\Omega \frac{1}{2}\abs{v_0(x)}^2+\rho_0(x)gAx_n\:dx\text{ for a.e. }t\in(0,T).
\]
\end{definition}

\subsection{The relaxation}\label{sec:statement_relaxation}
Next we will reformulate equation \eqref{eq:bou} as a differential inclusion and state its relaxation.
Let $\cS^{n\times n}$ be the set of all symmetric $n\times n$ matrices, $\cS_0^{n\times n}\subset \cS^{n\times n}$ the subset of matrices with vanishing trace and $\id\in \cS^{n\times n}$ be the identity matrix. We also write $\lamax(S),\lamin(S)$ for the maximal, minimal resp., eigenvalue of $S\in\cS^{n \times n}$, and the trace free part of $S$ is denoted by $S^\circ:=S-\frac{1}{n}\tr(S)\id$.

Consider on $\Omega\times (0,T)$ the linear system
\begin{align}\label{eq:subsolution_system_sec2}
\begin{split}
\partial_tv+\divv \sigma +\nabla p&=-\rho gA e_n,\\
\divv v&=0,\\
\partial_t\rho+\divv m&=0,
\end{split}
\end{align}
complemented with the boundary conditions
\begin{align}\label{eq:subsolution_boundary_data_sec2}
v\cdot\nu=0,\quad m\cdot\nu=0\quad \text{on }\partial\Omega\times(0,T),
\end{align}
for $z:=(\rho,v,m,\sigma,p)$ taking values in $Z:=\R\times\R^n\times\R^n\times\cS_0^{n\times n}\times\R$, and define
\begin{align}\label{eq:nonlinear_constraints_sec2}
K_{(x,t)}:=\set{z\in Z:\rho\in\{\pm 1\},~m=\rho v,~v\otimes v-\sigma=e(x,t)[\rho]\id}
\end{align}
for a given function $e:\Omega\times (0,T)\times \R\rightarrow\R$, $(x,t,r)\mapsto e(x,t)[r]$, which is affine linear in $r$. A brief discussion on possible choices of $e$ and some general constraints can be found in Section \ref{sec:choices_concerning_the_energy} below.

Now if $z:\Omega\times (0,T)\rightarrow Z$ is a weak solution of \eqref{eq:subsolution_system_sec2},  \eqref{eq:subsolution_boundary_data_sec2} to some initial data $(\rho_0,v_0)$ as in \eqref{eq:general_initial_data}, see Definition  \ref{def:subsolEuler} below for the precise definition, and if for almost every $(x,t)\in\Omega\times(0,T)$ there holds $z(x,t)\in K_{(x,t)}$, then $(\rho,v)$ defines a solution to the original equation \eqref{eq:bou} in the sense of Definition \ref{def:weaksols} for the same initial data and with energy density function given by
\[
\cE(x,t)=\frac{n}{2}e(x,t)[\rho(x,t)]+\rho(x,t)gAx_n.
\]
Conversely, if $(\rho,v)$ with associated pressure $p$ is a weak solution in the sense of Definition \ref{def:weaksols}, then 
$z=\left(\rho,v,\rho v,(v\otimes v)^\circ,p+\frac{1}{n}\abs{v}^2\right)$ is a weak solution of \eqref{eq:subsolution_system_sec2}, \eqref{eq:subsolution_boundary_data_sec2} and $z$ pointwise a.e. takes values in the set $K_{(x,t)}$ defined with respect to the function $e(x,t)[r]=\frac{1}{n}\abs{v(x,t)}^2$.

For the relaxation of \eqref{eq:subsolution_system_sec2}, \eqref{eq:nonlinear_constraints_sec2} let $Z_0:=\set{z\in Z:\rho\in(-1,1)}$, as well as $T_+,T_-,Q:Z_0\rightarrow \R$, $M:Z_0\rightarrow \cS^{n \times n}$,
\begin{gather}
\begin{gathered}\label{eq:definition_of_functions_for_hull}
M(z)=\frac{v\otimes v-\rho(m\otimes v+v\otimes m)+m\otimes m}{1-\rho^2}-\sigma,\\
Q(z)=\lamax(M(z)),\quad T_{\pm}(z)=\frac{\abs{m\pm v}^2}{n(\rho\pm 1)^2},
\end{gathered}
\end{gather}
and define for $(x,t)\in\Omega\times(0,T)$ the open set
\begin{equation}\label{eq:definition_of_U_sec2}
U_{(x,t)}:=\set{z\in Z:\rho\in(-1,1),~T_\pm(z)<e(x,t)[\pm 1],~Q(z)<e(x,t)[\rho]}.
\end{equation}
In the course of the article we will show that $U_{(x,t)}$ is the interior of the convex hull of $K_{(x,t)}$. That in particular means that if $(\rho_k,v_k)_{k\in\N}$ is a sequence of weak solutions with $v_k\in L^\infty(\Omega\times(0,T);\R^n)$ and such that the following convergences hold true $(\rho_k,v_k,\rho_kv_k,(v_k\otimes v_k)^\circ)\overset{*}{\rightharpoonup}(\rho,v,m,\sigma)$ in $L^\infty(\Omega\times(0,T);\R\times\R^n\times\R^n\times \cS_0^{n\times n})$ and $\frac{1}{n}\abs{v_k}^2\rightarrow e$ in $L^\infty(\Omega\times (0,T))$, then there exists a pressure $p$, such that $(\rho,v,m,\sigma,p)$ is a weak solution of \eqref{eq:subsolution_system_sec2}, while pointwise a.e. taking values in $\overline{U}_{(x,t)}$, where $U_{(x,t)}$ is defined with respect $e$.

With the help of the linear system \eqref{eq:subsolution_system_sec2} and the sets \eqref{eq:definition_of_U_sec2} we are ready to formulate the notion of subsolutions to \eqref{eq:bou}, as well as our general convex integration result. Doing this the following projection turns out to be convenient:
for $z=(\rho,v,m,\sigma,p)\in Z$ let
\begin{align}\label{eq:projection}
\pi(z):=(\rho,v,m,\sigma)\in \R\times\R^n\times\R^n\times\mathcal S_0^{n \times n}.
\end{align}
\begin{definition}[Subsolutions]\label{def:subsolEuler}
Let $e:\Omega\times(0,T)\times[-1,1]\rightarrow \R$ be bounded and affine linear in the last component.
We say that $z=(\rho,v,m,\sigma,p):\Omega\times(0,T)\to Z$ is a subsolution of \eqref{eq:bou} associated with $e$ and initial data $(\rho_0,v_0)$ as in \eqref{eq:general_initial_data} if and only if $\pi(z)\in L^\infty(\Omega\times(0,T);\pi(Z))$, $p$ is a distribution, 
$z$ solves \eqref{eq:subsolution_system_sec2}, \eqref{eq:subsolution_boundary_data_sec2} in the sense that $v$ is weakly divergence-free (as in Definition \ref{def:weaksols}), 
\begin{align*}
\int_0^T\int_{\Omega} \left[v \cdot \partial_t \Phi + \langle \sigma ,\nabla\Phi\rangle - gA \rho \Phi_n \right] \ dx \ dt +\int_{\Omega}v_0 (x)\cdot \Phi(x,0)\ dx=0,\\
\int_0^T\int_{\Omega} \left[\rho \partial_t \Psi + m\cdot\nabla\Psi \right] \ dx \ dt +\int_{\Omega} \rho_0 (x) \Psi(x,0)\ dx=0,
\end{align*}
for any test functions $\Phi\in C^\infty_c(\Omega\times[0,T);\mathbb{R}^2)$, $\divv \Phi =0$, $\Psi\in C^\infty_c(\overline{\Omega}\times[0,T))$,   
and if there exists an open set $\mathscr{U}\subset \Omega\times(0,T)$, such that the two  restricted maps $\mathscr{U}\ni(x,t)\mapsto \pi(z(x,t))\in\pi(Z)$ and $\mathscr{U}\times\R\ni (x,t,r)\mapsto e(x,t)[r]\in\R$ are continuous, and if there holds 
$
z(x,t)\in U_{(x,t)}$ for all $(x,t)\in\mathscr{U}$, as well as $z(x,t)\in K_{(x,t)}$  for a.e. $(x,t)\in\Omega\times(0,T)\setminus\mathscr{U}$. The open set $\mathscr{U}$ is called the mixing zone of $z$, and in analogy to solutions we call the subsolution admissible provided 
\begin{align}\label{eq:subsolenergy}
\mathcal E_{sub}(x,t):=\frac{n}{2}e(x,t)[\rho(x,t)]+\rho(x,t) gAx_n\end{align}
satisfies
\begin{align}\label{eq:weakadm}
\int_\Omega \mathcal E_{sub}(x,t)\, dx\leq \int_\Omega \frac{1}{2}\abs{v_0(x)}^2+\rho_0(x)gAx_n\,dx \text{ for a.e. }t\in(0,T).
\end{align}
\end{definition}

Before formulating our convex integration theorem we like to point out the following observation, which follows from Lemma 8 in \cite{DeL-Sz-Adm}. 
\begin{remark}\label{rem:regulartiy_of_rho}
Without loss of generality the $\rho$-component of any subsolution or solution is contained in $\cC^0([0,T];L^2_w(\Omega))$. That is for any $w\in L^2(\Omega)$ the function $[0,T]\ni t\mapsto \int_\Omega \rho(x,t)w(x)\:dx\in\R$ is continuous. 
\end{remark}
More precisely, \cite[Lemma 8]{DeL-Sz-Adm} gives $\rho\in\cC^0((0,T);L^2_w(\Omega))$, but looking into the proof one sees that the  functions in \cite[equation (90)]{DeL-Sz-Adm} can be uniquely extended to $\cC^0([0,T])$.

Observe also that outside the mixing zone $\mathscr{U}$ the components $(\rho,v)$ of a subsolution $z$ already solve the Euler-Boussinesq equation \eqref{eq:bou}.

\begin{theorem}\label{thm:main2} Let $z=(\rho,v,m,\sigma,p)$ be a subsolution associated with $e$ and initial data $(\rho_0,v_0)$ satisfying \eqref{eq:general_initial_data}, where  $e:\Omega\times(0,T)\times[-1,1]\rightarrow\R$ is given by
\[
e(x,t)[r]=e_0(x,t)+re_1(x,t)
\]
with $e_0\in L^\infty(\Omega\times(0,T))$, $e_1\in L^\infty(\Omega\times (0,T))\cap \cC^0([0,T];L^2(\Omega))$.  Then for any error function $\delta:[0,T]\rightarrow \R$, $\delta(0)=0$, $\delta(t)>0$, $t>0$ there exist infinitely many weak solutions $(\rho_{sol},v_{sol})$ of \eqref{eq:bou}, \eqref{eq:boundary_condition} with initial data $(\rho_0,v_0)$ having the properties
\begin{enumerate}[a)]
\item $(\rho_{sol},v_{sol})=(\rho,v)$ a.e. on $\Omega\times(0,T)\setminus\mathscr{U}$, 
\item the local energy density defined in \eqref{eq:local_energy_density} for a.e. $(x,t)\in\Omega\times (0,T)$ is given by 
\[
\cE_{sol}(x,t)=\frac{n}{2}e(x,t)[\rho_{sol}(x,t)]+\rho_{sol}(x,t)gAx_n,
\]
\item for any $t\in[0,T]$ there holds 
\[
\abs{\int_\Omega \left(\frac{n}{2}e_1(x,t)+gAx_n\right)(\rho(x,t)-\rho_{sol}(x,t)) \:dx}<\delta(t),
\]
\item for any $t\in (0,T)$ and any open ball $B\subset\Omega$ with $B\times \{t\}\subset\mathscr{U}$ there holds
\[
\int_B (1-\rho_{sol}(x,t))\:dx\int_B (1+\rho_{sol}(x,t))\:dx> 0.
\]
\end{enumerate}
Moreover, among these solutions one can find a sequence $(\rho_k,v_k)$, $k\in \N$, such that $\rho_k\rightarrow \rho$ in $\cC^0([0,T];L^2_w(\Omega))$ and $v_k\rightharpoonup v$ in $L^2(\Omega\times (0,T))$.
\end{theorem}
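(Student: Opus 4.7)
The plan is to prove Theorem \ref{thm:main2} through the standard Tartar framework of convex integration, exploiting the fact (established in Sections \ref{sec:tartar}--\ref{sec:convex_hull}) that $U_{(x,t)}$ is the interior of the convex hull of $K_{(x,t)}$. I would begin by fixing the ambient space: let $X_0$ denote the set of maps $\tilde z=(\tilde\rho,\tilde v,\tilde m,\tilde\sigma,\tilde p):\Omega\times(0,T)\to Z$ which equal $z$ a.e.\ outside the mixing zone $\mathscr U$, solve the linear system \eqref{eq:subsolution_system_sec2}, \eqref{eq:subsolution_boundary_data_sec2} with the same initial data $(\rho_0,v_0)$, are bounded uniformly in $L^\infty$ on the compact set $\overline{\mathscr U}$, and satisfy $\tilde z(x,t)\in U_{(x,t)}$ continuously on $\mathscr U$. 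Endow $X_0$ with the metric of weak-$*$ convergence of $\pi(\tilde z)$ (which is metrizable on the bounded set under consideration) and let $X$ be its closure. The original subsolution $z$ belongs (after an easy mollification in $\mathscr U$) to $X_0$, so $X$ is nonempty and compact.

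The heart of the argument is the perturbation property: for every $\tilde z\in X_0$ and every point $(x_0,t_0)\in\mathscr U$ with $\tilde z(x_0,t_0)\notin K_{(x_0,t_0)}$ there exists an arbitrarily small (in the weak topology) compactly supported perturbation $\bar z$ such that $\tilde z+\bar z\in X_0$ and
\[
\int_{\mathscr U}\tfrac{1}{n}|\tilde v+\bar v|^2\,dx\,dt-\int_{\mathscr U}\tfrac{1}{n}|\tilde v|^2\,dx\,dt\geq c\,\bigl(\dist(\tilde z(x_0,t_0),K_{(x_0,t_0)})\bigr)^2,
\]
with a uniform constant $c>0$. These perturbations are superpositions of the localized plane waves produced in Section \ref{sec:waves} for the inhomogeneous linear system; their $\Lambda$-directionality is exactly the information encoded in the hull computation of Section \ref{sec:convex_hull}. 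Combined with the identity $e(x,t)[\tilde\rho(x,t)]-\frac{1}{n}|\tilde v(x,t)|^2\geq 0$ on $\mathscr U$, with equality iff $\tilde z(x,t)\in K_{(x,t)}$ (which follows from $Q(\tilde z)<e[\tilde\rho]$ and a direct algebraic check), this perturbation estimate closes the Tartar machinery.

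Now consider the sequence of functionals $I_j(\tilde z):=\int_{\mathscr U}\varphi_j(x,t)\,\frac{1}{n}|\tilde v(x,t)|^2\,dx\,dt$ for a countable family $\{\varphi_j\}$ of nonnegative cutoffs whose supports exhaust $\mathscr U$. Each $I_j$ is weak-$*$ lower semicontinuous on $X$ and bounded, hence Baire--1, so by the Baire category theorem its set of continuity points is residual in $X$. At any point $\tilde z$ of simultaneous continuity of all $I_j$, the perturbation property forces $\tilde z(x,t)\in K_{(x,t)}$ for a.e.\ $(x,t)\in\mathscr U$: otherwise one could find a sequence of perturbations with $\tilde v$ converging weakly back to itself but $I_j$ strictly increased by a definite amount, contradicting continuity. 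By the equivalence stated after \eqref{eq:nonlinear_constraints_sec2}, such $\tilde z$ yields a weak solution $(\rho_{sol},v_{sol})$ of \eqref{eq:bou} satisfying (a) and (b). The sequential statement $\rho_k\to\rho$ in $\cC^0([0,T];L^2_w)$ (using Remark \ref{rem:regulartiy_of_rho}) and $v_k\rightharpoonup v$ in $L^2$ follows by choosing any sequence in $X_0$ approximating $z$ and applying the residual density of solutions.

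Properties (c) and (d) are enforced by enlarging the countable family of functionals used in the Baire argument. For (c), for each rational $t\in(0,T)$ the map $\tilde z\mapsto\int_\Omega\bigl(\frac{n}{2}e_1(x,t)+gAx_n\bigr)\tilde\rho(x,t)\,dx$ is weak-$*$ continuous on $X$ thanks to the $\cC^0([0,T];L^2(\Omega))$ regularity of $e_1$ and Remark \ref{rem:regulartiy_of_rho}; restricting the Baire argument to the closed subset of $X$ where this integral is within $\delta(t)/2$ of its value at $z$ (nonempty since $z$ itself belongs) and taking $\delta(t)$-cutoffs at a countable dense set of $t$ gives the bound for all $t$ by continuity. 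For (d), one adds to the countable family the functionals $\tilde z\mapsto\int_B(1\pm\tilde\rho(x,t))\,dx$ for a countable basis of balls $B\times\{t\}\subset\mathscr U$; since at any resulting solution $\tilde\rho\in\{\pm 1\}$ a.e., strict positivity of both factors on such balls is exactly the mixing required. The main technical obstacle in this plan, and the one singled out in Section \ref{sec:waves}, is the first step: producing compactly supported plane-wave solutions of the \emph{inhomogeneous} linear system with the gravitational forcing $-\rho gAe_n$ present, since unlike in \cite{GKSz} the Boussinesq approximation prevents removing gravity by passing to an accelerated frame.
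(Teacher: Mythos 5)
Your overall architecture matches the paper's: set up a space of subsolutions, run a Baire--category argument against the perturbation property supplied by the plane waves of Section \ref{sec:waves} and the hull computation of Section \ref{sec:convex_hull}. However, there is one genuine gap in the topology and one in the treatment of (d) that together prevent the argument from closing as written.

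The main problem is your choice of metric. You topologize $X_0$ by weak-$*$ convergence of $\pi(\tilde z)$ in $L^\infty(\Omega\times(0,T))$. Under that topology, the pointwise-in-time functional
\[
\tilde z\longmapsto\int_\Omega\Bigl(\tfrac{n}{2}e_1(x,t)+gAx_n\Bigr)\tilde\rho(x,t)\,dx
\]
is \emph{not} continuous: weak-$*$ convergence of $\tilde\rho_k$ on $\Omega\times(0,T)$ gives no control on time slices, and Remark~\ref{rem:regulartiy_of_rho} only says that each individual $\tilde\rho_k$ lies in $\cC^0([0,T];L^2_w(\Omega))$, not that the sequence converges in that space. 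So your device for property (c) --- restricting to the closed sublevel set for rational $t$ and extending by continuity --- does not go through, and the ``Moreover'' clause ($\rho_k\to\rho$ in $\cC^0([0,T];L^2_w(\Omega))$) is also out of reach. The paper resolves this by building a strictly stronger metric $d_X$ that takes the supremum over $t\in[0,T]$ of a metric for weak $L^2(\Omega)$-convergence, and then verifying (via the cylinder refinement of the plane waves in Remark~\ref{rem:slightly_stronger_convergence_of_plane_waves}) that the perturbation sequences converge in this stronger topology as well. Additionally, even if you had the right topology, merely intersecting $X$ with the closed sets $\{|\int(\cdots)(\hat\rho-\tilde\rho)|\le\delta(t)/2\}$ does not automatically preserve the perturbation property on the restricted space: the plane-wave perturbations might leave it. The paper builds the constraint with a safety factor $C(z)\in(0,1)$ directly into the definition of $X_0$ (condition~\eqref{eq:X_0_bonus_property}) and shows in the Perturbation Lemma~\ref{lem:perturbation_lemma} that the perturbed subsolutions eventually satisfy the constraint with a slightly larger constant $C(z)+C'(z)<1$; that is an essential step your proposal omits.

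The treatment of (d) is also too quick. Adding the maps $\tilde z\mapsto\int_B(1\pm\tilde\rho(x,t))\,dx$ to your countable Baire family and passing to a common continuity point does not imply that both integrals are strictly positive at the limiting solution: continuity of a functional at a point says nothing about its sign there. The mixing property requires a dedicated argument --- roughly, that the convex-integration perturbations genuinely create oscillations of both signs in every space-time ball inside $\mathscr U$, which the paper obtains by invoking \cite[Corollary~3.1]{Castro-Faraco-Mengual}. Finally, a smaller point: the functionals the paper uses are $I(\pi(z))=\int|\pi(z)|^2$ and $J(\pi(z))=\int\dist(\pi(z),\pi(K_{(x,t)}))^2$ rather than weighted versions of $\frac1n|v|^2$; your claimed identity ``$e[\tilde\rho]-\frac1n|\tilde v|^2\ge0$ with equality iff $\tilde z\in K$'' is not what drives the argument (and as stated is not an immediate consequence of the defining inequalities $Q(z)<e[\rho]$, $T_\pm(z)<e[\pm1]$), so you would need to justify it or switch to the distance functional.
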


\begin{remark}[Admissibility]\label{rem:admissibility}
Observe that by Remark \ref{rem:regulartiy_of_rho} and the assumption on $e_1$ the integral on the left-hand side in Thm. \ref{thm:main2} c) defines a continuous function on $[0,T]$. Moreover, for a.e. $t\in(0,T)$ the energy difference between the subsolution and the solutions is precisely given by this term, i.e.
\[
\int_\Omega\cE_{sub}(x,t)-\cE_{sol}(x,t)\:dx=\int_\Omega\left(\frac{n}{2}e_1(x,t)+gAx_n\right)(\rho(x,t)-\rho_{sol}(x,t))\:dx
\]
for a.e. $t\in(0,T)$. In particular, if the subsolution is admissible with strict inequality in \eqref{eq:weakadm} for a.e. $t\in (0,T)$, then by a suitable choice of error function $\delta(t)$ one sees that property c) implies the admissibility of the induced solutions $(\rho_{sol},v_{sol})$ in the sense of Definition \ref{def:admissibility_of_weak_solutions}. 
\end{remark}

\begin{remark}[Mixing]
The convergence $\rho_k\rightarrow \rho$ in $\cC^0([0,T];L^2_w(\Omega))$ means that for any $w\in L^2(\Omega)$ there holds 
\[
\sup_{t\in[0,T]}\abs{\int_{\Omega}(\rho_k(x,t)-\rho(x,t))w(x)\:dx}\rightarrow 0.
\]
In that sense at every $t\in[0,T]$ the subsolution density $\rho(\cdot,t)$ can be seen as a coarse grained or averaged density of the induced solutions $\rho_{sol}(\cdot,t)$, whose turbulent nature is illustrated by means of the mixing at every time slice property d).
\end{remark}

The proof of Theorem \ref{thm:main2} will be carried out in Section \ref{sec:tartar} and is based on the convex integration methods introduced by De Lellis and Sz\'ekelyhidi in \cite{DeL-Sz-Annals,DeL-Sz-Adm} and its refinements in \cite{Castro-Faraco-Mengual,Crippa}. In particular looking at \cite{Castro-Faraco-Mengual} one could in addition also add the ``linearly degraded macroscopic behaviour'' to the list of properties of the solutions in Theorem \ref{thm:main2}. Moreover, if one is interested in the notion of admissibility at every time, by which we mean that the inequality in Definition \ref{def:admissibility_of_weak_solutions} holds for all $t\in[0,T]$ instead of a.e. $t\in(0,T)$, one can use the convex integration strategy from \cite{Castro-Faraco-Mengual,DeL-Sz-Adm} based on a ``shifted grid'', which is not used here.

\subsection{Choices for \texorpdfstring{$e(x,t)[r]$}{the energy}}\label{sec:choices_concerning_the_energy}
In order to have inside the mixing zone $\mathscr{U}$ of a subsolution a non-empty interior of the convex hull $U_{(x,t)}$ we need 
\begin{equation}\label{eq:condition_on_e}
e(x,t)[\pm 1]>0,\quad\text{for all }(x,t)\in\mathscr{U}.
\end{equation}
In general $e(x,t)[r]$ has to be non-negative a.e., because this expression coincides up to a positive factor with the kinetic energy of the solutions.

Besides the above conditions one can a priori use for $e(x,t)[r]$ any function of the type
\[
e(x,t)=e_0(x,t)+e_1(x,t)r
\]
with $e_0,e_1$ continuous on $\mathscr{U}$, but in fact we will only consider such $e$ with 
\begin{equation}\label{eq:choice_of_e_1}
e_1(x,t)=-\varepsilon gAx_n,\quad \varepsilon\in\left[0,\frac{2}{n}\right].
\end{equation}
With this choice the solutions obtained by Theorem \ref{thm:main2} will have a kinetic energy a.e. given by
\[
\frac{1}{2}\abs{v_{sol}(x,t)}^2=\frac{n}{2}e_0(x,t)-\frac{n}{2}\varepsilon gA x_n\rho_{sol}(x,t).
\] 
This means that besides the continuous part $\frac{n}{2}e_0(x,t)$, which can be seen as a non turbulent or averaged part, the kinetic energy density of the solutions absorbs a certain fraction, given by $\frac{n}{2}\varepsilon\in[0,1]$, of the turbulent oscillations in the potential energy density $gAx_n\rho_{sol}(x,t)$. 

A priori also $\varepsilon$ can be a function depending on $(x,t)$, but we will mostly stick to constant $\varepsilon$, except for Section \ref{sec:beyond_small_times}.

\subsection{Subsolutions}\label{sec:statement_subsol}

Our second main result addresses the construction and selection of subsolutions associated with the initial data $\rho_0=\text{sgn}(x_n)$, $v_0\equiv 0$. We consider the problem on an $n$-dimensional box $\Omega=(0,1)^{n-1}\times(-L,L)$, $L>0$, $n\geq 2$ and focus on self-similar subsolutions. For the precise definition let $\cF$ denote the set of all $f\in \cC^1([-1,1])$ satisfying
\begin{gather}\label{eq:definition_of_set_F}
f(\pm 1)=\pm 1,~ f'(\pm 1)> 0,~ f(y)\in(-1,1),~ f(-y)=-f(y),~y\in(-1,1)
\end{gather}
and let $\cA$ denote the set of all $a\in \cC^2([0,T))$ with 
\begin{equation}\label{eq:definition_of_set_A}
a(0)=0,~ a(t)>0,~t\in(0,T).
\end{equation}
In Section \ref{sec:1Dsubsols} we will prove the following lemma.

\begin{lemma}\label{lem:1D_subsolutions}
Any triple $(f,a,\varepsilon)\in\cF\times\cA\times\left[0,\frac{2}{n}\right]$ gives rise to a continuous, piecewise $\cC^1$ subsolution $z$ with 
\[
\rho(x,t)=\begin{cases}
1,&x_n\geq a(t),\\
f\left(\frac{x_n}{a(t)}\right),&x_n\in(-a(t),a(t)),\\
-1,&x_n\leq -a(t),
\end{cases}
\]
$v\equiv 0$, $m_i\equiv 0$, $1\leq i\leq n-1$, as long as $a(t)\leq L$ and with $e$ having the form $e(x,t)[r]=e_0(x,t)-\varepsilon gAx_n r$.
\end{lemma}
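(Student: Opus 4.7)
I would proceed by explicitly constructing $z=(\rho,v,m,\sigma,p)$ from $(f,a,\varepsilon)$ via a one-dimensional ansatz in $x_n$, exploiting the freedom in the trace-free matrix $\sigma$ to simplify the eigenvalue $Q(z)$, and finally choosing a continuous $e_0$ so that the strict inequalities defining $U_{(x,t)}$ hold throughout the mixing zone $\mathscr{U}:=\{(x,t)\in\Omega\times(0,T):|x_n|<a(t)\}$.

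With $v\equiv 0$ and $m_i\equiv 0$ for $1\leq i\leq n-1$, the continuity equation reduces to $\partial_t\rho+\partial_{x_n}m_n=0$. Using $\partial_t\rho=-f'(x_n/a)\,x_n\dot a/a^2$ inside $\mathscr{U}$ together with the matching condition $m_n(-a(t),t)=0$ (from $m\equiv 0$ outside) yields the explicit formula
\[
m_n(x,t)=\dot a(t)\int_{-1}^{x_n/a(t)}y\,f'(y)\,dy.
\]
An integration by parts combined with $f(\pm 1)=\pm 1$ and the oddness of $f$ also gives $m_n(a(t),t)=0$, so $m_n$ extends continuously by zero across $\partial\mathscr{U}$, is piecewise $\mathcal{C}^1$, satisfies the no-flux condition $m\cdot\nu=0$ on $\partial\Omega$ (using $a(t)\leq L$ for the top and bottom faces), and makes the weak continuity equation hold on $\Omega\times(0,T)$ with $\rho_0=\sign(x_n)$ (since $a(0)=0$).

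Since $m=m_n e_n$ and $v=0$, one has $M(z)=(m_n^2/(1-\rho^2))\,e_n e_n^T-\sigma$. I choose $\sigma$ inside $\mathscr{U}$ so that $M(z)$ is a scalar multiple of $\mathrm{id}$, namely
\[
\sigma(x,t):=\frac{m_n(x,t)^2}{1-\rho(x,t)^2}\left(e_n e_n^T-\tfrac{1}{n}\mathrm{id}\right),
\]
and set $\sigma\equiv 0$ outside $\mathscr{U}$; this gives $Q(z)=m_n^2/(n(1-\rho^2))$. A Taylor expansion of $f$ near $\pm 1$ combined with the explicit formula for $m_n$ shows that $m_n^2/(1-\rho^2)$ has limit $0$ at $\partial\mathscr{U}$, so $\sigma$ is globally continuous and piecewise $\mathcal{C}^1$. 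Since $\sigma$ is diagonal and depends only on $(x_n,t)$, its divergence has only an $n$th component, and the momentum equation is solved pointwise by $p(x_n,t):=-\sigma_{nn}(x_n,t)-gA\int_0^{x_n}\rho(s,t)\,ds$, which in particular yields the weak formulation with divergence-free test fields.

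It remains to pick $e_0$. Outside $\mathscr{U}$, with $v=\sigma=m=0$, the condition $z\in K_{(x,t)}$ forces $e[\pm 1]=0$, hence $e_0(x,t)=\varepsilon gA|x_n|$ there. Inside $\mathscr{U}$ the conditions $T_\pm(z)<e[\pm 1]$ and $Q(z)<e[\rho]$ become three strict pointwise bounds of the form $e_0(x,t)>R_*(x,t)$, $*\in\{+,-,Q\}$, whose right-hand sides are continuous on $\mathscr{U}$ (since $\rho\in(-1,1)$ there) and bounded up to $\partial\mathscr{U}$ by the same Taylor analysis. Defining $e_0$ on $\mathscr{U}$ to be $\max(R_+,R_-,R_Q)$ plus any continuous strictly positive slack yields a bounded continuous $e_0$ realizing all three inequalities, so that $z(x,t)\in U_{(x,t)}$ on $\mathscr{U}$ and $z(x,t)\in K_{(x,t)}$ a.e.\ outside. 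The main technical point throughout is the boundary analysis at $x_n=\pm a(t)$: the hypothesis $f'(\pm 1)>0$ is precisely what guarantees that the a priori degenerate quantities $m_n^2/(1-\rho^2)$, $m_n^2/(\rho\mp 1)^2$ have finite limits, which is what makes $\sigma$ continuous across $\partial\mathscr{U}$, $\pi(z)$ globally continuous and piecewise $\mathcal{C}^1$, and $e_0$ bounded on $\mathscr{U}$.
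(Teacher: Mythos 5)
Your proof is correct and follows essentially the same construction as the paper: the same $m_n$ (your integral formula is exactly the paper's $\dot a(t)F(x_n/a(t))$), the same choice of $\sigma=(m\otimes m)^\circ/(1-\rho^2)$ making $M(z)$ a multiple of the identity, the same one-dimensional pressure, and the same boundary analysis using $f'(\pm 1)>0$. The only cosmetic difference is that you include $R_Q$ in the max when defining $e_0$; the paper instead observes that with this choice of $\sigma$, $Q(z)=\frac{1+\rho}{2}T_+(z)+\frac{1-\rho}{2}T_-(z)$, so the $Q$-inequality follows automatically from the two $T_\pm$-inequalities by convexity — your inclusion of $R_Q$ is therefore redundant but harmless.
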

We refer to these subsolutions as self-similar subsolutions. Considering solutions with $v\equiv 0$, $m_i\equiv 0$, $1\leq i\leq n-1$ and independent of $x_1,\ldots,x_{n-1}$ reflects the interpretation of the subsolution as an $(x_1,\ldots,x_{n-1})$-averaged solution. Moreover, we will see that the symmetry condition on $f$ is needed for the existence of self-similar subsolutions for the Boussinesq system. In contrast the subsolution constructed in \cite{GKSz} for the Euler system without Boussinesq approximation is also self-similar, but the profile $f$ is not symmetric.

Note that the associated mixing zone is given by $\mathscr{U}_a:=\set{(x,t):\abs{x_n}<a(t)}$. Note also that at this point the subsolutions are not necessarily admissible. 

In order to investigate the admissibility let $z=z_{f,a,\varepsilon}$ be a self-similar subsolution and define the function $\tilde{e}_{f,a,\varepsilon}:\Omega\times(0,T)\rightarrow\R$,
\begin{align*}
\tilde{e}_{f,a,\varepsilon}(x,t):=\inf&\left\{e_0(x,t):e_0\in L^\infty(\Omega\times(0,T))\cap\cC^0(\mathscr{U}_a),\right.\\
&\hspace{30pt}\left.z_{f,a,\varepsilon}\text{ is a subsolution w.r.t. }e(x,t)[r]=e_0(x,t)-\varepsilon gAx_n r\right\}.
\end{align*}
Hence by this definition, Theorem \ref{thm:main2} c) and Remark \ref{rem:admissibility} the subsolution $z_{f,a,\varepsilon}$ induces mixing solutions whose total energy $\int_\Omega\cE_{sol}(x,t)\:dx$ for a.e. $t\in(0,T)$ is arbitrarily close to
\begin{align}\label{eq:subste}
E_{f,a,\varepsilon}(t):=\int_\Omega\frac{n}{2}\tilde{e}_{f,a,\varepsilon}(x,t)+\left(1-\frac{n}{2}\varepsilon\right)gAx_n \rho_{f,a,\varepsilon}(x,t)\:dx.
\end{align}

Note that if $z_{f,a,\varepsilon}$ is admissible, then $E_{f,a,\varepsilon}(t)\leq E(0)$ for a.e. $t\in (0,T)$, where $E(0)=gAL^2$ is the initial energy associated with \eqref{eq:initial_data}. In order to evaluate the initial loss of energy define for $k=0,\ldots,4$ the functionals
\begin{align}\label{eq:Jk}
J_k(f,a,\varepsilon):=\lim_{t\rightarrow+\infty}\frac{E_{f,a,\varepsilon}(t)-E(0)}{t^k},
\end{align}
whenever the limits exist. We have the following small time selection of a self-similar subsolution.
\begin{theorem}\label{thm:selection_by_initial_dissipation}
For any $f\in\cF$, $a\in \cA$, $\varepsilon\in \left[0,\frac{2}{n}\right]$, such that $z_{f,a,\varepsilon}$ is admissible there holds $\dot{a}(0)=0$ and $J_k(f,a,\varepsilon)=0$, $k=0,1,2,3$. Moreover, among all admissible self-similar subsolutions the maximal initial dissipation rate
\[
\inf\set{J_4(f,a,\varepsilon):(f,a,\varepsilon)\in\cF\times\cA\times\left[0,\frac{2}{n}\right],~J_k(f,a,\varepsilon)=0\text{ for }k=0,1,2,3}
\]
is achieved for $f(y)=y$, $a(t)=\frac{1}{3}gAt^2+o(t^2)$, $\varepsilon=\frac{2}{3n}$. Up to the $o(t^2)$, the minimizer is unique.
\end{theorem}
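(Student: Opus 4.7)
The plan is to compute everything for self-similar subsolutions in closed form, use scaling to force $\dot a(0)=0$ and the vanishing of the lower $J_k$'s, and then minimize $J_4$ by means of a two-step Cauchy--Schwarz reduction. Starting from Lemma~\ref{lem:1D_subsolutions} with $v\equiv 0$ and $m_i\equiv 0$ for $i<n$, the continuity equation forces $m_n(x,t)=\dot a(t) h(y)$ with $y:=x_n/a(t)$ and $h(y):=yf(y)+F(1)-1-F(y)$, $F(y):=\int_0^y f$; one verifies $h(\pm 1)=0$, $h'(y)=yf'(y)$ and (since $f$ is odd) $h$ is even. Optimising the trace-free tensor $\sigma$ to minimise $\lambda_{\max}(M(z))$ yields $Q(z)=\dot a^2\cQ(y)/n$ with $\cQ(y):=h(y)^2/(1-f(y)^2)$, and $T_\pm(z)=\dot a^2 h^2/(n(1\pm f)^2)$. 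Inside the mixing zone the pointwise infimum of admissible $e_0$ equals $\max(e_A,e_B,e_C)$, where $e_A:=\dot a^2\cQ/n+\varepsilon gA x_n f$ and $e_{B/C}:=\dot a^2 h^2/(n(1\pm f)^2)\pm\varepsilon gAx_n$; outside it one has $\tilde e_0=\varepsilon gA|x_n|$. A direct integration using the symmetry of $f$ produces the compact formula
\[
E_{f,a,\varepsilon}(t)-E(0)=-gA(1-\beta c)\,a(t)^2+\tfrac{n}{2}\,a(t)\,I(t),
\]
with $c:=\int_{-1}^1 yf\,dy$, $\beta:=1-\tfrac{n\varepsilon}{2}$ and $I(t):=\int_{-1}^1\max(e_A,e_B,e_C)|_{x_n=a(t)y}\,dy$.

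Writing $a(t)\sim\alpha t^k$ as $t\to 0^+$, the two terms above are of orders $t^{2k}$ and $t^{3k-2}$ respectively, and at leading order $I(t)\sim \dot a^2/n\cdot\int_{-1}^1\cQ\,dy>0$, since $\cQ(0)=(F(1)-1)^2>0$ is a consequence of $|f|<1$ on $(-1,1)$. For $k<2$ the kinetic term $\tfrac n2 aI$ dominates with strictly positive sign, contradicting admissibility. Hence $k\ge 2$, i.e.\ $\dot a(0)=0$, and $E(t)-E(0)=O(t^4)$, whence $J_0=J_1=J_2=J_3=0$. For $a(t)=\alpha t^2+o(t^2)$ with $\alpha>0$ one reads off
\[
J_4=-gA(1-\beta c)\alpha^2+\tfrac{n\alpha}{2}\int_{-1}^1\max(\hat e_A,\hat e_B,\hat e_C)\,dy,
\]
with $\hat e_\bullet$ the pointwise limits of $e_\bullet/t^2$. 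Using the pointwise bound $\max\ge\hat e_A$ together with $\int yf\,dy=c$, a direct computation reveals a striking cancellation of the $\varepsilon$-dependence:
\[
J_4\ \ge\ -gA(1-c)\alpha^2+2\alpha^3\int_{-1}^1\cQ\,dy.
\]
Minimising the right-hand side in $\alpha>0$ at $\alpha_\ast=\frac{gA(1-c)}{3\int\cQ\,dy}$ yields
\[
J_4\ \ge\ -\frac{(gA)^3(1-c)^3}{27\bigl(\int_{-1}^1\cQ\,dy\bigr)^2}.
\]

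The proof is completed by showing $(1-c)^3/\bigl(\int_{-1}^1\cQ\,dy\bigr)^2\le \tfrac{1}{3}$ for every $f\in\cF$, with equality iff $f(y)=y$; combined with the previous step this gives $J_4\ge -(gA)^3/81$. Using $h'=yf'$ and integration by parts one obtains $\int_{-1}^1 h\,dy=-2(1-c)$, so a first Cauchy--Schwarz gives
\[
4(1-c)^2=\Bigl(\int_{-1}^1 h\,dy\Bigr)^2\ \le\ \int_{-1}^1\cQ\,dy\cdot\int_{-1}^1(1-f^2)\,dy,
\]
reducing the task to the bound $\bigl(\int_{-1}^1(1-f^2)\,dy\bigr)^2\le\tfrac{16(1-c)}{3}$. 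For monotone $f$ the substitution $u=f(y)$, $g:=f^{-1}$ gives the identities $1-c=\int_0^1 g^2\,du$ and $\int_{-1}^1(1-f^2)\,dy=4\int_0^1 ug\,du$, so a second Cauchy--Schwarz $(\int ug)^2\le\tfrac{1}{3}\int g^2$ delivers the estimate with equality exactly when $g(u)=u$, i.e.\ $f(y)=y$; the extension to general $f\in\cF$ is handled through the monotone rearrangement $\tilde f$ of $f|_{[0,1]}$ together with the Hardy--Littlewood inequality $\int_0^1 yf\le \int_0^1 y\tilde f$. Equality in the full chain forces $f(y)=y$; then $\alpha_\ast=gA/3$, and the condition $\max=\hat e_A$ making the pointwise bound tight selects the unique $\varepsilon=\tfrac{2}{3n}$, at which $\hat e_A\equiv\hat e_B\equiv\hat e_C$. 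Admissibility of the minimiser is immediate from $E(t)-E(0)=-\tfrac{gA}{9}a(t)^2<0$. \emph{The main obstacle is the second Cauchy--Schwarz, and in particular the careful reduction from the non-monotone case to the monotone one via rearrangement.}
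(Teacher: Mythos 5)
Your proposal reproduces the paper's strategy almost verbatim through the identification of $\dot a(0)=0$, the vanishing of $J_0,\dots,J_3$, the lower bound $J_4\ge -gA(1-c)\alpha^2+2\alpha^3\int_{-1}^1\cQ$ (which is the paper's $\tilde J(f,\ddot a(0))$ with $\ddot a(0)=2\alpha$, $I_1=\tfrac12\int\cQ$, $I_2=\tfrac12(1-c)$), the minimisation over $\alpha$, and the first Cauchy--Schwarz $4(1-c)^2\le\int\cQ\cdot\int(1-f^2)$. Up to that point you and the paper are doing the same thing with different bookkeeping (your $h$ is the paper's $F$, your $e_B,e_C$ are the paper's $G^\pm$ divided by $\tfrac n2 a$, and your $e_A$ is redundant since it is the $\tfrac{1\pm f}{2}$-convex combination of $e_B,e_C$). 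The genuine divergence is in the final step: where the paper establishes $\left(\int_0^1(1-f^2)\right)^2\le\frac43(1-c)$ by a variational analysis of the functional $\hat J(f)=\norm{f-\id}^2_{L^2(0,1)}-\frac34\ska{f-\id,f+\id}^2_{L^2(0,1)}$ (Lemma~\ref{lem:reduced_functional}: existence of a minimiser over $\overline{\cF}_0$, computation of the critical points of $\hat J$ on all of $L^2(0,1)$, and a scaling contradiction at the boundary), you instead substitute $u=f(y)$, integrate by parts, and apply a second Cauchy--Schwarz against $\int_0^1 u^2\,du=\frac13$. This is shorter and isolates the source of the constant $\tfrac13$ very cleanly.

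There is, however, a small gap in your reduction to the monotone case. Functions $f\in\cF$ need not be nonnegative on $(0,1)$: an odd $\cC^1$ function with $f(0)=0$, $f(1)=1$, $|f|<1$ on $(-1,1)$ may dip below zero on $(0,\delta)$. If $m:=\min_{[0,1]}f<0$, the increasing rearrangement $\tilde f$ of $f|_{[0,1]}$ maps onto $[m,1]$, so that $g:=\tilde f^{-1}$ lives on $[m,1]$ and the second Cauchy--Schwarz produces the weight $\int_m^1 u^2\,du=\frac{1-m^3}{3}>\frac13$, which is not good enough. The fix is cheap: rearrange $|f|\big|_{[0,1]}$ instead of $f|_{[0,1]}$. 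Since $\int_0^1(1-f^2)=\int_0^1(1-|f|^2)$ is rearrangement-invariant and $\int_0^1 yf\,dy\le\int_0^1 y|f|\,dy\le\int_0^1 y\,\widetilde{|f|}\,dy$ (pointwise domination plus Hardy--Littlewood), proving the inequality for the nondecreasing $[0,1]$-valued $\widetilde{|f|}$ gives it for $f$ with the correct constant, and equality still forces $f\ge0$, $f$ nondecreasing and $f=\id$. With this adjustment the proof is complete; I would just make the rearrangement of $|f|$ explicit, since that is exactly the place you flagged as the main obstacle and where a literal reading of ``the rearrangement $\tilde f$ of $f|_{[0,1]}$'' fails.
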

We will see that for $f(y)=y$, $a(t)=\frac{1}{3}gAt^2$ and $\varepsilon=\frac{2}{3n}$ there holds 
\begin{align*}
E_{f,a,\varepsilon}(t)-E(0)=-\frac{1}{81}g^3A^3t^4
\end{align*}
as long as $a(t)\leq L$, i.e. for all $t\in\left[0,\sqrt{\frac{3L}{gA}}\right]$.

Next we will formulate the two statements concerning the extension of the subsolution to all times.
We like to emphasize that for the extensions we no longer use a selection criterion, instead the constructions contain several choices and for now are only done to illustrate possible options for the long-time behaviour.

\begin{proposition}\label{prop:mix}
The minimizing subsolution from Theorem \ref{thm:selection_by_initial_dissipation} with $o(t^2)=0$ can be extended in an admissible manner to $\Omega\times(0,+\infty)$ such that it converges to the fully mixed, isotropic state $z\equiv 0$ as $t\to+\infty$ and such that also the associated kinetic energy $\frac{n}{2}e(x,t)[\rho(x,t)]$ converges to $0$.
\end{proposition}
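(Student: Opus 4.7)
The plan is to extend the minimizing subsolution past the saturation time $t_\ast=\sqrt{3L/(gA)}$, at which $a(t_\ast)=L$, the mixing zone fills $\Omega$, and $\rho(\cdot,t_\ast)=x_n/L$. For $t\ge t_\ast$ I will set $\rho(x,t)=c(t)x_n$ with $c:[t_\ast,\infty)\to (0,1/L]$ of class $\cC^2$, strictly decreasing, satisfying $c(t_\ast)=1/L$, $\dot c(t_\ast)=-\frac{2}{3L^2}\sqrt{3gAL}$ (the value forced by $\dot c=-\dot a/a^2$ at $t_\ast$), and $c(t),\dot c(t)\to 0$ as $t\to\infty$. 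A concrete choice is $c(t)=1/(L+\lambda(t-t_\ast))$ with $\lambda=\frac{2}{3}\sqrt{3gAL}$, which has $|\dot c(t)|=\lambda c(t)^2\le|\dot c(t_\ast)|$. I keep $v\equiv 0$ and $m_i\equiv 0$ for $i<n$, and determine $m_n$ from the continuity equation plus the no-flux condition at $x_n=\pm L$, giving $m_n(x,t)=\frac{\dot c(t)}{2}(L^2-x_n^2)$. To minimise $Q(z)$ I take $\sigma$ diagonal and trace-free with $\sigma_{ii}=-\frac{m_n^2}{n(1-\rho^2)}$ for $i<n$ and $\sigma_{nn}=\frac{(n-1)m_n^2}{n(1-\rho^2)}$, so that $M(z)=\frac{m_n^2}{n(1-\rho^2)}\id$ and $Q(z)=\frac{m_n^2}{n(1-\rho^2)}$; the pressure $p$ is determined up to a harmless time-dependent constant by integrating the momentum equation in $x_n$.

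For the relaxation energy I choose
\[
e(x,t)[r]=\frac{T_-(x,t)+T_+(x,t)}{2}+r\cdot\frac{T_+(x,t)-T_-(x,t)}{2}+\eta(t),
\]
with $T_\pm=\frac{m_n^2}{n(1\pm\rho)^2}$ and a small positive slack $\eta(t)\searrow 0$. Since $Q=\frac{1-\rho}{2}T_-+\frac{1+\rho}{2}T_+$, one gets $e[\pm1]=T_\pm+\eta>T_\pm$ and $e[\rho]=Q+\eta>Q$ strictly, so $z(x,t)\in U_{(x,t)}$ on $\Omega\times(t_\ast,\infty)$. Direct substitution at $t_\ast$ using $\rho=x_n/L$, $m_n=-\frac{\lambda(L^2-x_n^2)}{2L^2}$ and $\lambda^2=\frac{4gAL}{3}$ yields $\frac{T_++T_-}{2}|_{t_\ast}=\frac{gA(x_n^2+L^2)}{3nL}$ and $\frac{T_+-T_-}{2}|_{t_\ast}=-\frac{2gAx_n}{3n}$, which are exactly the values of the Phase~1 minimizer's $(e_0,e_1)$ at $t_\ast$. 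Gluing along the measure-zero slice $\{t=t_\ast\}$ thus produces a subsolution on $\Omega\times(0,\infty)$ with mixing zone $\mathscr U=\{|x_n|<a(t),\,0<t<t_\ast\}\cup(\Omega\times(t_\ast,\infty))$; the condition $z\in K_{(x,t)}$ a.e.\ outside $\mathscr U$ is trivially fulfilled on $\{t=t_\ast\}$.

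Admissibility reduces to verifying
\[
E(t)=\frac{\dot c(t)^2}{8}\int_{-L}^L\frac{(L^2-x_n^2)^2}{1-c(t)^2x_n^2}\,dx_n+\frac{2gAL^3c(t)}{3}+n\eta(t)L<gAL^2=E(0).
\]
The integrand of the first term is monotone in $c^2$ (and a direct maximization shows $\frac{m_n^2}{1-\rho^2}\le\frac{\dot c^2 L^4}{4}$), while $c\le 1/L$ and $|\dot c|\le|\dot c(t_\ast)|$ force the first two summands to be bounded by their values at $t_\ast$, namely $\frac{2gAL^2}{9}$ and $\frac{2gAL^2}{3}$. Hence $E(t)\le\frac{8gAL^2}{9}+n\eta(t)L$, which is strictly less than $gAL^2$ provided $\eta$ is small (any bound $\eta\le gAL/(18n)$ suffices), and Remark~\ref{rem:admissibility} then produces admissible induced solutions.

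The long-time behaviour is immediate from the construction: $c(t),\dot c(t)\to 0$ yields $\rho,m,\sigma\to 0$ pointwise, and fixing the $x$-constant in $p$ to vanish gives $p\to 0$, hence $z\to 0$; simultaneously $T_\pm\to 0$ and $\eta\to 0$, so the kinetic energy density $\frac{n}{2}e[\rho]=\frac{n}{2}(Q+\eta)$ converges to $0$ uniformly. The one genuinely delicate point of the argument is the identity $\tilde e|_{t_\ast}=\frac{T_++T_-}{2}|_{t_\ast}$ together with the matching of $e_1$: this is what allows the two phases to be glued with continuous $\pi(z)$ and $e$; everything else is routine substitution combined with the elementary monotonicity bound on $E(t)$.
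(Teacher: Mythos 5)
Your construction is correct in substance, but it takes a genuinely different route from the paper's. The paper keeps the self-similar formula all the way, i.e.\ $\rho(x,t)=\tfrac{3x_n}{gAt^2}$ (so the effective density gradient decays like $t^{-2}$), retains the ansatz $e_1(x,t)=-\varepsilon gAx_n$ but makes $\varepsilon=\varepsilon(t)=\tfrac{2}{3n}\sqrt{3L/(gA)}\,t^{-1}$ time-dependent, keeps the max-formula \eqref{eq:subse} for $\tilde e_\varepsilon$, determines by an explicit inequality which branch of the maximum is active, and then computes $E(t)$ in closed form and shows it is \emph{monotone decreasing}. You instead take $\rho=c(t)x_n$ with $c(t)\sim t^{-1}$ (matching $c(t_*)=1/L$ and $\dot c(t_*)$, which is all the gluing of $\pi(z)$ needs), and you choose $e_0=\tfrac{T_++T_-}{2}$, $e_1=\tfrac{T_+-T_-}{2}$ plus slack. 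This is an elegant move: since your $\sigma$ makes $M(z)$ a multiple of the identity, $Q=\tfrac{1-\rho}{2}T_-+\tfrac{1+\rho}{2}T_+$, so all three hull inequalities become ``$\le$ plus slack'' automatically and the branch analysis of the max disappears; the price is that your $e_1$ is no longer of the form $-\varepsilon gAx_n$ for $t>t_*$ (it is odd in $x_n$ but not linear), which is permitted by Theorem \ref{thm:main2} but departs from the paper's normalization in Section \ref{sec:choices_concerning_the_energy}. Your admissibility check is a crude comparison ($c\le 1/L$, $|\dot c|\le|\dot c(t_*)|$, integrand monotone in $c^2$) giving $E(t)\le\tfrac{8}{9}gAL^2+n\eta L<E(0)$, which suffices for Definition \ref{def:admissibility_of_weak_solutions} but, unlike the paper's computation, does not yield the strong (monotone) energy inequality. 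The key matching computation $e_0|_{t_*}=\tfrac{gA(L^2+x_n^2)}{3nL}$, $e_1|_{t_*}=-\tfrac{2gAx_n}{3n}$ is correct and is exactly what makes the gluing possible; it works because for the minimizer of Theorem \ref{thm:selection_by_initial_dissipation} the two branches of the max in \eqref{eq:subse} coincide identically.

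Two small repairs are needed. First, your slack $\eta(t)$ is constant in $x$, whereas the Phase~1 subsolution carries the slack $(1-\rho^2)\delta(x,t)$, which vanishes as $x_n\to\pm a(t)$; hence $e_0$ as written is discontinuous across the slice $\{t=t_*\}$, violating the continuity of $e$ on $\mathscr U$ required by Definition \ref{def:subsolEuler}. Replacing $\eta(t)$ by $(1-\rho(x,t)^2)\eta(x,t)$ with a single continuous positive $\eta$ defined on all of $\mathscr U$ fixes this without affecting any estimate. Second, the slice $\Omega\times\{t_*\}$ should be \emph{included} in the mixing zone (e.g.\ $\mathscr U=\{|x_n|<a(t)\}$ with $a$ extended by $a(t)=1/c(t)>L$ for $t>t_*$): your claim that $z\in K_{(x,t)}$ ``is trivially fulfilled'' on $\{t=t_*\}$ is false ($\rho(\cdot,t_*)=x_n/L\notin\{\pm1\}$); it is merely irrelevant there because the slice is a null set, but the clean formulation is to put it inside $\mathscr U$, where $z\in U_{(x,t)}$ does hold.
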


\begin{proposition}\label{prop:demix}
There exists 
$T_{end}\in\left(\sqrt{\frac{3L}{gA}},+\infty\right)$
such that the minimizing subsolution from Theorem \ref{thm:selection_by_initial_dissipation} with $o(t^2)=0$ can be extended in an admissible manner to $\Omega\times(0,T_{end})$, and at $T_{end}$ it reaches the stable configuration $\rho=-\rho_0$, $(v,m,\sigma)\equiv 0$,  $p=const.$, $e(\cdot,T_{end})[\cdot]\equiv 0$.
\end{proposition}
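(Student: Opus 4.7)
Starting from the minimizing subsolution at time $T_1:=\sqrt{3L/(gA)}$ (where the mixing zone has just filled $\Omega$ and $\rho(x,T_1)=x_n/L$), the plan is to construct the extension in two consecutive phases using the one-dimensional ansatz $v\equiv 0$, $m_i\equiv 0$ for $i<n$, $\sigma\equiv 0$, and with $\rho,m_n,p$ depending only on $(x_n,t)$. Under this ansatz, \eqref{eq:subsolution_system_sec2} reduces to the scalar continuity equation $\partial_t\rho+\partial_{x_n}m_n=0$ together with the hydrostatic relation $\partial_{x_n}p=-\rho gA$, and the functions in \eqref{eq:definition_of_functions_for_hull} simplify to $Q(z)=m_n^2/(1-\rho^2)$ and $T_\pm(z)=m_n^2/(n(\rho\pm 1)^2)$. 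The first phase inverts the slope of the linear profile by passing continuously through the fully mixed state $\rho\equiv 0$; the second phase then demixes by shrinking a central transition layer to a point in finite time.

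\textbf{Phase 1 (inversion), $t\in[T_1,T_2]$.} Pick $T_2>T_1$ and $\beta\in\cC^2([T_1,T_2])$ strictly decreasing with $\beta(T_1)=1/L$, $\beta(T_2)=-1/L$ and $\dot\beta(T_1)=-\dot a(T_1)/L^2$, where $a(t)=\tfrac{1}{3}gAt^2$ is the mixing-zone growth of the preceding subsolution. Setting $\rho(x,t)=\beta(t)x_n$ on all of $\Omega$ and solving the continuity equation with $m_n|_{x_n=\pm L}=0$ gives $m_n(x,t)=\tfrac{1}{2}\dot\beta(t)(L^2-x_n^2)$; the derivative matching at $T_1$ ensures continuity of $m_n$ with the preceding subsolution. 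Since $|\beta(t)x_n|<1$ strictly in $\Omega$, the mixing zone equals all of $\Omega$ throughout Phase 1.

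\textbf{Phase 2 (demixing), $t\in[T_2,T_{end}]$.} Pick $T_{end}>T_2$ and $c\in\cC^2([T_2,T_{end}])$ strictly decreasing with $c(T_2)=L$, $c(T_{end})=0$, $\dot c(T_2)=L^2\dot\beta(T_2)$ and $\dot c(T_{end})=0$, for instance the rescaled quadratic $c(t)=L\bigl((T_{end}-t)/(T_{end}-T_2)\bigr)^2$. Define the mixing zone $\mathscr U:=\set{(x,t) : |x_n|<c(t)}$ and set $\rho(x,t)=-x_n/c(t)$ inside $\mathscr U$ and $\rho(x,t)=-\sign(x_n)$ outside. The continuity equation then gives $m_n(x,t)=\tfrac{1}{2}\dot c(t)(c(t)^2-x_n^2)/c(t)^2$ inside $\mathscr U$ and $m_n=0$ outside, continuous across $|x_n|=c(t)$ and matched to Phase~1 at $T_2$. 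At $t=T_{end}$ the profile reaches $\rho=-\sign(x_n)=-\rho_0$, and $m_n\equiv 0$ because $\dot c(T_{end})=0$.

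\textbf{Admissibility and main obstacle.} Choose $\varepsilon\in[0,2/n]$, possibly $t$-dependent and vanishing at $T_{end}$, and define $e_0(x,t)$ inside the mixing zone as the pointwise maximum of $T_+(z)+\varepsilon gAx_n$, $T_-(z)-\varepsilon gAx_n$ and $Q(z)+\varepsilon gAx_n\rho$, plus a small continuous slack $\eta(t)>0$ vanishing at $T_{end}$; outside, set $e_0=\varepsilon gAx_n\sign(x_n)$ so that $(v,m,\sigma)\equiv 0$ together with $\rho\in\{\pm 1\}$ lies in $K_{(x,t)}$. The potential-energy contribution $(1-\tfrac{n\varepsilon}{2})\int_\Omega gAx_n\rho\,dx$ of $E(t)$ in \eqref{eq:subste} decreases strictly along both phases, since the profile relaxes monotonically toward the minimizing configuration. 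Direct computation yields $Q,T_\pm\leq C\dot\beta^2$ in Phase 1 and $Q,T_\pm\leq C\dot c^2$ in Phase 2, so by choosing $\beta$ and $c$ with sufficiently small derivatives (equivalently, making $T_2-T_1$ and $T_{end}-T_2$ large enough) the kinetic contribution $\tfrac{n}{2}\int_\Omega e_0\,dx$ is dominated by the potential dissipation, $E(t)$ is strictly decreasing, and admissibility follows. The main obstacle is the behaviour at $t=T_{end}$, where the prefactors $1/(1-\rho^2)$ and $1/(\rho\pm 1)^2$ in $Q$ and $T_\pm$ a priori diverge as $|x_n|/c(t)\to 1$ while $c(t)\to 0$; the quadratic choice of $c$ with $\dot c(T_{end})=0$ ensures that $m_n^2$ vanishes fast enough that $Q,T_\pm\to 0$ uniformly in $x$, so together with $\varepsilon(T_{end})=0$ one obtains $e(\cdot,T_{end})[\cdot]\equiv 0$ as required.
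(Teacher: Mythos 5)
Your two-phase structure (rotate the linear profile through the fully mixed state, then shrink a central transition layer to a point) is exactly the paper's strategy, and your formulas for $\rho$ and $m_n$ in both phases agree with theirs. However, there is a genuine gap in the choice $\sigma\equiv 0$. With $v\equiv 0$ and $\sigma\equiv 0$ the matrix in \eqref{eq:definition_of_functions_for_hull} is $M(z)=\frac{m\otimes m}{1-\rho^2}=\frac{m_n^2}{1-\rho^2}\,e_n\otimes e_n$, so $Q(z)=\frac{m_n^2}{1-\rho^2}=n\left(\frac{1+\rho}{2}T_+(z)+\frac{1-\rho}{2}T_-(z)\right)$, and the constraint $Q<e[\rho]$ forces $e_0\geq \frac{m_n^2}{1-\rho^2}+\varepsilon gAx_n\rho$, i.e.\ a kinetic contribution $\frac{n}{2}e_0$ that is $n$ times larger than what the $T_\pm$ constraints alone would require. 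The paper instead takes $\sigma=\frac{(m\otimes m)^\circ}{1-\rho^2}$ as in \eqref{eq:subsp}, which makes $M(z)=\frac{m_n^2}{n(1-\rho^2)}\id$, so that $Q=\frac{1+\rho}{2}T_++\frac{1-\rho}{2}T_-$ and the $Q$-constraint is automatic once $T_\pm<e[\pm1]$; this factor of $n$ is not cosmetic but is exactly what makes admissibility work.

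Concretely, your escape route of ``choosing $\beta$ and $c$ with sufficiently small derivatives'' is not available at $t=T_1=\sqrt{3L/(gA)}$: continuity of $m$ on the mixing zone pins $\dot\beta(T_1)=-2\sqrt{gA/(3L^3)}$, hence $m_n(x,T_1)^2=\frac{gA}{3L^3}(L^2-x_n^2)^2$. With your $e_0\geq Q+\varepsilon gAx_n\rho$ the $\varepsilon$-terms cancel against the potential part and one gets, for $t\to T_1^+$,
\begin{align*}
E(t)\;\geq\;\frac{n}{2}\int_\Omega \frac{m_n^2}{1-\rho^2}\,dx+\int_\Omega gAx_n\rho\,dx
\;=\;n\cdot\frac{2gAL^2}{9}+\frac{2gAL^2}{3}\;=\;\frac{2n+6}{9}\,gAL^2,
\end{align*}
which exceeds $E(0)=gAL^2$ for every $n\geq 2$, so the extension is inadmissible from the outset. (With the paper's $\sigma$ the same computation gives $\frac{8}{9}gAL^2$.) A secondary, fixable weakness is that even with the correct $\sigma$ you only assert that energy decreases ``for small enough derivatives'' rather than proving it; the paper makes this quantitative by choosing $r(t)$ as the solution of the ODE $\dot r^2\, I(r)=\frac{4}{9}gAL^2$, which renders $E_r(t)=\frac{4}{9}gAL^2(1+Lr(t))$ manifestly decreasing and yields an explicit finite $T_{end}$.
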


In fact, both subsolutions are not only admissible, but satisfy the strong energy inequality, which means that the total energy $\int_\Omega\cE_{sub}(x,t)\:dx$ is monotone decreasing w.r.t. time. 

Moreover, in the first case the subsolution satisfies $z(x,t)\in U_{(x,t)}$ for every $x\in\Omega$ and $t>\sqrt{\frac{3L}{gA}}$ while the closure of the hull $\overline{U}_{(x,t)}$ collapses as $t\rightarrow +\infty$ to the set $[-1,1]\times\{0\}\times\{0\}\times\{0\}\times\R\subset Z$
due to the decay of kinetic energy. Thus technically the mixing zone is unbounded here. In the second case we have that $z(x,T_{end})$ actually is a solution, i.e. $z(x,T_{end})\in K_{(x,T_{end})}$ for a.e. $x\in \Omega$. Clearly we can extend this subsolution to all times by  $z(\cdot,t)=z(\cdot,T_{end})$ for all $t>T_{end}$.

\section{Convex integration via the Tartar framework}\label{sec:tartar}

To prove our main result, we will use a version of the Tartar framework, originally introduced in the context of compensated compactness \cite{Tartar}, for differential inclusions when the set of nonlinear constraints is not constant (c.f. e.g. \cite{Castro-Faraco-Mengual,Crippa,DeL-Sz-Adm}).

The general strategy of convex integration in the Tartar framework relies on the idea that if one can find a weak solution $\tilde z$ of \eqref{eq:subsolution_system_sec2} which instead of taking values in $K_{(x,t)}$ satisfies $\tilde{z}(x,t)\in\text{int}\left(K_{(x,t)}^{co}\right)$, then one may deduce the existence of (infinitely many) solutions $z$ of \eqref{eq:subsolution_system_sec2}, which are near $\tilde{z}$ in the weak sense while satisfying $z(x,t)\in K_{(x,t)}$ a.e., by adding some specially constructed perturbations to $\tilde{z}$.
The perturbations rely on localized plane waves as basic building blocks.

\subsection{Localized plane waves}\label{sec:waves}
For $\bar{z}\in Z$ we define
\[
M_\Lambda(\bar{z}):=\begin{pmatrix}
\bar{\sigma}+\bar{p}\id & \bar{v}\\
\bar{v}^T & 0\\
\bar{m}^T & \bar{\rho}
\end{pmatrix}\in\R^{(n+2)\times(n+1)},
\]
such that the wave cone associated with \eqref{eq:subsolution_system_sec2} can be written as
\begin{equation}\label{eq:wave_cone}
\Lambda:=\set{\bar{z}\in Z:\ker M_\Lambda(\bar{z}) \neq \{0\},\quad (\bar\rho,\bar v)\neq 0}.
\end{equation}
Note that for $\bar{z}\in\Lambda$ there exists $\eta=(\xi,c)\in\R^{n+1}\setminus\{0\}$ such that every function $z(x,t)=\bar{z}h((x,t)\cdot\eta)$, $h\in\cC^1(\R)$ is a solution of \eqref{eq:subsolution_system_sec2}. This allows us to construct solutions which oscillate in the direction $\bar z$.
Note that
the condition $(\bar\rho,\bar v)\neq 0$ allows us to exclude the degenerate case when $\xi=0$, which would correspond to having only oscillations in time.

Let us define a restricted wave cone which also eliminates oscillations only in space, i.e.
\begin{equation}\label{eq:wave_cone_restricted}
\Lambda':=\set{\bar{z}\in \Lambda:\ \ker M_\Lambda(\bar{z})\cap \R^n\times (\R\setminus\{0\})\neq \emptyset }.
\end{equation}

In Lemma \ref{lem:locpw} below we construct localized plane wave-like solutions for \eqref{eq:subsolution_system_sec2} associated with $\bar{z}\in\Lambda'$. In order to see that it is enough to consider $\Lambda'$ instead of $\Lambda$ we first show the following density lemma.

\begin{lemma}\label{lem:density_of_lambda_prime}
The restricted cone $\Lambda'$ is dense in $\Lambda$.
\end{lemma}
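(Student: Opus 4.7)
Plan for proving Lemma \ref{lem:density_of_lambda_prime}:

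The plan is to show that any $\bar z\in \Lambda\setminus \Lambda'$ can be approximated by elements of $\Lambda'$ via an explicit perturbation whose size is controlled by a small parameter $c\to 0$. If $\bar z\in \Lambda\setminus \Lambda'$, then every kernel element of $M_\Lambda(\bar z)$ must have vanishing time component, so there exists $\xi\in \R^n\setminus\{0\}$ with $(\xi,0)\in \ker M_\Lambda(\bar z)$. Writing this out explicitly, the relations
\[
\bar\sigma\xi =-\bar p\xi,\qquad \bar v\cdot \xi=0,\qquad \bar m\cdot \xi=0
\]
hold. I would then fix this $\xi$ and, for small $c\neq 0$, look for a perturbed $\tilde z=(\tilde\rho,\tilde v,\tilde m,\tilde\sigma,\tilde p)$ such that $(\xi,c)\in \ker M_\Lambda(\tilde z)$ and $\tilde z\to\bar z$ as $c\to 0$.

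The construction I propose leaves the density, velocity, and pressure untouched, $\tilde\rho=\bar\rho$, $\tilde v=\bar v$, $\tilde p=\bar p$, and perturbs only the flux variables by
\[
\tilde m:=\bar m-\frac{c\bar\rho}{\abs{\xi}^2}\xi,\qquad \tilde\sigma:=\bar\sigma-\frac{c}{\abs{\xi}^2}\bigl(\bar v\otimes \xi+\xi\otimes \bar v\bigr).
\]
The matrix $\tilde\sigma$ is manifestly symmetric, and its trace equals $\tr\bar\sigma-\frac{2c}{\abs{\xi}^2}\bar v\cdot \xi=0$ thanks to $\bar v\cdot \xi=0$, so $\tilde\sigma\in \cS_0^{n\times n}$. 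A direct computation of the three block components of $M_\Lambda(\tilde z)(\xi,c)^T$, making essential use of $\bar\sigma\xi=-\bar p\xi$, $\bar v\cdot\xi=0$ and $\bar m\cdot\xi=0$, gives zero in each block, showing $(\xi,c)\in\ker M_\Lambda(\tilde z)$.

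Finally, since $(\tilde\rho,\tilde v)=(\bar\rho,\bar v)\neq 0$ and the second component of $(\xi,c)$ is nonzero, one has $\tilde z\in \Lambda'$, while $\tilde z\to\bar z$ in $Z$ as $c\to 0$. This produces the required approximating sequence. There is no serious obstacle here: the only delicate point is ensuring that the symmetric perturbation of $\bar\sigma$ remains trace-free, which is exactly what the orthogonality $\bar v\cdot \xi=0$ guarantees, so the formula $\bar v\otimes \xi+\xi\otimes\bar v$ is the natural choice.
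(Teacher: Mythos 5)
Your proposal is correct and follows essentially the same strategy as the paper: both fix a spatial direction $\xi$ witnessing $\bar z\in\Lambda\setminus\Lambda'$ (so that $(\bar\sigma+\bar p\id)\xi=0$, $\bar v\cdot\xi=0$, $\bar m\cdot\xi=0$) and then construct an explicit small perturbation of $\bar z$ for which $(\xi,c)$ with $c\neq0$ lies in the kernel of the associated matrix. Your variant is in fact a bit cleaner: you leave $\rho$, $v$ and $p$ untouched and perturb only $m$ and $\sigma$, whereas the paper additionally shifts $\bar\rho$ by $1/N$, which forces the normalization $|\xi|=1$ and introduces the factor $N^2\bar\rho+N$ in the denominator (vanishing when $\bar\rho=-1/N$, so strictly speaking one must restrict to large $N$). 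Your parametrization by $c\to0$ avoids both of these minor nuisances while relying on exactly the same observations, namely that $\bar v\otimes\xi+\xi\otimes\bar v$ is trace-free by $\bar v\cdot\xi=0$ and that $(\bar v\otimes\xi+\xi\otimes\bar v)\xi=|\xi|^2\bar v$.
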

\begin{proof}
Let $\bar{z}\in\Lambda\setminus\Lambda'$. It follows that there exists $\xi\in S^{n-1}$ such that $\bar{v}\cdot \xi=0$, and we also have $\bar m \cdot \xi=0$, $(\bar\sigma+\bar p \id)\xi=0$.

We define the following sequence. For $N\geq 1$ let
\begin{multline*}
\bar\rho_N:=\bar\rho+\frac{1}{N},\quad \bar v_N:=\bar{v},\quad \bar m_N:=\bar m+\frac{1}{N^2}\xi,\\ \bar\sigma_N+\bar p_N \id :=\bar\sigma+\bar p \id+\frac{1}{N^2\bar\rho+N}\left(\xi\otimes\bar v+\bar v\otimes\xi \right).
\end{multline*}
Here and in forthcoming formulas the definition of $\bar{\sigma}_N$ and $\bar{p}_N$ is understood in the sense that the symmetric matrix on the right hand side is split into its trace free part and its trace.

It is easy to check that $\left(\xi,-\frac{1}{N^2\bar\rho+N} \right)\in \ker M_\Lambda(\bar{z}_N)$, therefore $\bar{z}_N\in\Lambda'$ for $N\geq 1$. Furthermore, clearly $\bar z_N\to\bar z$ as $N\to+\infty$. 
This concludes the proof.
\end{proof}
Recall the definition of the projection $\pi:Z\rightarrow \R\times\R^n\times\R^n\times \cS_0^{n\times n}$ from \eqref{eq:projection}. We write $d$ for the euclidian distance function.

\begin{lemma}\label{lem:locpw}
There exists $C>0$ such that for any $\bar{z}\in\Lambda'$, there exists a sequence 
$z_N\in C_c^\infty(B_1(0);Z)$, where $B_1(0)\subset\R^n\times\R$, solving the linear system \eqref{eq:subsolution_system_sec2} and satisfying 
\begin{itemize}
\item[(i)] $d(z_N,[-\bar{z},\bar{z}])\to 0$ uniformly,
\item[(ii)] $z_N\rightharpoonup 0$ in $L^2(B_1(0);Z)$,
\item[(iii)] $\int_{B_1(0)} |\pi(z_N)|^2\, d(x,t)\geq C|\pi(\bar{z})|^2.$
\end{itemize}
\end{lemma}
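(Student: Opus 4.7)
I will construct $z_N = z_N^{\mathrm{hom}} + z_N^{\mathrm{corr}}$, where $z_N^{\mathrm{hom}}$ is a localized oscillating solution of the \emph{homogeneous} version of \eqref{eq:subsolution_system_sec2} (dropping the gravity source $-\rho gAe_n$) and $z_N^{\mathrm{corr}}$ is a lower-order correction absorbing that source. The first part is built via the standard Tartar potential framework, the second by a Bogovskii-type spatial divergence inversion that exploits the high-frequency oscillation of the residual.

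For $z_N^{\mathrm{hom}}$, fix $\eta=(\xi,c)\in\ker M_\Lambda(\bar z)$ with $c\neq 0$; note that then $\xi\neq 0$ automatically, since otherwise the kernel equations would force $\bar v=\bar\rho=0$, contradicting $\bar z\in\Lambda$. One constructs a constant-coefficient second-order differential operator $\cL$ acting on smooth potentials $\Phi\colon\R^{n+1}\to W$ (with $W$ a suitable finite-dimensional auxiliary space) such that $\cL(\Phi)\in Z$ always solves the homogeneous version of \eqref{eq:subsolution_system_sec2}. The ingredients are classical: for $(\rho,m)$ one uses the space--time curl of an antisymmetric tensor (yielding $\partial_t\rho+\divv m=0$), for $v$ the divergence of a spatial antisymmetric tensor (yielding $\divv v=0$), and for $(\sigma,p)$ a further symmetric potential tuned so that the homogeneous momentum balance holds identically. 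Computing the symbol at $\eta$ and using $M_\Lambda(\bar z)\eta=0$ produces $\bar\Phi\in W$ with $\cL(\bar\Phi\,h((x,t)\cdot\eta))=\bar z\,h''((x,t)\cdot\eta)$ for every $h\in C^2(\R)$. Taking $\chi\in C_c^\infty(B_1(0))$, $0\leq\chi\leq 1$, $\chi\equiv 1$ on $B_{1/2}(0)$, set
\[
z_N^{\mathrm{hom}}:=\cL\!\left(\tfrac{1}{N^2}\chi(x,t)\bar\Phi\sin(N(x,t)\cdot\eta)\right)\in C_c^\infty(B_1(0);Z).
\]
Expanding via Leibniz, the terms in which both derivatives in $\cL$ hit the oscillating factor combine to $-\chi(x,t)\bar z\sin(N(x,t)\cdot\eta)$, while every other term carries at least one derivative of $\chi$ and hence a factor $1/N$; thus $\|z_N^{\mathrm{hom}}+\chi\bar z\sin(N(x,t)\cdot\eta)\|_{L^\infty}\leq C/N$.

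For $z_N^{\mathrm{corr}}$, observe that $z_N^{\mathrm{hom}}$ satisfies \eqref{eq:subsolution_system_sec2} exactly, except for the momentum equation, where the residual is $f_N:=-\rho_N^{\mathrm{hom}}gAe_n\in C_c^\infty(B_1(0);\R^n)$, oscillating at frequency $N$. I take $\rho^{\mathrm{corr}}_N=m^{\mathrm{corr}}_N=v^{\mathrm{corr}}_N=0$ and seek a symmetric matrix $S_N\in C_c^\infty(B_1(0);\cS^{n\times n})$ with $\divv S_N=f_N$ and $\|S_N\|_{L^\infty}=O(1/N)$; then $\sigma^{\mathrm{corr}}_N:=S_N^\circ$ and $p^{\mathrm{corr}}_N:=\tfrac{1}{n}\tr S_N$ yield a valid correction (the continuity and incompressibility equations for $z_N^{\mathrm{corr}}$ hold trivially). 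Such $S_N$ is produced by a Bogovskii-type spatial divergence inversion combined with the high-frequency oscillation: since the principal part of $f_N$ has Fourier content concentrated near spatial frequency $N\xi$ with $\xi\neq 0$, inversion via Fourier multipliers yields symbols of norm $\sim 1/N$ applied to a smooth compactly supported datum; compact support in $B_1(0)$ is restored by a further cutoff, the resulting lower-order error being absorbed by iteration.

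Setting $z_N:=z_N^{\mathrm{hom}}+z_N^{\mathrm{corr}}$, property (i) follows from $z_N=-\chi\bar z\sin(N(x,t)\cdot\eta)+O(1/N)$ in $L^\infty$ together with $-\chi(x,t)\sin(N(x,t)\cdot\eta)\in[-1,1]$; property (ii) follows from $\sin(N\,\cdot)\rightharpoonup 0$ in $L^2$; and property (iii) follows from $\int_{B_1(0)}\chi^2\sin^2(N(x,t)\cdot\eta)\,d(x,t)\to\tfrac12\|\chi\|_{L^2}^2$, giving $\int_{B_1(0)}|\pi(z_N)|^2\,d(x,t)\geq\tfrac14\|\chi\|_{L^2}^2|\pi(\bar z)|^2$ for all sufficiently large $N$, and hence the desired estimate with $C=\tfrac14\|\chi\|_{L^2(B_1(0))}^2$ independent of $\bar z$. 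The main obstacle is the construction of $z_N^{\mathrm{corr}}$: unlike the homogeneous Euler setting of \cite{DeL-Sz-Annals} or the transformed inhomogeneous Euler setting of \cite{GKSz}, the Boussinesq system \eqref{eq:subsolution_system_sec2} carries the zeroth-order gravity source, so a plane wave from $\ker M_\Lambda(\bar z)$ alone does not satisfy the full linear system. Producing a correction that is simultaneously compactly supported in $B_1(0)$, symmetric, and of order $1/N$ is the essential new ingredient of the lemma, and it is precisely for this step that Lemma~\ref{lem:density_of_lambda_prime} and the restriction to the smaller wave cone $\Lambda'$ (with $c\neq 0$) are used.
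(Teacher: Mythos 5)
The decomposition into an oscillating piece plus a gravity correction is the right overall idea, but your correction mechanism is genuinely different from the paper's and contains a gap that is not easy to fill. The paper splits $\bar z$ along the lines $(\bar\rho,\bar v,\bar m,\bar\sigma,\bar p)=(0,\bar v,0,\bar\sigma,\bar p)+(\bar\rho,0,\bar m,0,0)$. For the first piece it builds $\hat z_N$ with $\hat\rho_N\equiv 0$, so the gravity source \emph{vanishes identically} for this piece and the standard homogeneous Euler potentials apply unchanged. For the second piece it introduces the potential \eqref{eq:definition_of_second_plane_waves}, where the components $\tilde v$, $\tilde\sigma+\tilde p\,\id$ are \emph{second}-order derivatives of $\Psi$, while $\tilde\rho$, $\tilde m$ are third-order; with $\Psi_N=O(N^{-3})$ oscillating at frequency $N$, the $(\tilde v_N,\tilde\sigma_N,\tilde p_N)$-part is $O(1/N)$ automatically, and — crucially — the gravity term $-\tilde\rho gAe_n$ is exactly cancelled in the momentum balance by construction. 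The correction is thus produced \emph{as a potential}, so compact support, exactness, and the $O(1/N)$ size all come for free.

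Your version instead takes $z_N^{\mathrm{hom}}$ with all components (including $\rho_N^{\mathrm{hom}}$) oscillating at amplitude $\sim|\bar z|$, leaving an $O(1)$ residual $f_N=-\rho_N^{\mathrm{hom}}gAe_n$ in the momentum equation, and then asks for a compactly supported symmetric $S_N$ with $\divv S_N=f_N$ and $\|S_N\|_{L^\infty}=O(1/N)$. This runs into a real obstruction: a compactly supported solution of $\divv S=f$ with $S\in\cS^{n\times n}$ requires the zeroth and first moments of $f(\cdot,t)$ (total force and torque) to vanish at every time slice, and they do not vanish exactly here — only up to $O(N^{-\infty})$ from non-stationary phase. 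Your plan to pass through a Fourier multiplier (which is singular at spatial frequency zero and does not preserve compact support) and then ``restore compact support by a further cutoff, the resulting lower-order error being absorbed by iteration'' leaves the central difficulty unresolved: the cutoff residual $(\nabla\chi')\cdot\tilde S_N$ is not oscillatory and does not shrink under the same gain, so it is not clear the iteration closes. This is precisely the step the paper's potential construction is designed to bypass, and it is the essential new ingredient you would need to supply. (Separately, you do not address the case distinction $\bar v\neq 0$ vs.\ $\bar v=0$ the paper needs in Step 1 in order to apply or replace the standard Euler potential, but that is a smaller omission.)
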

\begin{proof}
We will construct the desired sequence of solutions as a sum of two sequences $z_N=\hat{z}_N+\tilde{z}_N$, where $\hat{z}_N$ will be a localized plane wave for the usual Euler equations determining up to a small deviation $v_N$, $\sigma_N$ and $p_N$, while $\tilde{z}_N$ will take care of $\rho_N$ and $m_N$.

\textbf{Step 1. Euler-type plane waves.}

We treat two cases.
First, suppose that
 $\bar{z}\in \Lambda'$ with $\bar v\neq 0$. It follows from \cite{DeL-Sz-Annals,DeL-Sz-Adm} that there exists a sequence $(\hat{v}_N,\hat{\sigma}_N,\hat{p}_N)\subset \cC_c^\infty(B_1(0);\R^n\times\cS_0^{n\times n}\times\R)$ satisfying 
\begin{align*}
\partial_t\hat{v}_N+\divv\hat{\sigma}_N+\nabla \hat{p}_N=0,\quad 
\divv \hat{v}_N=0,
\end{align*}
and such that the distance between $(\hat{v}_N(x,t),\hat{\sigma}_N(x,t),\hat{p}_N(x,t))$ and the line segment $[-(\bar{v},\bar{\sigma},\bar{p}),(\bar{v},\bar{\sigma},\bar{p})]$ converges to $0$ uniformly in $(x,t)$, $(\hat{v}_N,\hat{\sigma}_N,\hat{p}_N)\rightharpoonup 0$ in $L^2$ and $\int_{B_1(0)}\abs{(\hat{v}_N,\hat{\sigma}_N,\hat{p}_N)}^2\:d(x,t)\geq \hat{C} \abs{(\bar{v},\bar{\sigma},\bar{p})}^2$ for a constant $\hat{C}>0$ independent of $\bar{z}$. We then define the whole vector $\hat{z}_N$ by setting $\hat{\rho}_N=0$ and $\hat{m}_N=0$. Clearly $\hat{z}_N$ satisfies \eqref{eq:subsolution_system_sec2}.

In the second case, if $\bar{z}\in \Lambda'$ such that $\bar v= 0$, one can not apply the construction from \cite{DeL-Sz-Annals,DeL-Sz-Adm}, however one may construct a different suitable potential in the following way.
We know that by the definition of $\Lambda'$ there exists $\eta=(\xi,c)\in\R^n\times\R$ with $c\neq 0$ and $M_\Lambda(\bar{z})\eta=0$. In particular $\bar{m}\cdot\xi+\bar{\rho}c=0$. As already discussed before there necessarily holds $\xi\neq 0$, because otherwise $(\bar\rho,\bar{v})=0$, which is ruled out by the definition of $\Lambda'$. However, since $\bar v=0$, we also obtain $(\bar\sigma+\bar p\id)\xi=0.$

If $n=2$, then this implies that $\bar\sigma+\bar p\id=k_1\xi^\perp\otimes \xi^\perp$ for some $k_1\in\R$. Here $\xi^\perp:=(-\xi_2,\xi_1)$. Furthermore, for any $\Psi\in\cC^\infty(\R^2\times\R)$, setting
\begin{align}
\begin{gathered}\label{eq:pott}
\hat\sigma+\hat p\id:=(\nabla^\perp)^2\Psi=\begin{pmatrix}
\partial_{x_2}^2\Psi & -\partial_{x_1}\partial_{x_2}\Psi\\
-\partial_{x_1}\partial_{x_2}\Psi &\partial_{x_1}^2\Psi
\end{pmatrix},\\
 \hat v\equiv 0,\ \hat\rho\equiv0,\ \hat m\equiv 0,
\end{gathered}
\end{align}
 yields a solution of \eqref{eq:subsolution_system_sec2}. 
In particular, setting
$$\Psi_N(x,t):=k_1\frac{1}{N^2}\sin(N(x\cdot\xi+tc))\chi_\varepsilon(x,t),$$
where $\chi_\varepsilon\in\cC^\infty_c(B_1(0))$ satisfies $\abs{\chi_\varepsilon}\leq 1$ on $B_1(0)$, $\chi_\varepsilon=1$ on $B_{1-\varepsilon}(0)$,
one obtains that the function $\hat z_N$ associated via \eqref{eq:pott} satisfies
\[
\hat\sigma_N(x,t)+\hat p_N(x,t)\id=-(\bar\sigma+\bar p\id)\sin(N(x\cdot\xi+tc))\chi_\varepsilon(x,t)+O(1/N)
\]
uniformly in $(x,t)$ as $N\rightarrow+\infty$.
The remaining properties in analogy to the first case then follow in the usual way, cf. in particular Lemma 7 in \cite{DeL-Sz-Adm}.

If $n=3$, it follows that $(0,\xi)$ is an eigenpair of $\bar\sigma+\bar p\id$, hence by a spectral decomposition one obtains that $\bar\sigma+\bar p\id=\lambda_1\nu_1\otimes\nu_1+\lambda_2\nu_2\otimes\nu_2$, for some $\lambda_{1,2}\in\R$ and $\nu_{1,2}\perp\xi$.
Assume without loss of generality that the second component of $\xi$ is not vanishing, such that one may write $\nu_{1,2}$ as linear combinations of $\xi^\perp_1:=(-\xi_2,\xi_1,0)^T$ and $\xi^\perp_2:=(0,-\xi_3,\xi_2)^T$. Otherwise, i.e. if $\xi_2=0$, one can use the corresponding pair of linear independent orthogonal vectors associated with $\xi_1\neq 0$ or $\xi_3\neq 0$.
These linear combinations allow us to deduce that 
there exist some $k_1,k_2,k_3\in\R$ such that 
\begin{align}\label{eq:goodluck}\bar\sigma+\bar p\id=k_1\xi_1^\perp\otimes \xi_1^\perp+k_2\xi_2^\perp\otimes \xi_2^\perp+k_3(\xi_1^\perp\otimes \xi_2^\perp+\xi_2^\perp\otimes \xi_1^\perp).
\end{align}
Observe that, for any $\Phi\in\cC^\infty(\R^3\times\R;\R^3)$  setting 
\begin{align}\label{eq:pottt}
\begin{split}
\hat\sigma+\hat p\id:=&\begin{pmatrix}
\partial_2^2\Phi_1 & -\partial_1\partial_2\Phi_1 & 0\\
-\partial_1\partial_2\Phi_1 &  \partial_1^2\Phi_1 & 0\\
0 & 0 & 0
\end{pmatrix}
+\begin{pmatrix}
0 & 0 & 0\\
0&\partial_3^2\Phi_2 & -\partial_2\partial_3\Phi_2 \\
0&-\partial_2\partial_3\Phi_2 &  \partial_2^2\Phi_2 
\end{pmatrix}\\
&\hspace{45pt}+\begin{pmatrix}
0 & \partial_2\partial_3\Phi_3 & -\partial_2^2\Phi_3\\
\partial_2\partial_3\Phi_3 & -2\partial_1\partial_3\Phi_3 & \partial_1\partial_2\Phi_3\\
-\partial_2^2\Phi_3 & \partial_1\partial_2\Phi_3 & 0
\end{pmatrix},\\ \hat v\equiv 0,\  \hat\rho&\equiv0,\ \hat m\equiv 0,
\end{split}
\end{align} 
yields a solution of \eqref{eq:subsolution_system_sec2}. We then choose
\begin{align*}
\Phi_{N}(x,t):=(k_1,k_2,k_3)\frac{1}{N^2}\sin(N(x\cdot\xi+tc))\chi_\varepsilon(x,t),
\end{align*}
to obtain by \eqref{eq:goodluck} that the function $\hat z_N$ associated via \eqref{eq:pottt} satisfies
$$\hat\sigma_N+\hat p_N\id=-(\bar\sigma+\bar p\id)\sin(N(x\cdot\xi+tc))\chi_\varepsilon(x,t)+O(1/N).$$
One then concludes as in the case $n=2$.

For higher dimensions, one may proceed analogously, the details are left to the reader. This concludes the first step of our construction.

\textbf{Step 2. The potential for $\bar\rho$ and $\bar m$.}

We will show that there exists a constant $\tilde{C}>0$ independent of $\bar{z}$, and a sequence $\tilde{z}_N\subset\cC^\infty_c(B_1(0);Z)$ of solutions of \eqref{eq:subsolution_system_sec2}, such that
\begin{itemize}
\item[a)] $(\tilde{v}_N,\tilde{\sigma}_N,\tilde{p}_N)\rightarrow 0$ uniformly,
\item[b)] $d\big((\tilde{\rho}_N,\tilde{m}_N),[-(\bar{\rho},\bar{m}),(\bar{\rho},\bar{m})]\big)\rightarrow 0$ uniformly,
\item[c)] $(\tilde{\rho}_N,\tilde{m}_N)\rightharpoonup 0$ in $L^2(B_1(0);\R\times\R^n)$,
\item[d)] $\int_{B_1(0)}\abs{(\tilde{\rho}_N,\tilde{m}_N)}^2\:d(x,t)\geq \tilde{C}\abs{(\bar{\rho},\bar{m})}^2$.
\end{itemize}
It is clear that $C:=\min\set{\tilde{C},\hat{C}}$ and $z_N:=\hat{z}_N+\tilde{z}_N$ then satisfies the properties stated in the lemma.

For the existence of $\tilde{z}_N$ observe that for any $\Psi\in\cC^\infty(\R^n\times\R;\cS^{n\times n})$ the function $\tilde{z}$ defined by
\begin{gather}
\begin{gathered}\label{eq:definition_of_second_plane_waves}
\tilde{\rho}:=\partial_t\divv\divv\Psi,\quad \tilde{v}:=gA\partial_{x_n}\divv \Psi-gAe_n\divv\divv\Psi,\\
\tilde{m}:=-\partial_t^2\divv\Psi,\quad 
\tilde{\sigma}+\tilde{p}\id:=-gA\partial_t\partial_{x_n}\Psi
\end{gathered}
\end{gather}
satisfies equation \eqref{eq:subsolution_system_sec2}. 

As before, there exists $\eta=(\xi,c)\in\R^n\times\R$ with $c\neq 0$ and $M_\Lambda(\bar{z})\eta=0$. In particular $\bar{m}\cdot\xi+\bar{\rho}c=0$ and necessarily $\xi\neq 0$. The functions $\tilde{z}_N$ are now defined as in \eqref{eq:definition_of_second_plane_waves} with $\Psi=\Psi_N$ given by
\begin{align*}
\Psi_N(x,t):=\frac{1}{N^3}\tilde{M}\sin\left(N(x\cdot\xi+tc)\right)\chi_\varepsilon(x,t),
\end{align*}
where $\chi_\varepsilon\in\cC^\infty_c(B_1(0))$ again is the usual cut-off function and 
\begin{align*}
\tilde{M}:=\frac{\bar{m}\cdot\xi}{c^2\abs{\xi}^4}\xi\otimes\xi-\frac{1}{c^2\abs{\xi}^2}(\bar{m}\otimes\xi+\xi\otimes\bar{m})\in\cS^{n\times n}.
\end{align*}
Clearly a) holds true, since the definition of $\tilde{v}_N$, $\tilde{\sigma}_N$ and $\tilde{p}_N$ in \eqref{eq:definition_of_second_plane_waves} involves only derivatives of order $2$. Moreover,
\begin{align*}
\tilde{m}_N(x,t)&=c^2\tilde{M}\xi \cos\left(N(x\cdot \xi+tc)\right)\chi_\varepsilon(x,t)+O(1/N),\\
\tilde{\rho}_N(x,t)&=c\xi^T\tilde{M}\xi \cos\left(N(x\cdot \xi+tc)\right)\chi_\varepsilon(x,t)+O(1/N)
\end{align*}
uniformly as $N\rightarrow +\infty$, 
and $c^2\tilde{M}\xi=-\bar{m}$, $c\xi^T\tilde{M}\xi=-\bar{m}\cdot\xi/c=\bar{\rho}$. This shows property b). Properties c) and d) follow again in the standard fashion.
\end{proof}
\begin{remark}\label{rem:slightly_stronger_convergence_of_plane_waves}
If one replaces in Lemma \ref{lem:locpw} the space-time ball $B_1(0)\subset \R^n\times \R$ by the cylinder $B_1(0)\times (-1,1)\subset \R^n\times\R$ and then chooses a cutoff function $\chi_\varepsilon\in\cC^\infty_0(B_1(0)\times (-1,1))$ of the form $\chi_\varepsilon(x,t)=\chi^a_\varepsilon(x)\chi_\varepsilon^b(t)$ with $\chi^a_\varepsilon\in\cC^\infty_0(B_1(0))$, $\chi_\varepsilon^b\in\cC^\infty_0(-1,1)$ one sees that the convergence in (ii) improves to $z_N\rightarrow 0$ in $\cC^0([-1,1];L^2_w(B_1(0)))$.
\end{remark}

\subsection{Perturbing along sufficiently long segments}\label{sec:segments}

In this subsection we prove that the wave cone $\Lambda$ is large with respect to $K_{(x,t)}$, in the sense that any two points in $K_{(x,t)}$ can be connected with a $\Lambda$-segment. Furthermore, this property automatically implies that any point in the interior of the convex hull of $K_{(x,t)}$ can be perturbed along sufficiently long $\Lambda$-segments. The set $K_{(x,t)}$ has been defined in \eqref{eq:nonlinear_constraints_sec2}.

For simplicity of notation, for the rest of the subsection we will fix a point $(x,t)\in\Omega\times[0,T)$ and write $K$ instead of $K_{(x,t)}$.

 \begin{lemma}\label{lem:big_fkn_cone}
 For any $z_1,z_2\in K$, $z_1\neq z_2$, $p_1=p_2$, we have $\bar{z}:=z_2-z_1\in\Lambda$.
 \end{lemma}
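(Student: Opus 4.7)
The plan is to explicitly construct a nontrivial $\eta=(\xi,c)\in\R^{n+1}$ lying in $\ker M_\Lambda(\bar z)$, and verify the auxiliary condition $(\bar\rho,\bar v)\neq 0$. Writing out the block structure of $M_\Lambda$, this amounts to solving the three (Rankine--Hugoniot-type) equations
\begin{align*}
(\bar\sigma+\bar p\,\id)\xi+c\bar v=0,\qquad \bar v\cdot\xi=0,\qquad \bar m\cdot\xi+\bar\rho\,c=0.
\end{align*}
From $z_1,z_2\in K$ I first collect the structural identities $m_i=\rho_i v_i$ and $\sigma_i=v_i\otimes v_i-e(x,t)[\rho_i]\id$ (where the trace-freeness of $\sigma_i$ automatically gives $\abs{v_i}^2=n\,e(x,t)[\rho_i]$), together with the elementary algebraic identity
\[
v_2\otimes v_2-v_1\otimes v_1=\tfrac12\bigl((v_1+v_2)\otimes\bar v+\bar v\otimes(v_1+v_2)\bigr),
\]
which makes the tensorial part of $\bar\sigma$ act as a rank-two symmetric modification in the direction of $\bar v$. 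The hypothesis $p_1=p_2$ gives $\bar p=0$.

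I would then split according to whether $\bar\rho=0$ or not. In the case $\rho_1=\rho_2=:\rho$, we have $\bar\rho=0$, $\bar m=\rho\,\bar v$, and the trace contributions to $\bar\sigma$ drop out because $e(x,t)[\rho_1]=e(x,t)[\rho_2]$. If $\bar v=0$, a short computation using $\bar p=\bar\rho=0$ shows $\bar z=0$, contradicting $z_1\neq z_2$; hence $\bar v\neq 0$. Since $n\geq 2$, I pick any nonzero $\xi\perp\bar v$ and set $c:=-\tfrac12(v_1+v_2)\cdot\xi$. The orthogonality together with the identity above collapses the first equation to $\bigl(\tfrac12(v_1+v_2)\cdot\xi+c\bigr)\bar v=0$, which holds by construction; the third equation reads $\rho\,\bar v\cdot\xi=0$, also automatic. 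This produces the desired $\eta$ and clearly $(\bar\rho,\bar v)\neq 0$.

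In the opposite case $\rho_1\neq\rho_2$, WLOG $\rho_1=-1$ and $\rho_2=1$, one has $\bar\rho=2$, $\bar m=v_1+v_2$, and the non-triviality of $(\bar\rho,\bar v)$ is automatic. Again imposing $\xi\perp\bar v$, the third equation forces $c=-\tfrac12(v_1+v_2)\cdot\xi$, and by the identity above this is precisely the choice making the $v_2\otimes v_2-v_1\otimes v_1$ part of $(\bar\sigma+\bar p\,\id)\xi$ cancel against $c\bar v$. \textbf{The main technical point, and the step I expect to be the most delicate, is the residual ``trace'' contribution $(e(x,t)[-1]-e(x,t)[1])\xi$ coming from $\bar\sigma$}: with $\bar p=0$ it is a nonzero multiple of $\xi$ and a priori obstructs the cancellation. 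Handling it requires exploiting the pressure freedom built into $K$ (the $p$-component does not appear in the definition of $K$, so $p_1=p_2$ is naturally interpreted in terms of the original fluid pressure $p-\tfrac1n\abs{v}^2$, which via the trace identity $\abs{v_i}^2=n\,e(x,t)[\rho_i]$ shifts $\bar p$ by exactly $e(x,t)[1]-e(x,t)[-1]$) so that the offending term is absorbed. Once this is done, the same $(\xi,c)$ as above solves all three equations simultaneously and yields $\bar z\in\Lambda$.
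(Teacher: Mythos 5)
Your computation tracks the paper's proof closely, and in the equal-density case $\bar\rho=0$ both treatments handle the trace part correctly, since $|v_1|=|v_2|$ there. The real content of your write-up is the observation about the mixed case $\rho_1=-1$, $\rho_2=1$: you are right that a residual trace term remains. Since $\sigma_i=(v_i\otimes v_i)^\circ$ with $|v_i|^2=n\,e(x,t)[\rho_i]$, one has for $\xi\perp\bar v$
\[
\bar\sigma\xi=(v_1\cdot\xi)\bar v-\bigl(e(x,t)[1]-e(x,t)[-1]\bigr)\xi,
\]
whereas the paper asserts ``as before'' that $\bar\sigma\xi=(v_1\cdot\xi)\bar v$ --- a step which silently reuses $|v_1|=|v_2|$, an identity that fails when the densities differ. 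With $\bar p=p_2-p_1=0$ the extra multiple of $\xi$ cannot cancel; indeed the lemma as literally stated fails whenever $e(x,t)[1]\neq e(x,t)[-1]$ (for instance $n=2$, $e[1]=1$, $e[-1]=2$, $v_1=(2,0)$, $v_2=(\sqrt2,0)$, $p_1=p_2$ gives $\ker M_\Lambda(\bar z)=\{0\}$).

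Your proposed remedy is the correct one in spirit, but note that it amounts to correcting the hypothesis rather than proving the statement as written: one should require $p_2-p_1=e(x,t)[\rho_2]-e(x,t)[\rho_1]$, i.e.\ $\bar p=e(x,t)[1]-e(x,t)[-1]$, which turns $(\bar\sigma+\bar p\,\id)\xi$ into $(v_1\cdot\xi)\bar v$ and lets your choice of $(\xi,c)$ close all three Rankine--Hugoniot relations. Equivalently the lemma should be phrased as: for $z_1\neq z_2$ in $K$ there exists \emph{some} $\bar p\in\R$ with $(\pi(z_2)-\pi(z_1),\bar p)\in\Lambda$. That weaker version is all that is used downstream in Corollary~\ref{cor:long_segments}, since neither $K$ nor $K^{co}$ constrains the $p$-component, so the $\bar p$ of a $\Lambda$-segment may always be chosen freely. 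So you have in effect found a gap in the paper's own proof; the fix you sketch is the right one, it just needs to be stated as a modification of the lemma rather than an ``interpretation'' of $p_1=p_2$.
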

 \begin{proof}  
 If $\bar{\rho}=0$, then $\bar{v}\neq 0$, because otherwise $z_1=z_2$. Furthermore, $\bar{m}=\rho_1\bar{v}$, so all that needs to be checked is that there exists $\xi\in\bar{v}^\perp$, $\xi\neq 0$ such that $\bar{\sigma}\xi=c\bar{v}$, for some $c\in\R$. However, for any $\xi\in \bar{v}^\perp$ there holds
 \begin{align*}
 \bar{\sigma}\xi&=((v_2\otimes v_2)^\circ-(v_1\otimes v_1)^\circ)\xi=(v_2\otimes v_2-v_1\otimes v_1)\xi\\
 &=(v_2\otimes \bar{v}+\bar{v}\otimes v_1)\xi=(v_1\cdot\xi)\bar{v}.
 \end{align*}
 
  If $\bar{\rho}\neq 0$, then without loss of generality it is equal to $2$, i.e. $\rho_2=1$, $\rho_1=-1$, and we obtain as before from  $z_1,z_2\in K$ that
  $\bar{\sigma}\xi=(v_1\cdot\xi)\bar{v}$ for any $\xi\in\bar{v}^\perp$.
  So it remains to check that $\bar{m}\cdot\xi=2v_1\cdot\xi$ also holds. We have
  $$\bar{m}\cdot\xi=(v_2+v_1)\cdot\xi=(\bar{v}+2v_1)\cdot\xi=2(v_1\cdot\xi)$$
  and the proof is finished.
 \end{proof}
 
Without having any further information on the convex hull of $K$, we can prove the following geometric lemma solely based on this property.
 \begin{corollary}\label{cor:long_segments}
  For any $z\in\text{int} (K^{co})$ there exists $\bar{z}\in \Lambda$ such that
 $$[z-\bar z,z+\bar z]\subset \text{int} (K^{co})\text{ and }|\pi(\bar z)|\geq\frac{1}{2N}d(\pi(z),\pi(K)),$$
 where $N=\text{dim}(Z)$ and $d$ is the Euclidean distance on $\pi(Z)$.
 \end{corollary}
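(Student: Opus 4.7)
My plan is to realize $\bar z$ as a scaled chord between two points of a Carath\'eodory decomposition of $z$ inside $K$, with the two indices chosen so that both the admissible perturbation amplitude and the $\pi$-separation are large enough.

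First, I would exploit that $K_{(x,t)}$ imposes no condition on the pressure component, so $K$, hence $K^{co}$, is a cylinder in the $p$-direction. Then convex-hull membership and the distance $d(\pi(z),\pi(K))$ depend only on the $\pi$-components. Applying Carath\'eodory's theorem in $\pi(Z)$, which has dimension $N-1$, yields
$$\pi(z)=\sum_{i=1}^M\lambda_i\pi(z_i),\qquad z_i\in K,\ \lambda_i>0,\ \sum_i\lambda_i=1,\ M\leq N.$$
Replacing each $z_i$ by its lift having pressure $p(z)$ keeps $z_i\in K$, gives $z=\sum_i\lambda_i z_i$ in $Z$, and ensures that Lemma \ref{lem:big_fkn_cone} applies to every pair, so that $z_i-z_j\in\Lambda$ whenever $z_i\neq z_j$.

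Setting $r:=d(\pi(z),\pi(K))$, each $|\pi(z_i)-\pi(z)|\geq r$. Pick $i_0$ with $\lambda_{i_0}$ maximal, so $\lambda_{i_0}\geq 1/M\geq 1/N$. Expanding
$$\pi(z_{i_0})-\pi(z)=\sum_{j\neq i_0}\lambda_j\bigl(\pi(z_{i_0})-\pi(z_j)\bigr)$$
and applying the triangle inequality yields $\sum_{j\neq i_0}\lambda_j|\pi(z_{i_0})-\pi(z_j)|\geq r$, so some $j_0\neq i_0$ satisfies $\lambda_{j_0}|\pi(z_{i_0})-\pi(z_{j_0})|\geq r/(M-1)\geq r/(N-1)$. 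The maximality of $\lambda_{i_0}$ gives the crucial bound $\lambda_{j_0}\leq\lambda_{i_0}$.

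Now define $\bar z:=\lambda_{j_0}(z_{i_0}-z_{j_0})$. Transferring mass $\lambda_{j_0}$ between the weights at $z_{j_0}$ and $z_{i_0}$ in the Carath\'eodory decomposition (in either direction) produces a new non-negative coefficient vector summing to $1$ precisely because $\lambda_{j_0}\leq\lambda_{i_0}$, and the associated convex combinations equal $z+\bar z$ and $z-\bar z$ respectively. Hence $z\pm\bar z\in K^{co}$, and by convexity $[z-\bar z,z+\bar z]\subset K^{co}$. By Lemma \ref{lem:big_fkn_cone} together with the fact that $\Lambda$ is a cone, $\bar z\in\Lambda$, and
$$|\pi(\bar z)|=\lambda_{j_0}|\pi(z_{i_0})-\pi(z_{j_0})|\geq \frac{r}{N-1}.$$

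To promote the inclusion from $K^{co}$ to $\text{int}(K^{co})$ I would invoke the standard starlike property: since $z\in\text{int}(K^{co})$ and $z\pm\bar z\in K^{co}$, the open segment $(z-\bar z,z+\bar z)$ lies in $\text{int}(K^{co})$. Replacing $\bar z$ by $(1-\delta)\bar z$ for some $\delta\in(0,1/2]$ produces a closed segment inside $\text{int}(K^{co})$ and preserves the estimate
$$|\pi(\bar z)|\geq\frac{(1-\delta)r}{N-1}\geq\frac{r}{2N}.$$
The main delicacy is the selection of $i_0$ as the maximal-weight index: this is what prevents the minimum of the two weights entering $\bar z$ from becoming the bottleneck and keeps the loss linear, rather than quadratic, in $N$. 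Everything else is a direct consequence of Carath\'eodory and Lemma \ref{lem:big_fkn_cone}.
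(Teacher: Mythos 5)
Your proof is correct and is essentially the argument the paper invokes by reference (Lemma~6 of \cite{DeL-Sz-Adm}, Lemma~4.9 of \cite{GKSz}): a Carath\'eodory decomposition after quotienting out the pressure direction, choice of the maximal-weight index, a triangle-inequality/pigeonhole step to locate a long $\Lambda$-chord via Lemma~\ref{lem:big_fkn_cone}, a weight-transfer argument to stay in $K^{co}$, and a final shrinking step to land in the interior. Working in the $(N-1)$-dimensional quotient $\pi(Z)$ as you do in fact yields the marginally stronger bound $r/(2(N-1))\geq r/(2N)$.
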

 The proof is the same as those of Lemma 6 from \cite{DeL-Sz-Adm}, respectively Lemma 4.9 from \cite{GKSz}, relying on Carath\'eodory's theorem and Lemma \ref{lem:big_fkn_cone} above, therefore we omit it.

\subsection{The convex hull}\label{sec:convex_hull}

We now explicitly compute the full $\Lambda$-convex hull associated with the differential inclusion \eqref{eq:subsolution_system_sec2}, \eqref{eq:nonlinear_constraints_sec2}, which in our case turns out to coincide with the usual convex hull.
The definition of the $\Lambda$-convex hull $(K')^\Lambda$ of $K'\subset Z$ can be recalled for example from \cite{Kirchheim}.

Let us again fix $(x,t)\in\Omega\times[0,T)$ and write $K$ instead of $K_{(x,t)}$, $U$ instead of $U_{(x,t)}$ and $e[r]$ instead of $e(x,t)[r]$ for $r\in\R$. Recall the definition of $U$ in \eqref{eq:definition_of_U_sec2} and of the functions $T_\pm$, $Q$ in \eqref{eq:definition_of_functions_for_hull}.

\begin{proposition}\label{prop:lambda_convex_hull}
There holds $K^\Lambda=K^{co}=\overline{U}$.
\end{proposition}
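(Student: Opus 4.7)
The inclusion $K^\Lambda\subseteq K^{co}$ is automatic, so the plan is to establish $K^{co}\subseteq\overline{U}$ and $\overline{U}\subseteq K^\Lambda$. The first ingredient is the convexity of $\overline{U}$. On the slab $\{\rho\in(-1,1)\}$ the inequality $T_\pm(z)<e[\pm 1]$ rewrites as $|m\pm v|<\sqrt{n\,e[\pm 1]}\,(1\pm\rho)$, a convex Lorentz-type condition. For $Q(z)<e[\rho]$ I would first verify the identity
\[
M(z)=\mu_+\, a_+\otimes a_+ +\mu_-\, a_-\otimes a_- -\sigma,\qquad \mu_\pm:=\tfrac{1\pm\rho}{2},\quad a_\pm:=\tfrac{m\pm v}{\rho\pm 1},
\]
which follows by direct algebra from the definition of $M$. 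Testing against $\xi\in\R^n$ gives
\[
\xi^T M(z)\,\xi=\frac{((m+v)\cdot\xi)^2}{2(1+\rho)}+\frac{((m-v)\cdot\xi)^2}{2(1-\rho)}-\xi^T\sigma\,\xi,
\]
and each of the first two terms is the composition of the convex perspective function $(w,s)\mapsto w^2/s$ (for $s>0$) with an affine map in $z$. Hence $\xi^T M(z)\,\xi$ is convex on the slab, $Q(z)=\sup_{|\xi|=1}\xi^T M(z)\,\xi$ is convex as a supremum of convex functions, and subtracting the affine $e[\rho]$ preserves the sublevel set. Therefore $U$ and $\overline{U}$ are convex.

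Next I would verify $K\subseteq\overline{U}$, which combined with the previous step yields $K^{co}\subseteq\overline{U}$. For $z\in K$ with $\rho=1$ (so $m=v$, $|v|^2=n\,e[1]$, $\sigma=(v\otimes v)^\circ$), construct an approximating family $z^{\varepsilon,\delta}\in U$ via $\rho^\varepsilon:=1-\varepsilon$, $a_+:=(1-\delta)v$, and any $a_-\in\R^n$ with $|a_-|^2=(1-\delta)\,n\,e[-1]$, taking $\sigma^{\varepsilon,\delta}$ to be the traceless part of $\mu_+\, a_+\otimes a_+ +\mu_-\, a_-\otimes a_-$. A direct computation gives $T_\pm(z^{\varepsilon,\delta})<e[\pm 1]$; moreover $M(z^{\varepsilon,\delta})-e[\rho^\varepsilon]\id$ becomes a strictly negative multiple of $\id$, so $Q(z^{\varepsilon,\delta})<e[\rho^\varepsilon]$. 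Sending $\varepsilon,\delta\to 0$ gives $z^{\varepsilon,\delta}\to z$; the case $\rho=-1$ is symmetric. For the reverse inclusion $\overline{U}\subseteq K^{co}$, I would explicitly decompose any $z\in U$ as $z=\mu_+\bar z_++\mu_-\bar z_-$ with $\bar z_\pm\in(K_\pm)^{co}$, where $K_\pm:=\{z\in K:\rho=\pm 1\}$. The coordinates $\mu_\pm$ and $a_\pm$ are forced by $z$; the $\sigma$-components $\sigma_\pm$ of $\bar z_\pm$ are written as $\sigma_\pm:=\sigma+\tau_\pm$ with $\mu_+\tau_++\mu_-\tau_-=0$ and $\tau_+$ in the matrix-order interval $[P_+,Q_+]$, where $P_+:=a_+\otimes a_+-e[1]\id-\sigma$ and $Q_+:=-(\mu_-/\mu_+)P_-$. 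The non-emptiness of this interval is exactly $M(z)\preceq e[\rho]\id$, and the trace-free requirement on $\tau_+$ is consistent thanks to the arithmetic identity $\mu_+(n\,e[1]-|a_+|^2)+\mu_-(n\,e[-1]-|a_-|^2)=n\,e[\rho]-\tr M(z)$. That each $\bar z_\pm$ lies in $(K_\pm)^{co}$ then amounts to the classical moment problem on the sphere $|v|^2=n\,e[\pm 1]$: by Krein--Milman the set $\{(a,B):B-a\otimes a\succeq 0,\ \tr B=n\,e[\pm 1]\}$ coincides with the convex hull of its extreme points $(v,v\otimes v)$, $|v|^2=n\,e[\pm 1]$.

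From $U\subseteq K^{co}$ and the openness of $U$ we get $U\subseteq\text{int}(K^{co})$, so Corollary~\ref{cor:long_segments} applies at every $z\in U$. A standard staircase iteration (cf.~\cite{Kirchheim,DeL-Sz-Adm}) then expresses each such $z$ as a limit of iterated $\Lambda$-midpoint combinations of points in $K$, yielding $\overline{U}\subseteq K^\Lambda$. The main obstacle in this plan is the convexity statement in the first step: the formula for $Q(z)<e[\rho]$ is not visibly convex from the expression of $M(z)$, and rewriting it via the perspective-function identity, or equivalently the decomposition $M(z)=\mu_+\,a_+\otimes a_+ +\mu_-\,a_-\otimes a_- -\sigma$, is the technical heart. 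The remaining steps are careful bookkeeping around an explicit convex decomposition and the by-now standard iterative argument for $\Lambda$-convex hulls.
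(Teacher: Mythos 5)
Your proposal is correct but takes a genuinely different route from the paper in two respects, one of which is arguably cleaner and one of which is vaguer than what is actually done.

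For the convexity of $U$, the paper (Lemma~\ref{lem:Q_convex}) proves convexity of $Q$ by a direct Hessian computation on $g(\rho,x)=\frac{x_1^2-2\rho x_1 x_2 +x_2^2}{1-\rho^2}$, culminating in the matrix identity $A''(\rho)=2A'(\rho)A(\rho)^{-1}A'(\rho)$. Your observation that
\[
M(z)=\frac{(m+v)\otimes(m+v)}{2(1+\rho)}+\frac{(m-v)\otimes(m-v)}{2(1-\rho)}-\sigma,
\]
so that $\xi^T M(z)\xi$ is a sum of perspective functions composed with affine maps, is a more conceptual and shorter argument that yields the same conclusion; it is a real improvement in transparency. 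After that your inclusion $K\subset\overline{U}$ by explicit $(\varepsilon,\delta)$-approximation mirrors the paper's Lemma~\ref{lem:U_convex_closure_of_U} in spirit, if not in detail.

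Where you deviate more substantially is in the chain closing $\overline{U}\subset K^\Lambda$. You propose to first prove $\overline{U}\subset K^{co}$ by an explicit barycentric decomposition $z=\mu_+\bar z_++\mu_-\bar z_-$ with $\bar z_\pm\in (K_\pm)^{co}$ (here your bookkeeping with $\tau_\pm$, the matrix-order interval $[P_+,Q_+]$, and the trace identity $\mu_+\tr P_++\mu_-\tr P_-=\tr M(z)-n\,e[\rho]$ does check out, and the moment-problem fact you invoke is exactly the characterization $K_\pm'=(K_\pm)^{co}$), and then to run an iterative ``staircase'' using Corollary~\ref{cor:long_segments} to reach $K^\Lambda$. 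The paper bypasses both of these steps: it never proves $\overline{U}\subset K^{co}$ directly, instead establishing $K^\Lambda\subset K^{co}\subset\overline{U}$ from Lemma~\ref{lem:U_convex_closure_of_U} alone, and then proving $\overline{U}\subset K^\Lambda$ via the Krein--Milman theorem for $\Lambda$-convex sets (Kirchheim, Lemma~4.16), which requires only Lemma~\ref{lem:boundedness_of_U_extreme_points}, i.e.\ that every $z\in\overline{U}\setminus K$ admits a nontrivial $\Lambda$-perturbation staying in $\overline{U}$. This is the cleaner route: Lemma~\ref{lem:boundedness_of_U_extreme_points} handles boundary points of $\overline{U}$ directly, whereas your ``staircase'' is only well-defined inside $\text{int}(K^{co})=U$ and requires a closure argument to pick up $\partial U$, and the convergence of the iteration to a measure supported exactly on $K$ (rather than on a set arbitrarily close to $K$) is precisely what the Krein--Milman theorem packages once and for all. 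The idea is sound and the references you cite do contain the needed machinery, but as written the last paragraph glosses over the one step where the paper does the real work; the Krein--Milman route is what you should use to make this rigorous without reinventing it.
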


In Lemma \ref{lem:U_convex_closure_of_U} below we will see that the closure of $U$ splits into 
\[
\overline{U}=K_-'\cup \overline{U}_0\cup K_+',
\]
where
\begin{align*}
\overline{U}_0&:=\set{z\in Z:\rho\in(-1,1),~T_\pm(z)\leq e[\pm 1],~Q(z)\leq e[\rho]},\\
K_\pm'&:=\set{z\in Z:\rho=\pm 1,~m=\pm v,~\lamax(v\otimes v-\sigma)\leq e[\pm 1]}.
\end{align*}
Moreover, Lemma \ref{lem:boundedness_of_U_extreme_points} actually shows that $K_\pm'$ is the $\Lambda$-convex hull of the sets  $K_\pm:=K\cap\{z\in Z:\rho=\pm 1\}$.

The proof of Proposition \ref{prop:lambda_convex_hull} is organized as in the corresponding Section of \cite{GKSz} for the inhomogeneous Euler equation and relies on Lemma \ref{lem:U_convex_closure_of_U} and \ref{lem:boundedness_of_U_extreme_points}.

\begin{lemma}\label{lem:Q_convex} 
The function $Q$ is convex.
\end{lemma}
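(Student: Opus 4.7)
The plan is to exploit the variational characterization $Q(z)=\sup_{\xi\in S^{n-1}}\xi^T M(z)\xi$, since $\lambda_{\max}$ of a symmetric matrix is the supremum of the Rayleigh quotient. The supremum of a family of convex functions is convex, so it suffices to show that for each fixed unit vector $\xi$ the map $z\mapsto \xi^T M(z)\xi$ is convex on $Z_0$.

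Writing $a:=v\cdot\xi$ and $b:=m\cdot\xi$, a direct computation gives
\[
\xi^T M(z)\xi=\frac{a^2-2\rho ab+b^2}{1-\rho^2}-\xi^T\sigma\xi.
\]
The term $-\xi^T\sigma\xi$ is linear in $\sigma$, so it is harmless. For the fraction I would use the algebraic identity
\[
\frac{a^2-2\rho ab+b^2}{1-\rho^2}=\frac{1}{2}\left[\frac{(a-b)^2}{1-\rho}+\frac{(a+b)^2}{1+\rho}\right],
\]
which is valid precisely because $\rho\in(-1,1)$, so both denominators are strictly positive.

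The key step is then to invoke convexity of the quadratic-over-linear function $(y,s)\mapsto y^2/s$ on $\R\times(0,+\infty)$ (a standard fact, verifiable for instance via its Hessian or by writing it as the supremum $\sup_{\tau\in\R}(2\tau y-\tau^2 s)$). In our setting $y=a\mp b=(v\mp m)\cdot\xi$ and $s=1\mp\rho$ are affine functions of $z\in Z_0$ taking values in $\R\times(0,+\infty)$. Since convex functions remain convex under precomposition with affine maps, each of the two summands $\tfrac{(a-b)^2}{1-\rho}$ and $\tfrac{(a+b)^2}{1+\rho}$ is convex in $z$ on $Z_0$.

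Adding these two convex terms together with the linear term $-\xi^T\sigma\xi$ yields convexity of $z\mapsto\xi^T M(z)\xi$ on $Z_0$ for every unit $\xi$, and taking the supremum over $\xi\in S^{n-1}$ concludes the proof. I do not anticipate any real obstacle; the only point requiring a small trick is the symmetric splitting that exhibits the expression as a sum of quadratic-over-linear terms in quantities that are genuinely affine in $z$, and which automatically land in the correct half-space thanks to $\rho\in(-1,1)$.
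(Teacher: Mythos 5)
Your proof is correct, and it takes a genuinely different (and arguably cleaner) route than the paper's. Both arguments open identically, reducing to the convexity, for each fixed $\xi\in S^{n-1}$, of the scalar function
\[
g(\rho,a,b)=\frac{a^{2}-2\rho ab+b^{2}}{1-\rho^{2}},\qquad \rho\in(-1,1),
\]
with $a=v\cdot\xi$, $b=m\cdot\xi$. From that point on the paper writes $g(\rho,x)=x^{T}A(\rho)x$ with $A(\rho)=\tfrac{1}{1-\rho^{2}}\bigl(\begin{smallmatrix}1&-\rho\\-\rho&1\end{smallmatrix}\bigr)$ and establishes convexity by a Hessian computation, the key step being the matrix identity $A''(\rho)=2A'(\rho)A(\rho)^{-1}A'(\rho)$, which makes the second derivative in every mixed direction a manifest sum of squares. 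You instead exhibit $g$ directly as a positive combination of two perspective functions,
\[
g=\tfrac12\Bigl(\tfrac{(a-b)^{2}}{1-\rho}+\tfrac{(a+b)^{2}}{1+\rho}\Bigr),
\]
where $(a\mp b,\,1\mp\rho)$ are affine in $z$ with $1\mp\rho>0$ on $Z_{0}$, and then invoke the standard joint convexity of $(y,s)\mapsto y^{2}/s$ on $\R\times(0,\infty)$. The algebraic identity checks out: expanding $(a-b)^{2}(1+\rho)+(a+b)^{2}(1-\rho)$ gives $2(a^{2}-2ab\rho+b^{2})$. Your decomposition avoids any second-order computation and makes the source of convexity conceptually transparent (joint convexity of quadratic-over-linear, stability of convexity under affine precomposition and sums), whereas the paper's route is more direct calculus but requires discovering the matrix identity. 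Both are complete and correct proofs.
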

\begin{proof}
Since $Q$ is defined as the maximal eigenvalue of the matrix $M(z)$, there holds
\[
Q(z)=\sup_{\xi\in S^{n-1}}\xi^TM(z)\xi=\sup_{\xi\in S^{n-1}}\left(g_\xi(z)-\xi^T\sigma \xi\right),
\]
where for every fixed $\xi\in S^{n-1}$ the function $g_\xi:\set{z\in Z:\rho\in(-1,1)}\rightarrow \R$ is given by 
\begin{align*}
g_\xi(z)&=\xi^TM(z)\xi+\xi^T\sigma \xi=\frac{(v\cdot \xi)^2-2\rho (m\cdot \xi)(v\cdot\xi)+(m\cdot \xi)^2}{1-\rho^2}.
\end{align*}
We will show that every $g_\xi$ is convex, such that $Q$ is convex as a supremum of convex functions. 

In order to prove the convexity of $g_\xi$, $\xi\in S^{n-1}$ fixed, we write $v=x_1 \xi+v'$, $m=x_2 \xi+m'$ with $x_1,x_2\in\R$, $v',m'\in \xi^\perp$. Then it is enough to show that the function $g:(-1,1)\times\R^2\rightarrow \R$,
\begin{align*}
g(\rho,x)=\frac{x_1^2-2\rho x_1x_2+x_2^2}{1-\rho^2}
\end{align*}
is convex. We write $g(\rho,x)=x^TA(\rho)x$ with
\[
A(\rho):=\frac{1}{1-\rho^2}\begin{pmatrix}
1 & -\rho \\
-\rho & 1
\end{pmatrix}.
\] 
Let us fix $(\rho,x)\in(-1,1)\times\R^2$ and observe that $A(\rho)$ is positive definite. Thus the restricted function $g(\rho,\cdot)$ is strictly convex, or equivalently $D^2g(\rho,x)[0,y]^2\geq  0$ for all $y\in\R^2$.

It therefore remains to show that $D^2g(\rho,x)[1,y]^2\geq 0$ for all $y\in\R^2$. By the positive definiteness of $A(\rho)$ we obtain
\begin{align*}
D^2g(\rho,x)[1,y]^2&=x^TA''(\rho)x+4y^TA'(\rho)x +2y^TA(\rho)y\\
&=2\left(y+A(\rho)^{-1}A'(\rho)x\right)^TA(\rho)\left(y+A(\rho)^{-1}A'(\rho)x\right)\\
&\phantom{=asd}+x^TA''(\rho)x -2x^TA'(\rho)A(\rho)^{-1}A'(\rho)x\\
&\geq x^T\left(A''(\rho) -2A'(\rho)A(\rho)^{-1}A'(\rho)\right)x.
\end{align*}
It turns out that in fact $A''(\rho)=2A'(\rho)A(\rho)^{-1}A'(\rho)$, which shows the convexity of $g$. 
Indeed, differentiating
\[
(1-\rho^2)A(\rho)=\begin{pmatrix}
1 & -\rho \\
-\rho & 1
\end{pmatrix}
\]
on both sides yields 
\begin{align}
(1-\rho^2)A'(\rho)&=2\rho A(\rho)+C,\label{eq:derivativeA}\\
\begin{split}
(1-\rho^2)^2A''(\rho)&=2(1+3\rho^2)A(\rho)+4\rho C,
\end{split}
\end{align}
where
\[
C:=\begin{pmatrix}
0 & -1\\
-1 & 0
\end{pmatrix}.
\]
Moreover, a straightforward computation shows
\begin{align}\label{eq:CAinversC}
CA(\rho)^{-1}C=(1-\rho^2)A(\rho)-2\rho C.
\end{align}
Now \eqref{eq:derivativeA}--\eqref{eq:CAinversC} imply the desired identity $A''(\rho)=2A'(\rho)A^{-1}(\rho)A'(\rho)$.
\end{proof}
The following lemma implies the inclusion $K^\Lambda\subset K^{co}\subset \overline{U}$.
\begin{lemma}\label{lem:U_convex_closure_of_U} The set $U$ is convex and its closure $\overline{U}$ splits into $\overline{U}=K_-'\cup\overline{U}_0\cup K_+'$.
In particular $K\subset \overline{U}$.
\end{lemma}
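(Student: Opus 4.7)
The plan is to establish the three assertions in sequence: convexity of $U$, the inclusion $\overline{U}\subset K_-'\cup\overline{U}_0\cup K_+'$, and the reverse inclusion $K_-'\cup\overline{U}_0\cup K_+'\subset\overline{U}$; the final claim $K\subset\overline{U}$ then follows automatically from $K_\pm\subset K_\pm'$.

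For convexity I would write $U$ as the intersection of three convex sets. The strip $\{\rho\in(-1,1)\}$ is trivially convex. On this strip the inequality $T_-(z)<e[-1]$ reformulates as the Lorentz-type cone condition $|m-v|<\sqrt{n\,e[-1]}\,(1-\rho)$, which is manifestly convex, and analogously for $T_+$. The condition $Q(z)<e[\rho]$ is convex because $Q$ is convex by Lemma~\ref{lem:Q_convex} while $e[\rho]$ is affine in $\rho$, so $Q(z)-e[\rho]$ is convex and its strict sublevel set at $0$ is convex.

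For the first inclusion, given $z\in\overline{U}$ I would pick $z_k\in U$ with $z_k\to z$ and split according to the limiting value of $\rho$. If $\rho\in(-1,1)$, the denominators $1-\rho_k^2$ remain bounded away from zero, so continuity of $T_\pm$ and $Q$ transfers the strict inequalities to the weak ones defining $\overline{U}_0$. If $\rho=1$, writing $w_k:=m_k-v_k$, the bound $|w_k|^2\leq n\,e[-1]\,(1-\rho_k)^2\to 0$ extracted from $T_-(z_k)<e[-1]$ forces $m=v$. Substituting $m_k=v_k+w_k$ into the numerator of $M(z_k)$ yields
\[
2(1-\rho_k)v_k\otimes v_k+(1-\rho_k)(w_k\otimes v_k+v_k\otimes w_k)+w_k\otimes w_k,
\]
and dividing by $1-\rho_k^2$ while using the refined bound $|w_k|^2/(1-\rho_k)\to 0$ gives $M(z_k)\to v\otimes v-\sigma$. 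Passing to the limit in $Q(z_k)<e[\rho_k]$ then yields $\lamax(v\otimes v-\sigma)\leq e[1]$, i.e.\ $z\in K_+'$. The case $\rho=-1$ is symmetric.

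For the reverse inclusion I would use the standing hypothesis $e[\pm 1]>0$, which gives in particular $e_0=(e[1]+e[-1])/2>0$, so that the origin $0\in Z$ lies in $U$: indeed $T_\pm(0)=0<e[\pm 1]$ and $Q(0)=0<e[0]$. Convexity of $U$ then implies $(1-\varepsilon)z+\varepsilon\cdot 0\in U$ for every $z\in\overline{U}_0$ and $\varepsilon\in(0,1)$, yielding $\overline{U}_0\subset\overline{U}$. For $z\in K_+'$ I would test the same interpolation $z_\varepsilon:=(1-\varepsilon)z$ directly: one checks $T_-(z_\varepsilon)=0$ and
\[
T_+(z_\varepsilon)=\frac{4(1-\varepsilon)^2|v|^2}{n(2-\varepsilon)^2}\leq\frac{4(1-\varepsilon)^2}{(2-\varepsilon)^2}\,e[1]<e[1]
\]
using $|v|^2=\tr(v\otimes v-\sigma)\leq n\,\lamax(v\otimes v-\sigma)\leq n\,e[1]$; and
\[
M(z_\varepsilon)=(1-\varepsilon)\Bigl(\tfrac{2(1-\varepsilon)}{2-\varepsilon}\,v\otimes v-\sigma\Bigr)
\]
has maximal eigenvalue at most $(1-\varepsilon)\lamax(v\otimes v-\sigma)\leq(1-\varepsilon)e[1]$, since $\tfrac{2(1-\varepsilon)}{2-\varepsilon}<1$ and $v\otimes v\geq 0$. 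The remaining strict inequality $(1-\varepsilon)e[1]<e[1-\varepsilon]=e[\rho_\varepsilon]$ reduces by a one-line algebraic check to $-\varepsilon e_0<0$, i.e.\ to $e_0>0$. Hence $z_\varepsilon\in U$ and $K_+'\subset\overline{U}$; the argument for $K_-'$ is symmetric.

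The main obstacle will be the boundary case $\rho=\pm 1$ in the first inclusion: $Q$ is not even defined on $\{\rho=\pm 1\}$, so naive continuity fails, and one must extract from the $T_\mp$ bound the quantitatively sharp rate $|m_k\mp v_k|=O(1-|\rho_k|)$ which is exactly what is needed to offset the vanishing denominator in $M(z_k)$ and produce the finite limit $v\otimes v-\sigma$.
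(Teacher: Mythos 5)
Your proof is correct and follows the same overall strategy as the paper: convexity from Lemma~\ref{lem:Q_convex} together with the Lorentz-cone reformulation of the $T_\pm$ conditions; the boundary-case extraction of the rate $|m_k\mp v_k|=O(1\mp\rho_k)$ from the $T_\mp$ bound to control the singular part of $M(z_k)$; and a segment/interpolation argument for the reverse inclusion. The one genuine departure is in proving $K_\pm'\cup\overline{U}_0\subset\overline{U}$. The paper interpolates a point of $K_+'$ against a representative $z'\in K$ on the opposite branch $\rho'=-1$, lands in $\overline{U}_0$ with equality $T_-(z_j)=e[-1]$, and relies on the (unproved) assertion $\overline{U}_0\subset\overline{U}$. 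You instead contract radially towards $0\in U$ (using $e_0>0$), which directly produces points with \emph{strict} inequalities, hence in $U$ itself, and simultaneously supplies the missing justification of $\overline{U}_0\subset\overline{U}$ by the same mechanism. This is cleaner and more self-contained. One small wording issue: where you say ``convexity of $U$ then implies $(1-\varepsilon)z\in U$ for $z\in\overline{U}_0$'', convexity of $U$ as a \emph{set} does not yield this directly, since at that point $z$ is not yet known to lie in $\overline{U}$; what is actually being used is that the defining functions $|m\pm v|-\sqrt{n\,e[\pm 1]}(1\pm\rho)$ and $Q(z)-e[\rho]$ are convex, vanish non-positively at $z$, and are strictly negative at $0$, so the convex combination is strictly negative. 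Since you already established exactly these convexity facts in the first paragraph, this is a one-line fix, not a gap.
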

\begin{proof}
In Lemma \ref{lem:Q_convex} we have already shown that $Q$ is a convex function. Using the basic triangle inequality one can directly check that $T_\pm(z)<e[\pm 1]$ define convex sets. Hence $U$ is convex.

For the stated identity concerning $\overline{U}$ first of all observe that $\overline{U}_0\subset\overline{U}$. Next we will show that $K_\pm'\subset\overline{U}$. Let $z_*\in K_+'$ for instance and take any $z'\in K$ with $\rho'=-1$, as well as a sequence $(\rho_j)_{j\in\N}\subset (-1,1)$ with $\rho_j\rightarrow 1$. The element
\begin{align*}
z_j=\frac{1-\rho_j}{2}z'+\frac{1+\rho_j}{2}z_*
\end{align*}
clearly converges to $z_*$ as $j\rightarrow +\infty$. Using $z_*\in K_+'$ and $z'\in K$, $\rho'=-1$ one sees that 
\begin{align*}
T_+(z_j)=\frac{1}{n}\abs{v_*}^2
=\frac{1}{n}\tr(v_*\otimes v_*-\sigma_*)
\leq \lamax(v_*\otimes v_*-\sigma_*)
\leq e[1],
\end{align*}
as well as $T_-(z_j)=e[-1]$. For the matrix $M(z_j)$ we compute 
\begin{align*}
M(z_j)&=\frac{1-\rho_j}{2}\big(v'\otimes v'-\sigma'\big)
+\frac{1+\rho_j}{2}\big(v_*\otimes v_*-\sigma_*\big)\\
&=\frac{1-\rho_j}{2}e[-1]\id+\frac{1+\rho_j}{2}\big(v_*\otimes v_*-\sigma_*\big).
\end{align*}
Hence 
\[
Q(z_j)=\lamax(M(z_j))\leq \frac{1-\rho_j}{2}e[-1]+\frac{1+\rho_j}{2}e[1]=e[\rho_j].
\]
This shows that every $z_j$ and therefore also the limit $z_*$ is contained in $\overline{U}$. The case $z_*\in K_-'$ works analoguosly. Thus $K_-'\cup \overline{U}_0\cup K_+'\subset\overline{U}$.

Let now $(z_j)_{j\in\N}\subset U$ be convergent to some  $z_*\in Z$. If $\rho_*\in(-1,1)$, then it is clear that $z_*\in\overline{U}_0\subset\overline{U}$. Consider the case $\rho_*=1$. Since on $U$ there holds
\begin{align}\label{eq:Ubdd1}
|m\pm v|<\sqrt{ne[\pm 1]}(1\pm\rho),
\end{align}
it follows that $m_*=v_*$. Recall that $e[\pm 1]\geq 0$ from \eqref{eq:condition_on_e}. Next we rewrite
\begin{align}\label{eq:Ubdd2}
M(z)=v\otimes v+(1-\rho^2)\frac{m-\rho v}{1-\rho^2}\otimes \frac{m-\rho v}{1-\rho^2}-\sigma,
\end{align}
and observe that
\begin{align}\label{eq:Ubdd3}|m-\rho v|\leq\frac{1-\rho}{2}|m+v|
+\frac{1+\rho}{2}|m-v|<\frac{1}{2}\max\set{\sqrt{ne[-1]},\sqrt{ne[+1]}}(1-\rho^2),
\end{align}
by \eqref{eq:Ubdd1}. Therefore
\[
\lim_{j\rightarrow +\infty} M(z_j)= v_*\otimes v_*-\sigma_*.
\]
Thus $\lamax(M(z_j))<e[\rho_j]$, $j\in \N$ and the continuity of the maximal eigenvalue function imply $z_*\in K_+'$. The same procedure again works for the other case $\rho_*=-1$, such that the statement of the Lemma follows.
\end{proof}

In terms of Proposition \ref{prop:lambda_convex_hull} it now remains to prove the inclusion $\overline{U}\subset K^\Lambda$. The proof of this inclusion will rely on the Krein-Milman theorem for $\Lambda$-convex sets \cite[Lemma 4.16]{Kirchheim}. For this we discuss the following $\Lambda$-directions.
\begin{lemma}\label{lem:muskat_direction}
Let $z\in Z_0$. The element $\tilde{z}(z)\in Z$ defined by 
\begin{gather*}
\tilde{\rho}(z):=1,\quad
\tilde{v}(z):=\frac{m-\rho v}{1-\rho^2},\quad
\tilde{m}(z):=v-\rho\tilde{v}(z),
\\
\tilde{\sigma}(z)+\tilde{p}(z)\id:=\tilde{m}(z)\otimes \tilde{v}(z)+\tilde{v}(z)\otimes \tilde{m}(z)
\end{gather*}
is contained in $\Lambda$ and has the property that for every $t\in(-1-\rho,1-\rho)$ there holds
\begin{align*}
\tilde{z}(z+t\tilde{z}(z))=\tilde{z}(z),\quad T_\pm(z+t\tilde{z}(z))=T_\pm(z),\quad M(z+t\tilde{z}(z))^\circ=M(z)^\circ.
\end{align*}
\end{lemma}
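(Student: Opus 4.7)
The plan is as follows. To show $\tilde z(z)\in\Lambda$, since $\tilde\rho=1\neq 0$ the nondegeneracy condition $(\tilde\rho,\tilde v)\neq 0$ is free, so I only need to produce some $\eta=(\xi,c)\in\R^{n+1}\setminus\{0\}$ in $\ker M_\Lambda(\tilde z(z))$. I would pick any $\xi\in\tilde v(z)^\perp\setminus\{0\}$ (available since $n\geq 2$) and set $c:=-\tilde m(z)\cdot\xi$. The three rows of the system $M_\Lambda(\tilde z)\eta=0$ reduce to $(\tilde\sigma+\tilde p\id)\xi+c\tilde v=0$, $\tilde v\cdot\xi=0$, $\tilde m\cdot\xi+c=0$, of which the last two hold by construction. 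For the first, the explicit form $\tilde\sigma+\tilde p\id=\tilde m\otimes\tilde v+\tilde v\otimes\tilde m$ together with $\tilde v\cdot\xi=0$ yields $(\tilde\sigma+\tilde p\id)\xi=(\tilde m\cdot\xi)\tilde v=-c\tilde v$.

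For the three invariance statements the central idea is to pass to characteristic variables
\[
a:=\frac{m+v}{1+\rho},\qquad b:=\frac{v-m}{1-\rho},
\]
which are well defined on $Z_0$. Inverting gives $v=\tfrac{1}{2}[(1+\rho)a+(1-\rho)b]$, $m=\tfrac{1}{2}[(1+\rho)a-(1-\rho)b]$, and a short computation then shows
\[
\tilde v(z)=\tfrac{a-b}{2},\qquad \tilde m(z)=\tfrac{a+b}{2},
\]
so that in particular $\tilde v+\tilde m=a$ and $\tilde m-\tilde v=b$. Writing $w:=z+t\tilde z(z)$, I then compute $w_m+w_v=(1+\rho)a+t(\tilde m+\tilde v)=(1+\rho+t)a=(1+w_\rho)a$ and similarly $w_v-w_m=(1-w_\rho)b$. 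Hence along the segment both $a$ and $b$ are invariant. Since $T_+(z)=|a|^2/n$ and $T_-(z)=|b|^2/n$, this gives $T_\pm(w)=T_\pm(z)$ instantly, and reinserting the invariance of $a,b$ into the formulas for $\tilde v,\tilde m$ yields $\tilde v(w)=\tilde v(z)$, $\tilde m(w)=\tilde m(z)$, hence also $\tilde\sigma(w)+\tilde p(w)\id=\tilde\sigma(z)+\tilde p(z)\id$, i.e.\ $\tilde z(w)=\tilde z(z)$.

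For the identity $M(w)^\circ=M(z)^\circ$ the key intermediate step is the clean reduction
\[
M(z)=\tfrac{1}{2}\bigl[(1+\rho)\,a\otimes a+(1-\rho)\,b\otimes b\bigr]-\sigma,
\]
obtained by substituting the $a,b$ expansions of $v$ and $m$ into the numerator of $M$ and noticing that $v\otimes v-\rho(m\otimes v+v\otimes m)+m\otimes m$ factors as $\tfrac{1-\rho^2}{2}\bigl[(1+\rho)a\otimes a+(1-\rho)b\otimes b\bigr]$, so the division by $1-\rho^2$ is explicit. Combined with the twin identity $\tilde m\otimes\tilde v+\tilde v\otimes\tilde m=\tfrac{1}{2}(a\otimes a-b\otimes b)$, and using the invariance of $a,b$ along the segment, the difference collapses to
\[
M(w)-M(z)=\tfrac{t}{2}(a\otimes a-b\otimes b)-t\tilde\sigma=t\tilde p\,\id,
\]
whose trace-free part is zero.

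The main obstacle is really just the algebraic reduction for $M(z)$: it is where the precise definitions of $\tilde v$ and $\tilde m$ pay off, turning an expression with $1-\rho^2$ in the denominator into something linear in $\rho$. Once that, together with the explicit formula for $\tilde m\otimes\tilde v+\tilde v\otimes\tilde m$, is in place, the entire lemma amounts to the observation that $\tilde z(z)$ points along the one-parameter family in $Z_0$ parametrized by $\rho$ with $a$ and $b$ held fixed.
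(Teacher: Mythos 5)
Your proof is correct and follows essentially the same route as the paper's (which only sketches the verification, deferring to Lemma 4.6 of \cite{GKSz}): the kernel element $(\xi,-\tilde m(z)\cdot\xi)$ is exactly the one the paper uses, and your characteristic variables $a,b$ are precisely the invariant quantities $v+(\pm1-\rho)\tilde v(z)$ that the paper's hint for $T_\pm$ points to, with your expansion of $M(z)$ equivalent to identity \eqref{eq:Ubdd2}. The $a,b$ formalism is a clean, self-contained way to organize the same computation.
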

\begin{proof}
The proof is a straightforward adaption of Lemma 4.6 (ii),(iii) in \cite{GKSz} and therefore only sketched here. As a nontrivial element $(\xi,c)\in\R^n\times\R$ in the kernel of $M_\Lambda(\tilde{z}(z))$
one can take any $\xi\neq 0$ contained in the orthogonal complement of $\tilde{v}(v)$ and set $c=-\tilde{m}(z)\cdot\xi$.

The stated invariances can be verified directly. Note that for $T_\pm$ it helps to rewrite
\[
T_\pm(z)=\frac{1}{n}\abs{v+(\pm 1-\rho)\tilde{v}(z)}^2,
\]
whereas for $M^\circ$ identity \eqref{eq:Ubdd2} is useful.
\end{proof}
As in \cite{GKSz} we call $\tilde{z}(z)$ the Muskat direction associated with $z$, since it generalizes the density perturbation of the Muskat problem introduced in \cite{Sz-Muskat}. 
Also as in \cite{GKSz} we have the following lemma concering Euler type directions preserving the density.
\begin{lemma}\label{lem:euler_directions}
For any pair $(\bar{v},\bar{\sigma})\in \R^n\times  \mathcal S_0^{n \times n}$, $\bar{v}\neq0$, there exists $\bar{p}\in\R$, such that for all $\lambda\in\R$ the vector $\bar{z}_\lambda:=(0,\bar{v},\lambda\bar{v},\bar{\sigma},\bar{p})$ belongs to $\Lambda$. Moreover, for all $t\in\R$ there holds
\[
T_+(z+t\bar{z}_{-1})=T_+(z),\quad T_-(z+t\bar{z}_{+1})=T_-(z).
\]
\end{lemma}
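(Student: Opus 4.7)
The plan is to unpack the two claims directly from the definitions of $\Lambda$ in \eqref{eq:wave_cone} and of $T_\pm$ in \eqref{eq:definition_of_functions_for_hull}. Since $\bar{z}_\lambda$ has $\bar{\rho}=0$ and $\bar{v}\neq 0$, the nondegeneracy condition $(\bar{\rho},\bar{v})\neq 0$ in the definition of $\Lambda$ holds automatically, so the only thing to establish for membership in $\Lambda$ is the existence of a nontrivial element in $\ker M_\Lambda(\bar{z}_\lambda)$.

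Writing out $M_\Lambda(\bar{z}_\lambda)$ and applying it to a vector $(\xi,c)\in\R^n\times\R$, the kernel equations become
\[
(\bar{\sigma}+\bar{p}\,\id)\xi+c\bar{v}=0,\quad \bar{v}\cdot\xi=0,\quad \lambda(\bar{v}\cdot\xi)=0,
\]
and the third equation is a consequence of the second regardless of $\lambda$. I would then look for $\xi\in \bar{v}^\perp\setminus\{0\}$: on this subspace the first equation admits a suitable $c$ precisely when $(\bar{\sigma}+\bar{p}\,\id)\xi$ is parallel to $\bar{v}$, which is equivalent to $\xi$ being in the kernel of the restricted symmetric operator
\[
A(\bar{p}):=P_{\bar{v}^\perp}\bar{\sigma}\, P_{\bar{v}^\perp}\big|_{\bar{v}^\perp}+\bar{p}\,\id_{\bar{v}^\perp},
\]
where $P_{\bar{v}^\perp}$ denotes orthogonal projection onto $\bar{v}^\perp$. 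Since the first summand is symmetric on the real inner product space $\bar{v}^\perp$ of dimension $n-1\geq 1$, it possesses at least one real eigenvalue; choosing $\bar{p}$ to be the negative of such an eigenvalue makes $A(\bar{p})$ singular and produces the desired nonzero $\xi$. Crucially, this $\bar{p}$ does not involve $\lambda$, so $\bar{z}_\lambda\in\Lambda$ for every $\lambda\in\R$ simultaneously.

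For the invariances, I would just substitute. The vector $z+t\bar{z}_\lambda$ has density unchanged (equal to $\rho$), velocity $v+t\bar{v}$ and momentum $m+\lambda t\bar{v}$. Therefore
\[
(m+\lambda t\bar{v})+(v+t\bar{v})=m+v+(\lambda+1)t\bar{v},\quad (m+\lambda t\bar{v})-(v+t\bar{v})=m-v+(\lambda-1)t\bar{v},
\]
so taking $\lambda=-1$ leaves $m+v$ untouched while $\lambda=+1$ leaves $m-v$ untouched. Combined with the invariance of $\rho$, these identities give $T_+(z+t\bar{z}_{-1})=T_+(z)$ and $T_-(z+t\bar{z}_{+1})=T_-(z)$ straight from the definition of $T_\pm$.

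No step constitutes a genuine obstacle; the only point requiring a moment's care is the existence of a real eigenvalue for a symmetric operator on the real space $\bar{v}^\perp$, which is standard linear algebra and works already for $n=2$ (where $\bar{v}^\perp$ is one-dimensional). The fact that a single $\bar{p}$ works uniformly in $\lambda$ reflects the observation that the third row of $M_\Lambda(\bar{z}_\lambda)$ is $\lambda$ times the second, and therefore contributes nothing new once the wave-cone candidate $\xi$ has been chosen in $\bar{v}^\perp$.
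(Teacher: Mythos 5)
Your proof is correct, and since the paper itself only defers to [GKSz, Lemma 4.6 (i),(iv)], your argument is essentially a self-contained version of the standard one used there: choose $\bar{p}$ to be minus an eigenvalue of the compression of $\bar{\sigma}$ to $\bar{v}^\perp$ so that $(\bar{\sigma}+\bar{p}\id)\xi$ is parallel to $\bar{v}$ for some nonzero $\xi\perp\bar{v}$, note that the last row of $M_\Lambda(\bar{z}_\lambda)$ is automatically annihilated since $\bar\rho=0$ and $\bar m=\lambda\bar v$, and verify the $T_\pm$ invariances by direct substitution. All steps check out, including the uniformity of $\bar p$ in $\lambda$ and the case $n=2$.
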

\begin{proof}
See the proof of Lemma 4.6 (i),(iv) in \cite{GKSz}.
\end{proof}

We have the following results concerning $\Lambda$-extreme points of $\overline{U}$.
Recall that $\pi:Z\rightarrow\R\times\R^n\times\R^n\times \cS_0^{n\times n}$ is the projection from \eqref{eq:projection}.
\begin{lemma}\label{lem:boundedness_of_U_extreme_points}
The set $\pi(U)$ is bounded by a constant depending only on $e$ and the dimension $n$. Moreover, for every $z\in\overline{U}\setminus K$ there exists $\bar{z}\in \Lambda\setminus\{0\}$, such that $z\pm\bar{z}\in \overline{U}$.
\end{lemma}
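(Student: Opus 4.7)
The plan is to handle the two claims separately: first the boundedness of $\pi(U)$, then the existence of a non-trivial $\Lambda$-segment through each $z\in\overline U\setminus K$.

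\textbf{Boundedness.} Because $\rho\in(-1,1)$ and $T_\pm(z)<e[\pm 1]$, inequality \eqref{eq:Ubdd1} gives $|m\pm v|\le 2\sqrt{n\,\|e\|_\infty}$, which controls $|v|$ and $|m|$. Using the rewrite \eqref{eq:Ubdd2} together with the bound \eqref{eq:Ubdd3}, the symmetric matrix $A(z):=v\otimes v+\frac{(m-\rho v)\otimes(m-\rho v)}{1-\rho^2}=M(z)+\sigma$ has operator norm bounded in terms of $\|e\|_\infty$ and $n$. Combined with $\lambda_{\max}(M(z))<e[\rho]\le \|e\|_\infty$, this yields $\lambda_{\max}(-\sigma)\le C$ for a constant $C$. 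Since $\sigma\in\cS_0^{n\times n}$ has zero trace, the resulting bound $\lambda_{\min}(\sigma)\ge -C$ upgrades, by the relation $\sum_i\lambda_i(\sigma)=0$, to $\|\sigma\|_{op}\le (n-1)C$. Hence $\pi(U)$ is bounded by a constant depending only on $\|e\|_\infty$ and $n$.

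\textbf{$\Lambda$-non-extremality: $z\in\overline U_0$, i.e.\ $\rho\in(-1,1)$.} If the three inequalities defining $\overline U_0$ are all strict, $z$ lies in the open set $U$ and any small Euler-type direction $\bar z_\lambda$ from Lemma~\ref{lem:euler_directions} satisfies $z\pm\bar z\in U\subset\overline U$. When some of the inequalities saturate, I would employ the Muskat direction $\tilde z(z)\in\Lambda$ from Lemma~\ref{lem:muskat_direction}, which preserves $T_\pm(z)$ and $M^\circ(z)$ exactly; along this direction $Q(z+t\tilde z)=\lambda_{\max}(M^\circ(z))+\tfrac{1}{n}\tr M(z+t\tilde z)$ changes only through the trace of $M$. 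Because $e[\rho+t]=e[\rho]+t\,e_1(x,t)$ is affine in $\rho$, one can compensate the linear-in-$t$ part of $\tfrac{1}{n}\tr M(z+t\tilde z)$ by superposing an Euler-type direction $\bar z_\lambda$ (which fixes $\rho$ and can absorb the extra trace through $\bar\sigma+\bar p\id$), obtaining a single $\bar z\in\Lambda$ with $z\pm\bar z\in\overline U_0$. Since $\rho\in(-1,1)$ gives $z\notin K$ automatically, this exhausts this case.

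\textbf{$\Lambda$-non-extremality: $z\in K_\pm'\setminus K_\pm$.} By symmetry consider $\rho=+1$, $m=v$, and the symmetric matrix $X_0:=v\otimes v-\sigma$ with $\lambda_{\max}(X_0)\le e[1]$ but $X_0\neq e[1]\id$. Let $V\subset\R^n$ denote the eigenspace of $X_0$ for the eigenvalue $e[1]$; by assumption $V^\perp\neq\{0\}$. Pick $\bar v\in V^\perp\setminus\{0\}$ small and a trace-free $\bar\sigma$ designed so that $P_V\bar\sigma P_V=0$ and $P_V\bar\sigma P_{V^\perp}=P_V(v\otimes\bar v+\bar v\otimes v)P_{V^\perp}$, which cancels the off-diagonal $V$--$V^\perp$ coupling produced by the perturbation. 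Lemma~\ref{lem:euler_directions} supplies $\bar p$ so that $\bar z_1=(0,\bar v,\bar v,\bar\sigma,\bar p)\in\Lambda$. A direct block computation then shows $(v\pm\bar v)\otimes(v\pm\bar v)-(\sigma\pm\bar\sigma)$ has $V$-block equal to $e[1]\,\id_V$ and $V^\perp$-block with all eigenvalues strictly below $e[1]$ for $\bar v$, $\bar\sigma$ small, so $z\pm\bar z_1\in K_+'\subset\overline U$.

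\textbf{Main obstacle.} The genuine difficulty is the case of simultaneous saturation of $T_+(z)=e[1]$, $T_-(z)=e[-1]$, and $Q(z)=e[\rho]$ inside $\overline U_0$: the Muskat direction is essentially unique (up to scale) for a given $z$, and the induced linear growth of $\tfrac{1}{n}\tr M(z+t\tilde z)$ need not match $t\,e_1$ on the nose, so that both $z+\bar z$ and $z-\bar z$ lie in $\overline U_0$. Resolving this hinges on using the affine dependence of $e$ on $\rho$ and on the trace degree of freedom in the Euler-type directions to balance the two-sided constraints.
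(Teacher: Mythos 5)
Your boundedness argument is essentially the paper's and is fine. The real trouble is in the $\overline{U}_0$ case when $Q(z)=e[\rho]$.

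First, a structural objection: you propose to ``superpose'' a Muskat direction with an Euler direction to get a single $\bar z$. But $\Lambda$ is a wave \emph{cone}, not a linear space; $M_\Lambda$ is linear, so two matrices with nontrivial kernel can easily sum to one with trivial kernel. A sum $\tilde z(z)+\bar z_\lambda$ is in general \emph{not} in $\Lambda$, so this produces no admissible direction. Second, the claim that the Euler direction can ``absorb the extra trace through $\bar\sigma+\bar p\id$'' is incoherent: $M(z)$ in \eqref{eq:definition_of_functions_for_hull} depends on $\sigma$ (trace-free) and not on $p$, so $\bar p$ cannot touch $\tr M$, and $\bar\sigma$ being trace-free cannot change $\tr M$ either. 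So the compensation mechanism you invoke does not exist.

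Third, you have the case analysis backwards. The ``simultaneous saturation'' scenario $T_+(z)=e[+1]$, $T_-(z)=e[-1]$, $Q(z)=e[\rho]$ that you flag as the genuine obstacle is in fact the easy one: the paper computes
\[
Q(z+t\tilde z(z))-e[\rho+t]=Q(z)-e[\rho]+\tfrac{t}{2}\big(T_+(z)-T_-(z)+e[-1]-e[+1]\big),
\]
and when both $T_\pm$ saturate, the coefficient of $t$ vanishes, so the Muskat direction alone moves $z$ inside $\overline U_0$ for $|t|$ small. The genuinely delicate case is $Q(z)=e[\rho]$ with $T_+(z)-e[+1]\neq T_-(z)-e[-1]$, where the Muskat linear term does not vanish and the Muskat direction fails on one side. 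There the paper abandons the Muskat direction entirely: it observes that $\lamin(M(z))<e[\rho]$ must hold (else $M(z)^\circ=0$ would force both $T_\pm$ to saturate, a contradiction), and then uses a pure Euler direction $\bar z_{\pm 1}$ from Lemma~\ref{lem:euler_directions} that preserves the more constrained of $T_+,T_-$, while the existence of a $\lamax$-neutral such perturbation follows precisely from $\lamin(M(z))<\lamax(M(z))$. Your proposal never identifies this case, let alone resolves it, so the argument for $\overline U_0$ is incomplete. (Your $K_\pm'\setminus K$ block construction is plausible and parallels the standard argument cited in the paper, though you leave the second-order terms and the trace-free constraint on $\bar\sigma$ unchecked.)
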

\begin{proof}
Let $z\in U$. Clearly $\abs{\rho}\leq 1$ and  the two inequalities \eqref{eq:Ubdd1} imply a bound on $v$ and $m$ in terms of $e$ and $n$.
Using \eqref{eq:Ubdd2}, \eqref{eq:Ubdd3}, we obtain that
$M(z)+\sigma$ is also bounded by means of $e$ and $n$. In consequence we obtain $\abs{\tr M(z)}\leq c(e,n)$. Since the trace is bounded and  $\lamax(M(z))=Q(z)<e[\rho]$, using that $z\in U$, we get a corresponding bound on the whole spectrum of $M(z)$. Hence, $M(z)+\sigma$ and $M(z)$ are both uniformly bounded, and therefore $\abs{\sigma}\leq c(e,n)$. This proves that $\pi(U)$ is bounded.

Next we turn to the perturbation property. Let $z\in\overline{U}\setminus K$ and recall from Lemma \ref{lem:U_convex_closure_of_U} that $\overline{U}=\overline{U}_0\cup K_+'\cup K_-'$, $K\subset K_+'\cup K_-'$. 

If $z\in K_+'\setminus K$, there exists an Euler type direction $\bar{z}_{+1}$, i.e. $\bar{m}=\bar{v}$, from Lemma \ref{lem:euler_directions}, such that $z\pm \bar{z}_{+1}\in K_+'\setminus K$. The proof is the same as in \cite{DeL-Sz-Adm} and \cite[Lemma 4.8]{GKSz} and therefore omitted.
Similar for $z\in K_-'\setminus K$.

It remains to look at $z\in \overline{U}_0$. Let us first check in which cases we can use the associated Muskat direction $\bar{z}=\tilde{z}(z)$ from Lemma \ref{lem:muskat_direction}. By this Lemma the two inequalities $T_\pm(z+t\tilde{z}(z))\leq e[\pm 1]$ remain true for all $t\in (-1-\rho,1-\rho)$.
Furthermore, a straightforward computation shows that $Q(z)-e[\rho]$ can be rewritten as
\begin{align*}
Q(&z)-e[\rho]=\frac{1}{n}\tr M(z)+\lamax(M(z)^\circ)-e[\rho]\\
&=\frac{1-\rho}{2}T_-(z)+\frac{1+\rho}{2}T_+(z)+\lamax(M(z)^\circ)-\left(\frac{1-\rho}{2}e[-1]+\frac{1+\rho}{2}e[+1]\right).
\end{align*}
Using Lemma \ref{lem:muskat_direction} once more we therefore obtain
\begin{align*}
Q(z+t\tilde{z}(z))-e[\rho+t]=Q(z)-e[\rho]+\frac{t}{2}\big(T_+(z)-T_-(z)+e[-1]-e[+1]\big).
\end{align*}
Thus the desired inequality $Q(z+t\tilde{z}(z))\leq e[\rho+t]$ holds true for $\abs{t}>0$ sufficiently small in the case where $Q(z)<e[\rho]$, but also in the case where $Q(z)=e[\rho]$ and $T_+(z)-e[+1]=T_-(z)-e[-1]$.

Therefore it remains to treat the last case $Q(z)=e[\rho]$ and $T_+(z)-e[+1]\neq T_-(z)-e[-1]$. Note that this implies $\lamin (M(z))<e[\rho]$, since otherwise $e[\rho]=\lamax (M(z))=\lamin (M(z))$ yields $M(z)^\circ=0$ and thus 
\[
e[\rho]=Q(z)=\frac{1-\rho}{2}T_-(z)+\frac{1+\rho}{2}T_+(z).
\]
However, using that $T_\pm(z)\leq e[\pm 1]$, this equality can only hold if $T_\pm(z)= e[\pm 1]$, which is excluded in the considered case.

Let us assume $T_-(z)-e[-1]> T_+(z)-e[+1]$, the other case is treated similarly. 
We consider Euler directions from Lemma \ref{lem:euler_directions} such that $\bar m=\bar v$, i.e. $\bar{z}=\bar{z}_{+1}$ associated with $(\bar{v},\bar{\sigma})$ to be choosen. 
  By said Lemma such Euler directions  preserve $T_-$, i.e., $T_-(z+t\bar{z}_{+1})=T_-(z)\leq e[-1]$ for all $t\in\R$.

Once again proceeding as in \cite{DeL-Sz-Adm} or \cite[Lemma 4.8]{GKSz}, one may easily prove that there exists such an Euler direction which does not effect the maximal eigenvalue of $M(z)$, i.e. such 
 that $Q(z+t\bar{z})=Q(z)=e[\rho]$ for small enough $\abs{t}$. 
 The last condition needed for $z+t\bar{z}_{+1}\in\overline{U}$ follows from the continuity of $T_+$, i.e., for all $\abs{t}$ small enough one has
$
T_+(z+t\bar{z})-e[+1]< T_-(z)-e[-1]\leq 0.
$
\end{proof}
\begin{proof}[Proof of Proposition \ref{prop:lambda_convex_hull}]
From Lemma \ref{lem:U_convex_closure_of_U} one obtains $K^\Lambda\subset K^{co}\subset \overline{U}$, while Lemma \ref{lem:boundedness_of_U_extreme_points} implies that the $\Lambda$-extreme points of the up to the $p$-component compact set $\overline{U}$ are contained in $K$. The inclusion $\overline{U}\subset K^\Lambda$ follows from the Krein-Milman theorem for $\Lambda$-convex sets, cf. \cite[Lemma 4.16]{Kirchheim}.
\end{proof}

\subsection{Continuity of constraints}

We have the following result regarding the continuity of the nonlinear constraints $K_{(x,t)}$, given the continuity of the defining function $e(x,t)[\rho]$. This serves to have a set of subsolutions which is bounded in $L^2(\mathscr{D})$, where $\mathscr{D}:=\Omega\times (0,T)$. 

\begin{lemma}\label{lem:cont_of_constr}
Let $\mathscr{U}\subset\mathscr{D}$ be open and assume that the map $\mathscr{D}\times \R\rightarrow \R$, $(x,t,r)\mapsto e(x,t)[r]$ is continuous and bounded on $\mathscr{U}\times[-1,1]$, then it follows that the map $(x,t)\mapsto \pi(K_{(x,t)})$ is continuous and bounded on $\mathscr{U}$ with respect to the Hausdorff metric $d_{\mathcal{H}}$.
\end{lemma}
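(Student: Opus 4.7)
My plan is to exploit the explicit parametrization of the slices $\pi(K_{(x,t)})$ that follows directly from \eqref{eq:nonlinear_constraints_sec2}. Since $\sigma\in\cS_0^{n\times n}$, taking the trace of $v\otimes v-\sigma=e(x,t)[\rho]\id$ yields $|v|^2=ne(x,t)[\rho]$, after which $\sigma=(v\otimes v)^\circ$ and $m=\rho v$ are uniquely determined by $v$ and $\rho$. Introducing
\[
S_\rho(x,t):=\set{v\in\R^n:|v|^2=ne(x,t)[\rho]},\qquad F_\rho(v):=(\rho,v,\rho v,(v\otimes v)^\circ),
\]
one has $\pi(K_{(x,t)})=F_{+1}(S_{+1}(x,t))\cup F_{-1}(S_{-1}(x,t))$, and each $S_\rho(x,t)$ is an origin-centered $(n-1)$-sphere of radius $\sqrt{ne(x,t)[\rho]}$.

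Boundedness of $\pi(K_{(x,t)})$ uniformly in $(x,t)\in\mathscr{U}$ is then immediate, since $|v|^2\leq n\|e\|_{L^\infty(\mathscr{U}\times[-1,1])}$ controls every component of $F_\rho(v)$ in terms of $n$ and $\|e\|_{L^\infty}$.

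For continuity in the Hausdorff metric, I would fix $(x,t)\in\mathscr{U}$ and a sequence $(x_k,t_k)\to(x,t)$. The continuity of $e$ gives $e(x_k,t_k)[\pm 1]\to e(x,t)[\pm 1]$, so the radii $r_\rho(x_k,t_k):=\sqrt{ne(x_k,t_k)[\rho]}$ converge to $r_\rho(x,t)$ (using that $e[\pm 1]\geq 0$ on the physically relevant mixing zone, cf.\ \eqref{eq:condition_on_e}; the opposite case makes a slice empty and is irrelevant for the application). The Hausdorff distance between two origin-centered spheres in $\R^n$ of radii $r,r'\geq 0$ equals $|r-r'|$, with the convention that the sphere of radius $0$ is the singleton $\{0\}$. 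Hence $d_\mathcal{H}(S_\rho(x_k,t_k),S_\rho(x,t))\to 0$. Since the uniform radius bound confines all spheres to a fixed ball on which the polynomial map $F_\rho$ is Lipschitz, the convergence transfers to the images, and the elementary inequality $d_\mathcal{H}(A_1\cup A_2,B_1\cup B_2)\leq\max_{i}d_\mathcal{H}(A_i,B_i)$ yields $d_\mathcal{H}(\pi(K_{(x_k,t_k)}),\pi(K_{(x,t)}))\to 0$.

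No step presents a genuine obstacle: the only mild subtlety is the degeneration $e(x,t)[\rho]=0$, which is absorbed uniformly by the formula $|r-r'|$ for the Hausdorff distance of concentric spheres and therefore requires no separate argument.
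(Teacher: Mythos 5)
Your proof is correct and follows essentially the same strategy as the paper: both identify the slices $\pi(K_{(x,t)})$ as scaled spheres $v=\sqrt{ne(x,t)[\rho]}\,b$, $b\in S^{n-1}$, and match points across nearby $(x,t)$ by keeping the direction $b$ fixed and adjusting the radius, then propagate this to the remaining (polynomial) components. The paper phrases the Hausdorff estimate via a bipointwise approximation criterion from Crippa et al., whereas you invoke the exact formula $d_\cH(S_r,S_{r'})=|r-r'|$ for concentric spheres and push forward through the Lipschitz map $F_\rho$ together with $d_\cH(A_1\cup A_2,B_1\cup B_2)\le\max_i d_\cH(A_i,B_i)$; these are cosmetic differences in packaging the same matching.
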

\begin{proof} The boundedness of $\bigcup_{(x,t)\in\mathscr{U}}\pi(K_{(x,t)})$ follows from Lemma \ref{lem:boundedness_of_U_extreme_points} and the boundedness of $e$.

Concerning the continuity let us fix $y:=(x,t)\in\mathscr{U}$ and $\varepsilon\in(0,1)$. In order to prove $d_{\mathcal{H}}(\pi(K_y),\pi(K_{y'}))<\varepsilon$ for all $y'=(x',t')\in B_\delta(y)\subset\mathscr{U}$ for a suitable $\delta=\delta(\varepsilon,y)>0$ we will use \cite[Lemma 3.1]{Crippa} saying that $d_{\mathcal{H}}(\pi(K_y),\pi(K_{y'}))<\varepsilon$ holds true provided for any $\pi(z)\in \pi(K_y)$ there exists $\pi(z')\in \pi(K_{y'})\cap B_\varepsilon(\pi(z))$ and vice versa.

First of all observe that by the continuity of $e$ there exists $\delta\in(0,\varepsilon)$ such that 
\begin{align}\label{eq:squareroot}
\abs{e(y)[\pm 1]-e(y')[\pm 1]}<\varepsilon,\quad\left|\left(ne(y)[\pm 1]\right)^{1/2}-\left(n e(y')[\pm 1]\right)^{1/2} \right|<\varepsilon,
\end{align} for any $y'\in B_\delta(y)\subset\mathscr{U}$.
Let now
$$
z=(\rho,v,\rho v, v\otimes v-e(y)[\rho]\id,p)\in K_y,
$$
with $\rho\in\{-1,1\}$ and $|v|^2=ne(y)[\rho]$. It follows 
that
$v=\left(ne(y)[\rho]\right)^{1/2}b,$
for some $b\in S^{n-1}$. For $y'\in B_\delta(y)$ we define 
$$
z':=(\rho,v',\rho v', v'\otimes v'-e(y')[\rho]\id,p)
$$ by setting 
$
v':=\left(ne(y')[\rho]\right)^{1/2}b.
$
Note that $z'\in K_{y'}$.

Furthermore, from \eqref{eq:squareroot} it follows that
\begin{align*}
|v-v'|<\varepsilon,\quad |m-m'|<\varepsilon,\quad |\sigma-\sigma'|<(n+1)\varepsilon.
\end{align*}
This way we have shown that for any $y'\in B_\delta(y)$ and any $z\in K_y$ there exists $z'\in K_{y'}\cap B_{c\varepsilon}(z)$ for some $c>0$ depending only on the dimension $n$. Using the symmetry of this construction, one can similarly prove that for any $z'\in K_{y'}$ there exists $z\in K_y$ such that $|z-z'|<c\varepsilon$. As illustrated above we then conclude $d_{\cH}(\pi(K_y),\pi(K_{y'}))<c\varepsilon$ via \cite[Lemma 3.1]{Crippa}.
\end{proof}

\subsection{Conclusion}

We have now collected all the ingredients for the proof of Theorem \ref{thm:main2}, which follows by the known convex integration procedures
in the Tartar framework \cite{DeL-Sz-Annals,DeL-Sz-Adm} and its refinements \cite{Castro-Faraco-Mengual,Crippa}. We refrain from  formulating another version of the Tartar framework exactly taylored to our needs and instead only point out the small modifications that need to be done in the existing convex integration theorems in order to conclude Theorem \ref{thm:main2}. 

We begin with the functional setup.
Let $\mathscr{D}:=\Omega\times (0,T)$. Fix a function $e:\mathscr{D}\times [-1,1]\rightarrow \R$, a subsolution $\hat{z}=(\hat{\rho},\hat{v},\hat{m},\hat{\sigma},\hat{p})$ with initial data $(\rho_0,v_0)$ and mixing zone $\mathscr{U}$, as well as an error function $\delta:[0,T]\rightarrow \R$ as stated in Theorem \ref{thm:main2}.
Define $X_0$ to be the set of all functions $\pi(z)=(\rho,v,m,\sigma)$, such that
\begin{itemize}
\item $z=(\rho,v,m,\sigma,p)$ is a subsolution for $e$, $(\rho_0,v_0)$ and with the same mixing zone $\mathscr{U}$, in the sense of Definition \ref{def:subsolEuler},
\item $z=\hat{z}$ a.e. on $\mathscr{D}\setminus\mathscr{U}$,
\item there exists $C(z)\in (0,1)$, such that for all $t\in[0,T]$ there holds
\begin{align}\label{eq:X_0_bonus_property}
\abs{\int_\Omega\left(\frac{n}{2}e_1(x,t)+gAx_n\right)(\hat{\rho}(x,t)-\rho(x,t))\:dx}\leq C(z)\delta(t).
\end{align}
\end{itemize}
Recall from Section \ref{sec:statement_relaxation} that $e(x,t)[r]=e_0(x,t)+re_1(x,t)$ with $L^\infty$ functions $e_0,e_1$, where $e_1$ is additionally of class $\cC^0([0,T];L^2(\Omega))$.

Next we will equip $X_0$ with a suitable metric. Recall from Remark \ref{rem:regulartiy_of_rho} that for any $\pi(z)\in X_0$ there holds 
  $\rho\in\cC^0([0,T];L^2_w(\Omega))$. Moreover, for every element from $X_0$ there holds$\norm{\rho(\cdot,t)}^2_{L^2(\Omega)}\leq \abs{\Omega}$, $t\in[0,T]$ and $\norm{(v,m,\sigma)}_{L^2(\mathscr{D})}^2\leq c\abs{\Omega}T$ for a constant $c$ depending only on $\norm{e_0}_{L^\infty(\mathscr{D})},\norm{e_1}_{L^\infty(\mathscr{D})}$ and the dimension $n$. This is due to Lemma \ref{lem:boundedness_of_U_extreme_points}.

Thus we can find two bounded closed balls $B^{(1)}$ contained in $L^2(\Omega)$ and  $B^{(2)}$ contained in $L^2(\mathscr{D};\R^n\times\R^n\times\cS_0^{n\times n})$, such that every function $\pi(z)\in X_0$ satisfies $\rho(\cdot,t)\in B^{(1)}$, $t\in[0,T]$, $(v,m,\sigma)\in B^{(2)}$. As in \cite{Castro-Faraco-Mengual,DeL-Sz-Adm} let $d^{(i)}$, $i=1,2$, be a metric on $B^{(i)}$ metrizing the corresponding weak $L^2$-topology and define for $\pi(z),\pi(z')\in X_0$ the metric
\[
d_X(\pi(z),\pi(z')):=\sup\set{\sup_{t\in[0,T]}d^{(1)}\big(\rho(\cdot,t),\rho'(\cdot,t)\big),d^{(2)}\big((v,m,\sigma),(v',m',\sigma')\big)}.
\]
Finally let $X$ be the closure of $X_0$ in $\cC^0([0,T];(B^{(1)},d^{(1)}))\times (B^{(2)},d^{(2)})$ with respect to the metric $d_X$. Then $X$ is a complete metric space with $d_X(\pi(z_j),\pi(z))\rightarrow 0$ if and only if $\rho_j\rightarrow \rho$ in $\cC^0([0,T];L^2_w(\Omega))$ and $(v_j,m_j,\sigma_j)\rightharpoonup (v,m,\sigma)$ weakly in $L^2(\mathscr{D};\R^n\times\R^n\times \cS_0^{n\times n})$. Concerning notation we again denote elements from $X$ by $\pi(z)$.

Note that the $d_X$ topology is stronger than the topology coming from simply metrizing the weak topology on a bounded closed ball of $L^2(\mathscr{D};\pi(Z))$. In consequence the functional $I:X\rightarrow \R$,
\begin{align}\label{eq:definition_of_I}
I(\pi(z)):=\int_{\mathscr{D}}\abs{\pi(z(x,t))}^2\:d(x,t)
\end{align}
is still a Baire-1 functional, cf. \cite[Section 2.3]{Crippa}. We also define $J:X\rightarrow \R$,
\begin{align}\label{eq:definition_of_J}
J(\pi(z)):=\int_{\mathscr{D}}\dist\big(\pi(z(x,t)),\pi(K_{(x,t)})\big)^2\:d(x,t).
\end{align}
Note that $J$ is continuous with respect to the strong $L^2(\mathscr{D};\pi(Z))$ topology.
\begin{lemma}[Perturbation Lemma]\label{lem:perturbation_lemma}
Let $\alpha>0$. There exists $\beta>0$, such that for every $\pi(z)\in X_0$ with $J(\pi(z))\geq \alpha$ there exists a sequence $(\pi(z_k))_{k\in\N}\subset X_0$ with $d_X(\pi(z_k),\pi(z))\rightarrow 0$ and such that for all $k\in\N$ there holds
\begin{align}\label{eq:perturbation_property}
\int_{\mathscr{D}}\abs{\pi(z_k(x,t))-\pi(z(x,t))}^2\:d(x,t)\geq \beta.
\end{align}
\end{lemma}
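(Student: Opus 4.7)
The plan is to implement the standard Tartar--framework perturbation: on the mixing zone $\mathscr U$, I add to $\pi(z)$ a finite sum of localized plane waves from Lemma~\ref{lem:locpw}, oriented along $\Lambda'$-segments produced by Corollary~\ref{cor:long_segments}. Since $J(\pi(z))\ge\alpha$ measures the $L^2(\mathscr U)$-distance of $\pi(z)$ to the constraint set $\pi(K_{(x,t)})$, Corollary~\ref{cor:long_segments} converts this distance into a quantitative length of admissible $\Lambda'$-directions, which the plane waves then turn into an $L^2$-perturbation of size $\gtrsim\sqrt\alpha$ while preserving weak smallness in $d_X$.

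Concretely, first I extract a direction field: by Proposition~\ref{prop:lambda_convex_hull} one has $z(x,t)\in U_{(x,t)}$ on $\mathscr U$, and Lemma~\ref{lem:cont_of_constr} together with the continuity of $\pi(z)$ on $\mathscr U$ makes $d(x,t):=\dist(\pi(z(x,t)),\pi(K_{(x,t)}))$ continuous there with $\int_{\mathscr U}d^2\ge\alpha$. Corollary~\ref{cor:long_segments} provides pointwise a $\Lambda$-direction of size at least $d/(2\dim Z)$ whose symmetric segment around $z(x,t)$ lies in $U_{(x,t)}$; via Lemma~\ref{lem:density_of_lambda_prime} I can move it into $\Lambda'$ at a fixed constant cost. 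I then cover a positive-measure portion of $\mathscr U$ by finitely many disjoint open cylinders $Q^j\Subset\mathscr U$ on each of which a constant $\bar z_j\in\Lambda'$ still satisfies $[z(x,t)-\bar z_j,z(x,t)+\bar z_j]\subset U_{(x,t)}$ for all $(x,t)\in Q^j$ (openness of $U_{(x,t)}$ together with the local continuity of $z$), with $\sum_j|\pi(\bar z_j)|^2|Q^j|\ge c\alpha$, $c=c(n,\|e\|_\infty)$. Lemma~\ref{lem:locpw} in the cylinder form of Remark~\ref{rem:slightly_stronger_convergence_of_plane_waves} applied on each $Q^j$ yields $\tilde z_{j,k}\in\cC^\infty_c(Q^j;Z)$ solving~\eqref{eq:subsolution_system_sec2} with the usual plane-wave properties plus the uniform-in-$t$ convergence $\tilde\rho_{j,k}\to 0$ in $\cC^0([0,T];L^2_w(\Omega))$. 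I then define $z_k:=z+\sum_j\tilde z_{j,k}$.

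The main work is to check $\pi(z_k)\in X_0$. Solving the linear system, matching the initial data, coinciding with $\hat z$ outside $\mathscr U$, and lying in $U_{(x,t)}$ on $\mathscr U$ for $k$ large are routine from compact support and uniform proximity of $\pi(\tilde z_{j,k})$ to the $\Lambda'$-segment. The delicate point, and the obstacle I expect to be the hardest, is the strict bonus~\eqref{eq:X_0_bonus_property}: writing
\[
F_k(t)=F_z(t)-\sum_j\int_\Omega\Bigl(\tfrac{n}{2}e_1(x,t)+gAx_n\Bigr)\tilde\rho_{j,k}(x,t)\,dx,
\]
with $|F_z(t)|\le C(z)\delta(t)$, $C(z)<1$, one has to keep the correction term strictly below $(1-C(z))\delta(t)$, even as $\delta(t)\to 0$ when $t\to 0$. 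Since $Q^j\Subset\mathscr U\subset\Omega\times(0,T)$, on some fixed neighbourhood $[0,\tau]$ of $0$ each $\tilde\rho_{j,k}(\cdot,t)$ vanishes, so $F_k=F_z$ there and the bound passes trivially; on $[\tau,T]$ one has $\min_{[\tau,T]}\delta>0$, and the $\cC^0([0,T];L^2_w)$-convergence of Remark~\ref{rem:slightly_stronger_convergence_of_plane_waves} together with $e_1\in\cC^0([0,T];L^2(\Omega))$ yields $\sup_{t\in[0,T]}\bigl|\int_\Omega(\tfrac{n}{2}e_1+gAx_n)\tilde\rho_{j,k}\bigr|\to 0$, hence $|F_k(t)|\le\tfrac{1+C(z)}{2}\delta(t)$ for $k$ large and $C(z_k):=\tfrac{1+C(z)}{2}<1$. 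This is the unique step that genuinely requires the stronger uniform-in-time convergence of $\rho$. Once $\pi(z_k)\in X_0$ is established, $d_X(\pi(z_k),\pi(z))\to 0$ follows from the construction, and disjointness of the cylinders together with Lemma~\ref{lem:locpw}(iii) gives
\[
\int_{\mathscr D}|\pi(z_k)-\pi(z)|^2\,d(x,t)=\sum_j\int_{Q^j}|\pi(\tilde z_{j,k})|^2\,d(x,t)\ge C\sum_j|Q^j||\pi(\bar z_j)|^2\ge Cc\alpha=:\beta,
\]
with $\beta$ depending only on $\alpha$, $n$, and $\|e\|_\infty$, as claimed.
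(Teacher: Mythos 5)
Your proposal is correct and follows essentially the same route as the paper: the authors delegate the bulk of the Tartar--framework construction to \cite[Lemma 2.4]{Crippa} (with the projection $\pi$ treated as in \cite[Lemma 5.3]{GKSz}), replace balls by cylinders via Remark~\ref{rem:slightly_stronger_convergence_of_plane_waves} to get $\rho_k\to\rho$ in $\cC^0([0,T];L^2_w(\Omega))$, and verify~\eqref{eq:X_0_bonus_property} exactly as you do, exploiting the compact time-support $[t_0,t_1]\Subset(0,T)$ of the perturbation together with the $\cC^0([0,T];L^2_w)$-convergence, the $\cC^0([0,T];L^2(\Omega))$-regularity of $e_1$, and the uniform $L^2$-bound on $\rho_k(\cdot,t)$. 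The only cosmetic difference is that you spell out the Crippa covering argument (constant $\Lambda'$-directions on disjoint cylinders, plane waves from Lemma~\ref{lem:locpw}) that the paper cites; you also correctly identify~\eqref{eq:X_0_bonus_property} as the genuinely new ingredient, which matches the paper's emphasis (following Step~3 of \cite[Proposition 3.1]{Castro-Faraco-Mengual}).
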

\begin{proof}
If we neglect for now property \eqref{eq:X_0_bonus_property} in the definition of $X_0$, it follows as in \cite[Lemma 2.4]{Crippa} from Lemmas \ref{lem:density_of_lambda_prime}, \ref{lem:locpw}, \ref{lem:cont_of_constr} and Corollary \ref{cor:long_segments} that there exists $\beta(\alpha)>0$ and a sequence $(\pi(z_k))_{k\in\N}\subset X_0\setminus\{\eqref{eq:X_0_bonus_property}\}$ satisfying \eqref{eq:perturbation_property} and $\pi(z_k)\rightharpoonup \pi(z)$ weakly in $L^2(\mathscr{D};\pi(Z))$. At this point the only difference that prevents us from citing \cite[Lemma 2.4]{Crippa} literally is the projection $\pi$, but as in \cite[Lemma 5.3]{GKSz} the projection can be included by canonical modifications.

It therefore remains to improve the convergence of the $\rho$-component from $\rho_k\rightharpoonup \rho$ weakly in $L^2(\mathscr{D})$ to $\rho_k\rightarrow \rho$ in $\cC^0([0,T];L_w^2(\Omega))$ and to show that the functions $(\pi(z_k))_{k\in\N}$ satisfy \eqref{eq:X_0_bonus_property} for all $k$ big enough.
However, the improved convergence follows from Remark \ref{rem:slightly_stronger_convergence_of_plane_waves} by using cylinders instead of balls in the proof of \cite[Lemma 2.4]{Crippa}.
Finally, the fact that the sequence $(\pi(z_k))_{k\in\N}$ satisfies property \eqref{eq:X_0_bonus_property} for $k$ big enough follows as in Step 3 of the proof of \cite[Proposition 3.1]{Castro-Faraco-Mengual}. Indeed, since $z\in X_0$, it is enough to fix $C'(z)\in(0,1-C(z))$ and to show
\[
\abs{\int_\Omega\left(\frac{n}{2}e_1(x,t)+gAx_n\right)(\rho_k(x,t)-\rho(x,t))\:dx}\leq C'(z)\delta(t)
\]
for all $t\in[0,T]$ and all $k$ sufficiently large. Since by construction $\rho_k=\rho$ outside a compact subset of the mixing zone $\mathscr{U}$, hence outside a set contained in $[t_0,t_1]\times \Omega$ for some $0<t_0<t_1<T$, it is enough to show 
\[
\forall t\in[t_0,t_1]:\quad\abs{\int_\Omega f(x,t)(\rho_k(x,t)-\rho(x,t))\:dx}\leq C'(z)\delta_0,
\]
where $f(x,t):=\frac{n}{2}e_1(x,t)+gAx_n$ and $\delta_0:=\inf\set{\delta(t)>0:t\in[t_0,t_1]}>0$. But the latter inequality holds true for big enough $k$ due to the uniform continuity of the map $[0,T]\ni t\mapsto f(\cdot,t)\in L^2(\Omega)$, the uniform bound on $\norm{\rho_k(\cdot,t)}_{L^2(\Omega)}$ and the convergence $\rho_k\rightarrow \rho$ in $\cC^0([0,T];L^2_w(\Omega))$.
\end{proof}
\begin{proof}[Proof of Theorem \ref{thm:main2}]
Having Lemma \ref{lem:perturbation_lemma} at hand we can prove as in \cite{Crippa} or \cite{GKSz} that $J^{-1}(0)$ is contained in the set of continuity points of $I$, where $I,J$ were defined in \eqref{eq:definition_of_I}, \eqref{eq:definition_of_J}. Since $I$ is Baire-1, this shows that $J^{-1}(0)$ is residual in $(X,d_X)$. Observe also that if $\pi(z)\in J^{-1}(0)$, then $(\rho,v)$ is a weak solution of \eqref{eq:bou}, \eqref{eq:boundary_condition} satisfying properties a) and b) of Theorem \ref{thm:main2}. 

Concering property Thm. \ref{thm:main2} c), approximation by elements from $X_0$ with respect to $d_X$ shows that any element $\pi(z)$ from $X$ satisfies 
\[
\abs{\int_\Omega\left(\frac{n}{2}e_1(x,t)+gAx_n\right)(\hat{\rho}(x,t)-\rho(x,t))\:dx}\leq \delta(t)
\]
for all $t\in[0,T]$.

Finally property \ref{thm:main2} d) is a consequence of \cite[Corollary 3.1]{Castro-Faraco-Mengual}. 
This finishes the proof of Theorem \ref{thm:main2}.
\end{proof}
\section{Subsolutions}\label{sec:subsolutions}
Let us turn to the construction of subsolutions on the $n$-dimensional box $\Omega:=(0,1)^{n-1}\times(-L,L)$, $L>0$ with Rayleigh-Taylor initial data \eqref{eq:initial_data}. Let $T>0$ and $\mathscr{D}:=\Omega\times (0,T)$. Neglecting the admissibility, recall from Definition \ref{def:subsolEuler} that a subsolution $z=(\rho,v,m,\sigma,p)$ is a weak solution of the linear system 
\eqref{eq:subsolution_system_sec2}
on $\mathscr{D}$ with 
boundary data
\eqref{eq:subsolution_boundary_data_sec2}
which is continuous on an open subset $\mathscr{U}\subset\mathscr{D}$ satisfying
\begin{gather}\label{eq:hull_inequalities}
\begin{gathered}
\rho\in(-1,1),\quad \frac{\abs{m\pm v}^2}{n(1\pm \rho)^2}<e[\pm 1],\\
\lamax\left(\frac{v\otimes v-\rho(m\otimes v+v\otimes m)+m\otimes m}{1-\rho^2}-\sigma\right)<e[\rho]
\end{gathered}
\end{gather}
there, where $e:\mathscr{D}\times \R\rightarrow\R$, $(x,t,r)\mapsto e(x,t)[r]$ is continous on $\mathscr{U}$ and affine with respect to $r$. Outside of $\mathscr{U}$ the conditions $\rho\in\set{\pm 1}$, $v\otimes v-\sigma=e[\rho]\id_{\R^n}$, $m=\rho v$ are required to hold almost everywhere.

Due to the heuristic argument in Section \ref{sec:choices_concerning_the_energy}, we consider $e=e_\varepsilon$ to be of the form 
\begin{align*}
e_\varepsilon(x,t)[r]=\tilde{e}_\varepsilon(x,t)-\varepsilon gA x_n r
\end{align*}
with $\tilde{e}_\varepsilon:\mathscr{D}\rightarrow\R$ continuous on $\mathscr{U}$ and $\varepsilon\in \left[0,\frac{2}{n}\right]$, such that Theorem \ref{thm:main2} will produce turbulent solutions to \eqref{eq:bou},  \eqref{eq:boundary_condition}, \eqref{eq:initial_data} with local energy given by
\begin{align*}
\cE_{sol}(x,t)=\frac{n}{2}\tilde e_\varepsilon(x,t)+\left(1-\frac{n}{2}\varepsilon\right)\rho_{sol}(x,t)gAx_n.
\end{align*}

\subsection{Self-similar subsolutions}\label{sec:1Dsubsols}

In this section we prove Lemma \ref{lem:1D_subsolutions}. 
Recall the definitions of $\mathcal F$ and $\mathcal A$ from \eqref{eq:definition_of_set_F}, respectively \eqref{eq:definition_of_set_A}.
\begin{proof}[Proof of Lemma \ref{lem:1D_subsolutions}]
For
$f\in \cF$ define $F:[-1,1]\rightarrow \R$,
\[
F(y):=\int_{-1}^yf'(s)s\:ds.
\]
For any choice of a profile $f\in \cF$ and a growth rate $a\in \cA$ one can check that
 $z=(\rho,v,m,\sigma,p):\mathscr{D}\rightarrow Z$ defined by $v\equiv 0$, 
$\rho(x,t)=1$, $m(x,t)=0$ for $x_n\geq a(t)$, $\rho(x,t)=-1$, $m(x,t)=0$ for $x_n\leq -a(t)$ and 
\[
\rho(x,t)=f\left(\frac{x_n}{a(t)}\right),\quad m(x,t)=\dot{a}(t)F\left(\frac{x_n}{a(t)}\right)e_n
\]
for $x_n\in (-a(t),a(t))$, as well as $\sigma(x,t)=0$ for $\abs{x_n}\geq a(t)$,
\begin{align}\label{eq:subsp}
\begin{split}
\sigma(x,t)=\frac{\left(m(x,t)\otimes m(x,t)\right)^\circ}{1-\rho(x,t)^2}\quad \text{ for }\abs{x_n}<a(t),\\
p(x,t)=-\sigma_{nn}(x,t)-gA\int_{-L}^{x_n}\rho(\tilde{x}_n,t)\:d\tilde{x}_n
\end{split}
\end{align}
are continuous on $\overline{\mathscr{D}}\setminus \big(\R^{n-1}\times\{0\}\times\{0\}\big)$ piecewise $\cC^1$ and satisfy \eqref{eq:subsolution_system_sec2}, \eqref{eq:initial_data}, and also \eqref{eq:subsolution_boundary_data_sec2} as long as $a(t)\leq L$ for all $t\in(0,T)$. The continuity of $m$ is a consequence of the symmetry of $f$, while the continuity of $\sigma$ follows by an expansion at the points $x_n=\pm a(t)$ and the condition $f'(\pm 1)> 0$.

Once the construction of the subsolution is finished the set
\[
\mathscr{U}:=\set{(x,t)\in\mathscr{D}:x_n\in(-a(t),a(t))}
\]
will be the mixing zone. Concerning the pointwise constraints we define 
\begin{align}\label{eq:subse}
\begin{split}
\tilde{e}_\varepsilon(x,t):=&\max\set{\frac{m_n(x,t)^2}{n(1+\rho(x,t))^2}+\varepsilon gAx_n,\frac{m_n(x,t)^2}{n(1-\rho(x,t))^2}-\varepsilon gAx_n}\\
&\hspace{215pt}+\left(1-\rho(x,t)^2\right)\delta(x,t)
\end{split}
\end{align}
for $(x,t)\in \mathscr{U}$ and $\tilde{e}_\varepsilon(x,t)=\varepsilon gA \abs{x_n}$ for $(x,t)\in\overline{\mathscr{D}}\setminus\mathscr{U}$. Here $\delta:\mathscr{D}\rightarrow(0,+\infty)$ is a continuous, even, positive and typically small function guaranteeing the inequalities \eqref{eq:hull_inequalities} to hold in a strict sense.
Indeed the first three conditions in \eqref{eq:hull_inequalities} hold by definition of $\rho$ and $\tilde{e}_\varepsilon$. For the last inequality we have
\begin{align*}
\lamax\left(\frac{m\otimes m}{1-\rho^2}-\sigma\right)&=\frac{\abs{m}^2}{n(1-\rho^2)}=\frac{1+\rho}{2}\frac{\abs{m}^2}{n(1+\rho)^2}+\frac{1-\rho}{2}\frac{\abs{m}^2}{n(1-\rho)^2}\\
&< \frac{1+\rho}{2}e_\varepsilon[+1]+\frac{1-\rho}{2}e_\varepsilon[-1]=e_\varepsilon[\rho].
\end{align*}
Outside of $\mathscr{U}$ it is clear that $\rho=1$ on $\set{x_n\geq a(t)}$, $\rho=-1$ on $\set{x_n\leq -a(t)}$,  $m=0=\rho v$ and $v\otimes v-\sigma=0=\tilde{e}_\varepsilon-\varepsilon gA \abs{x_n}=e_\varepsilon[\rho]$.
This concludes the proof of Lemma \ref{lem:1D_subsolutions}.
\end{proof}

\subsection{Admissibility and maximal initial energy dissipation}\label{sec:admissibility_of_subsols}
Instead of investigating all admissible subsolutions emanating from Section \ref{sec:1Dsubsols}, we will focus  on the one that is selected by asking for maximal initial energy dissipation. 

For $(f,a,\varepsilon)\in\cF\times \cA\times\left[0,\frac{2}{n}\right]$ observe that the total energy at time $t>0$ of the induced subsolution can be choosen arbitrarily close to $E_{f,a,\varepsilon}(t)$ defined in
\eqref{eq:subste}, which for admissibility has to be less than the initial energy $E(0)=\int_\Omega gA\abs{x_n}\:dx$. In fact $E_{f,a,\varepsilon}(t)$ can be obtained from $\int_\Omega \frac{n}{2}\tilde{e}_\varepsilon(x,t)+\left(1-\frac{n}{2}\varepsilon\right)gAx_n\rho(x,t)\:dx$ with $\tilde{e}_\varepsilon$ defined in \eqref{eq:subse} by letting $\delta\to 0$ in $L^\infty((0,T);L^1(\Omega))$.

Using this, the definitions of $\rho$, $m$ from Section \ref{sec:1Dsubsols}, the transformation $x_n=a(t)y$ and the symmetry of $f$ one sees that the difference of the energies can be computed by the following integrals
\begin{align*}
&E_{f,a,\varepsilon}(t)-E(0)\\&\hspace{10pt}=2\int_0^1\max\set{\frac{a(t)\dot{a}(t)^2F(y)^2}{2(1+f(y))^2}+\frac{n}{2}\varepsilon gAa(t)^2y,\frac{a(t)\dot{a}(t)^2F(y)^2}{2(1-f(y))^2}-\frac{n}{2}\varepsilon gAa(t)^2y}\:dy\\&
\hspace{45pt}+a(t)^2gA\int_0^1(2-n\varepsilon)yf(y)-2y\:dy.
\end{align*}
Concerning the well-definedness observe again that for all $f\in\cF$ the quotient $\frac{F(y)}{1-f(y)}$ has a finite limit as $y\rightarrow 1$.

For a given profile $f\in\cF$ and a growth rate $a\in\cA$ one can via the above formula simply check by hands the admissibility of the induced self-similar subsolution.
\begin{example}\label{ex:ss}
If $T\leq \sqrt{\frac{3L}{gA}}$, the choices $f(y)=y$, 
 $a(t)=\frac{1}{3}gAt^2$ and $\varepsilon=\frac{2}{3n}$ give rise to a subsolution on $\Omega\times (0,T)$ with
\[
E_{f,a,\varepsilon}(t)-E(0)=-\frac{g^3A^3}{81}t^4.
\]
In particular this implies that the subsolution is admissible for small $\delta(x,t)$. 
\end{example}
\begin{remark}\label{rem:energy_ration}
The released potential energy of the subsolution above at time $t\in[0,T)$ is given by
\[
\int_{\Omega}gAx_n\rho(x,t)\:dx-E(0)=-\frac{g^3A^3}{27}t^4.
\]
Therefore the ratio between dissipated and released energy is $\frac{1}{3}$.
\end{remark}

Besides the fact of being a simple example, it turns out that these choices for $f,a,\varepsilon$ maximize the initial energy dissipation.

Recall the functionals $J_k$, $k=0,\ldots,4$ from \eqref{eq:Jk}.
Since $a(0)=0$, there clearly holds $J_0(f,a,\varepsilon)=0$. 
We are now in position to prove Theorem \ref{thm:selection_by_initial_dissipation}.

\begin{proof}[Proof of Theorem \ref{thm:selection_by_initial_dissipation}]

In the formula for the energy difference let us abbreviate the two terms among which the maximum is taken, i.e., set
\begin{align}\label{eq:definition_of_terms_under_max}
\begin{split}
G^+_{f,a,\varepsilon}(y,t)&:=\frac{a(t)\dot{a}(t)^2F(y)^2}{2(1+f(y))^2}+\frac{n}{2}\varepsilon gAa(t)^2y,\\
G^-_{f,a,\varepsilon}(y,t)&:=\frac{a(t)\dot{a}(t)^2F(y)^2}{2(1-f(y))^2}-\frac{n}{2}\varepsilon gAa(t)^2y.
\end{split}
\end{align}
Estimating the maximum from below by the convex combination
\begin{align}\label{eq:estimating_max_by_convex_combination}
\begin{split}
\max\set{G^+_{f,a,\varepsilon}(y,t),G^-_{f,a,\varepsilon}(y,t)}&\geq \frac{1+f(y)}{2}G^+_{f,a,\varepsilon}(y,t)+\frac{1-f(y)}{2}G^-_{f,a,\varepsilon}(y,t)\\
&=\frac{a(t)\dot{a}(t)^2F(y)^2}{2(1-f(y)^2)}+\frac{n}{2}f(y)\varepsilon gA a(t)^2y
\end{split}
\end{align}
yields
\begin{align*}
J_k(f,a,\varepsilon)&\geq \lim_{t\rightarrow 0}\left(a(t)\dot{a}(t)^2\int_0^1\frac{F(y)^2}{1-f(y)^2}\:dy-2a(t)^2gA\int_0^1(1-f(y))y\:dy\right)t^{-k}.
\end{align*}
Observe that
\begin{align}\label{eq:definition_of_Ii}
I_1(f):=\int_0^1\frac{F(y)^2}{1-f(y)^2}\:dy>0,\quad I_2(f):=\int_0^1(1-f(y))y\:dy>0,
\end{align}
such that the required admissibility implies
\[
0\geq J_1(f,a,\varepsilon)\geq \dot{a}(0)^3I_1(f)\geq 0,
\]
and therefore $\dot{a}(0)=0$, $J_1(f,a,\varepsilon)=0$. Since now $a(t)=\frac{1}{2}\ddot{a}(0)t^2+o(t^2)$ as $t\rightarrow 0$ the admissibility also implies $J_2(f,a,\varepsilon)=J_3(f,a,\varepsilon)=0$. This proves the first part of the Theorem.

The lowest order for which the initial energy dissipation rate is not necessarily vanishing is $4$. There holds
\begin{align}\label{eq:estimating_J4}
J_4(f,a,\varepsilon)\geq \frac{1}{2}\ddot{a}(0)^3I_1(f)-\frac{1}{2}\ddot{a}(0)^2gAI_2(f)=:\tilde{J}(f,\ddot{a}(0)).
\end{align}
In Lemma \ref{lem:unique_minimizer} below we will show that the functional $\tilde{J}:\cF\times [0,+\infty)\rightarrow \R$ has a unique global minimum in $f(y)=y$ and $\ddot{a}(0)=\frac{2}{3}gA$.

It follows that for any $(f,a,\varepsilon)\in\cF\times\cA\times \left[0,\frac{2}{n}\right]$ leading to an admissible subsolution there holds
\[
J_4(f,a,\varepsilon)\geq \tilde{J}\left(\id,\frac{2}{3}gA\right)=-\frac{g^3A^3}{81}.
\]
Note that $I_1(\id)=\frac{1}{6}$, $I_2(\id)=\frac{1}{6}$.
It remains to check that this lower bound is achieved for $f(y)=y$, any $a\in\cA$ with $a(t)=\frac{1}{3}gAt^2+o(t^2)$ and $\varepsilon=\frac{2}{3n}$. This is a consequence of the fact that for this choice the two limits
\begin{align*}
\lim_{t\rightarrow 0}\frac{G^\pm_{f,a,\varepsilon}(y,t)}{t^4}=\frac{\ddot{a}(0)^3F(y)^2}{4(1\pm f(y))^2}\pm\frac{n}{8}\varepsilon gA\ddot{a}(0)^2y=\frac{g^3A^3}{54}(1+y^2),
\end{align*}
with $G^\pm_{f,a,\varepsilon}$ defined in \eqref{eq:definition_of_terms_under_max}, coincide. Therefore instead of an inequality we actually have equality when dividing \eqref{eq:estimating_max_by_convex_combination} by $t^4$ and passing to the limit $t\rightarrow 0$. Thus we also have equality in \eqref{eq:estimating_J4}, which means 
\[
J_4\left(\id,\frac{1}{3}gAt^2+o(t^2),\frac{2}{3n}\right)=\tilde{J}\left(\id,\frac{2}{3}gA\right)=-\frac{g^3A^3}{81}.
\]
The uniqueness of the minimizer follows from the uniqueness of the minimizer of $\tilde{J}$ and the fact that for $f(y)=y$, $a(t)=\frac{1}{3}gt^2+o(t^2)$, any choice of $\varepsilon\neq \frac{2}{3n}$ leads to a strict inequality when estimating the maximum by the convex combination in the limit $t\rightarrow 0$ of  $\frac{\eqref{eq:estimating_max_by_convex_combination}}{t^4}$.
\end{proof}

\begin{lemma}\label{lem:unique_minimizer}
The functional $\tilde{J}:\cF\times[0,+\infty)\rightarrow\R$,
\[
\tilde{J}(f,c)=\frac{1}{2}c^3I_1(f)-\frac{1}{2}c^2gAI_2(f)
\]
with $I_{1,2}(f)$ defined in \eqref{eq:definition_of_Ii} has a unique global minimum in $\left(\id,\frac{2}{3}gA\right)$.
\end{lemma}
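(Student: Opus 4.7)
The plan is to reduce the two-variable minimization by first optimizing in $c$ for fixed $f$, arriving at a ratio $\Phi(f) := I_2(f)^3/I_1(f)^2$ whose sharp upper bound $\tfrac{1}{6}$ must then be established.

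For each $f \in \cF$, view $\tilde{J}(f,c) = \tfrac{c^2}{2}\bigl(c I_1(f) - gA I_2(f)\bigr)$ as a cubic in $c$. Solving $\partial_c \tilde{J} = c\bigl(\tfrac{3}{2}c I_1 - gA I_2\bigr) = 0$ gives the unique positive critical point $c_\ast(f) = \tfrac{2gA I_2(f)}{3 I_1(f)}$, which is the global minimum on $[0,+\infty)$ since $\tilde{J}(f,\cdot) \to +\infty$ at infinity. Its value is
\[
\tilde{J}(f, c_\ast(f)) = -\tfrac{2 g^3 A^3}{27}\,\Phi(f).
\]
A direct computation yields $I_1(\mathrm{id}) = I_2(\mathrm{id}) = \tfrac{1}{6}$, so $\Phi(\mathrm{id}) = \tfrac{1}{6}$, and the problem reduces to proving $\Phi(f) \le \tfrac{1}{6}$ for all $f \in \cF$, with equality iff $f = \mathrm{id}$.

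To prove this sharp bound I would first recast the integrals. Since $f'(s)s$ is odd ($f'$ being even), $F(\pm 1) = 0$, and IBP gives $2 I_2(f) = \int_0^1 y^2 f'(y)\,dy = -\int_0^1 F(y)\,dy$. Assuming $f$ is strictly increasing, substitute $t = f(y)$, $g := f^{-1}$, and $\tilde{F}(t) := \int_t^1 g(u)\,du$, so that $\tilde{F}' = -g$ and $g(0) = \tilde{F}(1) = 0$. A short computation (and one further IBP) yields
\[
I_2 = \tfrac{1}{2}\!\int_0^1 g(t)^2\,dt = \tfrac{1}{2}\!\int_0^1 g'(t)\,\tilde{F}(t)\,dt, \qquad I_1 = \!\int_0^1 \frac{\tilde{F}(t)^2\,g'(t)}{1-t^2}\,dt.
\]
Now iterate Cauchy-Schwarz. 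Splitting $g'\tilde{F} = \bigl(\tilde{F}\sqrt{g'/(1-t^2)}\bigr)\cdot \sqrt{g'(1-t^2)}$ gives
\[
(2I_2)^2 = \Bigl(\int_0^1 g'\,\tilde{F}\,dt\Bigr)^2 \le I_1 \cdot \int_0^1 g'(t)(1-t^2)\,dt = 2 I_1 \int_0^1 g(t)\,t\,dt,
\]
where the last equality is an IBP; a second Cauchy-Schwarz yields $\int_0^1 g(t)\,t\,dt \le \bigl(\int_0^1 g^2\bigr)^{1/2}\bigl(\int_0^1 t^2\bigr)^{1/2} = \sqrt{2I_2/3}$. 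Chaining, $4I_2^2 \le 2I_1 \sqrt{2I_2/3}$, which rearranges to $6I_2^3 \le I_1^2$, i.e.\ $\Phi(f) \le \tfrac{1}{6}$. Equality in the first Cauchy-Schwarz forces $\tilde{F}(t) \propto 1-t^2$, whence $g(t) \propto t$; equality in the second also forces $g \propto t$; together with $g(1) = 1$ this pins down $g = \mathrm{id}$, so $f = \mathrm{id}$ and $c_\ast(\mathrm{id}) = \tfrac{2}{3}gA$, the claimed unique minimizer.

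The main obstacle is that the substitution $t = f(y)$ presupposes $f$ strictly increasing, whereas $\cF$ contains non-monotone elements. I would handle this either by a density/approximation argument, showing that strictly monotone elements of $\cF$ are dense in a topology with respect to which $I_1$ and $I_2$ are continuous, or by a direct rearrangement argument establishing that the supremum of $\Phi$ is realized on the monotone subclass, after which the Cauchy-Schwarz chain above applies without modification. The remaining computations (the cubic minimization in $c$ and the double Cauchy-Schwarz) are essentially routine; the only real subtlety lies in dealing with $f \in \cF$ that fail to be monotone.
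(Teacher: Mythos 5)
Your reduction to $\Phi(f):=I_2(f)^3/I_1(f)^2\le\tfrac16$ via minimization in $c$ matches the paper exactly, and the first Cauchy--Schwarz you perform is, after undoing the change of variables, identical to the paper's single Cauchy--Schwarz: $\bigl(\int_0^1 F\,dy\bigr)^2\le I_1(f)\int_0^1(1-f^2)\,dy$. That step works for any $f\in\cF$ and needs no monotonicity. The genuine gap is in your second Cauchy--Schwarz. Translated back via $t=f(y)$, $g=f^{-1}$, the claim $\int_0^1 g(t)\,t\,dt\le\sqrt{2I_2/3}$ is exactly the assertion
\[
\hat J(f):=-\int_0^1 F(y)\,dy-\frac34\left(\int_0^1\bigl(1-f(y)^2\bigr)\,dy\right)^2\ge 0,
\]
which is precisely the content of the paper's Lemma~\ref{lem:reduced_functional}, and that lemma is the real work of the whole argument. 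Your derivation of it is a weighted Cauchy--Schwarz with weight $f'(y)\,dy$, which is a positive measure only when $f$ is monotone; for $f$ with sign-changing $f'$, which $\cF$ does admit, the step is simply false as written.

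Neither of your suggested repairs is routine. Density fails: strict monotonicity is an open condition in the $\cC^1$ topology, so a non-monotone $f\in\cF$ cannot be $\cC^1$-approximated by monotone elements, and $I_1(f)=\int_0^1 F^2/(1-f^2)\,dy$ is sensitive to the boundary behaviour of $f$ near $y=1$, so weaker topologies do not obviously make $I_1$ continuous. Rearrangement does work, but not ``directly'' on $\Phi$: a monotone increasing rearrangement of $|f|$ \emph{decreases} $I_2(f)=\int_0^1(1-f(y))y\,dy$, which is the wrong direction for maximising $\Phi$, and $I_1$ depends nonlocally on $f$ through $F$, so its behaviour under rearrangement is unclear. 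The paper sidesteps this by first isolating the functional $\hat J$ and rewriting it as $\hat J(f)=\|f-\id\|_{L^2}^2-\tfrac34\langle f-\id,f+\id\rangle_{L^2}^2=h(\|f\|_{L^2}^2)-2\langle\id,f\rangle_{L^2}$ with $h$ monotone increasing (equation~\eqref{eq:form_of_hat_J}); this specific quadratic structure is what makes the rearrangement of $|f_*|$ decrease $\hat J$ (it preserves the $L^2$ norm while increasing $\langle\id,f\rangle$) and what makes the subsequent critical-point analysis tractable, yielding $\id$ as the unique minimizer over all of $\cF_0=\{f\in L^2(0,1):|f|<1\text{ a.e.}\}$, not merely the monotone subclass. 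Without identifying $\hat J$ and this structure, the obstacle you flag is not a routine loose end but the heart of the proof.
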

\begin{proof}

First of all observe that for fixed $f\in\cF$ the function $\tilde{J}(f,\cdot):[0,+\infty)\rightarrow \R$ has a unique minimum in $c_0(f)=\frac{2}{3}gA\frac{I_2(f)}{I_1(f)}$.
Therefore
\begin{align*}
\tilde{J}(f,c)\geq \tilde{J}(f,c_0(f))=-\frac{2}{27}g^3A^3\frac{I_2(f)^3}{I_1(f)^2}
\end{align*}
and it remains to show 
\begin{align}\label{eq:final_inequality}
6I_2(f)^3<  I_1(f)^2
\end{align}
for any $f\in\cF\setminus \{\id\}$. Note that for $f=\id$ there holds equality, since $I_1(\id)=\frac{1}{6}$, $I_2(\id)=\frac{1}{6}$.

Let us rewrite
\begin{align*}
I_2(f)=\int_0^1(1-f(y))y\:dy=\int_0^1f'(y)\frac{1}{2}y^2\:dy=\frac{1}{2}\int_0^1yF'(y)\:dy=-\frac{1}{2}\int_0^1F(y)\:dy.
\end{align*}
Since $I_2(f)>0$, inequality \eqref{eq:final_inequality} is equivalent to $6I_2(f)^4<I_1(f)^2I_2(f)$. Now
\begin{align*}
6I_2(f)^4=\frac{3}{8}\left(\int_0^1\frac{F(y)}{\sqrt{1-f(y)^2}}\sqrt{1-f(y)^2}\:dy\right)^4\leq \frac{3}{8}I_1(f)^2\left(\int_0^11-f(y)^2\:dy\right)^2.
\end{align*}
Since also $I_1(f)$ is positive, we see that \eqref{eq:final_inequality} holds true provided
\begin{align*}
\hat{J}(f):=-\int_0^1F(y)\:dy-\frac{3}{4}\left(\int_0^11-f(y)^2\:dy\right)^2>0.
\end{align*}
In order to prove $\hat{J}(f)>0$ for $f\in\cF\setminus\{\id\}$ we write $f=\id+\varphi$ with $\varphi\neq 0$, such that
\[
F(y)=\int_{-1}^y(1+\varphi'(s))s\:ds=\frac{1}{2}(y^2-1)+y\varphi(y)-\int_{-1}^y\varphi(s)\:ds
\]
and
\begin{align*}
\hat{J}(\id+\varphi)&=-\int_0^1\frac{1}{2}(y^2-1)+y\varphi(y)-\int_{-1}^y\varphi(s)\:ds\:dy\\
&\hspace{60pt}-\frac{3}{4}\left(\int_0^11-y^2-2y\varphi(y)-\varphi(y)^2\:dy\right)^2\\
&=\int_0^1\varphi(y)^2\:dy-\frac{3}{4}\left(\int_0^12y\varphi(y)+\varphi(y)^2\:dy\right)^2.
\end{align*}
Thus in terms of $f$ and the $L^2(0,1)$ inner product and norm we have
\begin{align}\label{eq:definition_of_hat_J}
\hat{J}(f)=\norm{f-\id}_{L^2(0,1)}^2-\frac{3}{4}\ska{f-\id,f+\id}_{L^2(0,1)}^2.
\end{align}
The next (and last for this subsection) lemma implies that $\hat{J}(f)>0$ for all $f\in\cF\setminus\{\id\}$, which allows us to conclude the proof of Lemma \ref{lem:unique_minimizer}.
\end{proof}
\begin{lemma}\label{lem:reduced_functional}
Let $\cF_0:=\set{f\in L^2(0,1):\abs{f}<1\text{ a.e.}}$. The functional $\hat{J}$ defined in \eqref{eq:definition_of_hat_J} satisfies $\hat{J}(f)>0$ for all $f\in\cF_0\setminus\{\id\}$.
\end{lemma}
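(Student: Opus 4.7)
\emph{Proof plan.} Introduce the two scalar functionals $P(f) := \int_0^1 f^2\,dy$ and $Q(f) := \int_0^1 yf\,dy$. Expanding the two summands in \eqref{eq:definition_of_hat_J} and using $\norm{\id}^2_{L^2(0,1)} = 1/3$, a direct computation yields the equivalent expression
\begin{equation*}
\hat J(f) \,=\, 1 - 2Q(f) - \tfrac{3}{4}\bigl(1-P(f)\bigr)^2.
\end{equation*}
Since $\abs{f} < 1$ a.e.\ forces $P(f) < 1$, set $c := 1 - P(f) \in (0,1]$. The inequality $\hat J(f) > 0$ is equivalent to $2Q(f) < 1 - \tfrac{3}{4}c^2$, so for each fixed value of $c$ it is enough to solve $\max\set{Q(f) : f \in \overline{\cF_0},\, P(f) = 1-c}$ and verify that the strict inequality persists at the maximum.

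Relaxing the sphere constraint $P(f) = 1-c$ to the ball $P(f) \le 1-c$ (the relaxed problem attains its max on the boundary, since $Q$ strictly increases under any nonnegative perturbation of $f$ wherever $f < 1$) and exploiting strict concavity of $\int_0^1(yf - \lambda f^2)\,dy$ in $f$, the KKT conditions produce the unique maximizer
\begin{equation*}
f^*_c(y) \,=\, \min\bigl(y/(2\lambda(c)),\,1\bigr),
\end{equation*}
with $\lambda(c) > 0$ fixed by $\norm{f^*_c}^2_{L^2} = 1-c$. Two regimes arise depending on whether the affine profile $y/(2\lambda)$ already respects the bound $\le 1$ on all of $(0,1)$: for $c \in [2/3, 1]$ one has $\lambda(c) = \tfrac{1}{2}(3(1-c))^{-1/2} \ge \tfrac{1}{2}$ and $f^*_c(y) = y\sqrt{3(1-c)}$, while for $c \in (0, 2/3]$ one has $\lambda(c) = 3c/4 \le \tfrac{1}{2}$ and $f^*_c$ is a truncated ramp equal to $1$ on the interval $(3c/2,1)$. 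A short integration then gives
\begin{equation*}
1 - 2Q(f^*_c) \,=\, \begin{cases} \tfrac{3}{4}c^2, & c \in (0, 2/3], \\ 1 - \tfrac{2}{3}\sqrt{3(1-c)}, & c \in [2/3, 1], \end{cases}
\end{equation*}
the two expressions agreeing at $c = 2/3$, where $f^*_{2/3} = \id$.

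The main technical step is to combine the regimes. For $c \in (0, 2/3]$ one has $\hat J(f^*_c) = 0$, but the maximizer $f^*_c$ fails to lie in $\cF_0$ whenever $c < 2/3$ because it equals $1$ on the positive-measure set $(3c/2, 1)$; the sole exception is $c = 2/3$, where $f^*_c = \id$. For $c \in (2/3, 1]$ one must verify the strict positivity of $h(c) := 1 - \tfrac{2}{3}\sqrt{3(1-c)} - \tfrac{3}{4}c^2$, and since $h(2/3) = 0$ and $h'(c) = (3(1-c))^{-1/2} - 3c/2$ this reduces to positivity of the cubic $27c^3 - 27c^2 + 4$ on $(2/3, 1]$, which is manifest from the factorization
\begin{equation*}
27c^3 - 27c^2 + 4 \,=\, 9\bigl(c - \tfrac{2}{3}\bigr)^2(3c + 1),
\end{equation*}
with $c = 2/3$ appearing as a double root. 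Putting the cases together: for any $f \in \cF_0 \setminus \{\id\}$ with $c = 1 - P(f)$, uniqueness of the KKT maximizer gives $Q(f) \le Q(f^*_c)$ with equality only when $f = f^*_c$, so that $\hat J(f) \ge \hat J(f^*_c)$; this is strictly positive for $c > 2/3$, while for $c \in (0, 2/3)$ the equality $f = f^*_c$ is impossible because $f^*_c \notin \cF_0$, and for $c = 2/3$ it would force $f = \id$, contradicting the hypothesis. In every case $\hat J(f) > 0$.
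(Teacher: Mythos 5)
Your proof is correct, and it takes a genuinely different route from the paper's. Both proofs start from the same structural observation --- using $\norm{\id}_{L^2(0,1)}^2 = 1/3$, the functional $\hat J$ depends on $f$ only through the two scalars $P(f)=\norm{f}^2$ and $Q(f)=\ska{\id,f}$ --- but then diverge. The paper applies the direct method: it extracts a minimizer of $\hat J$ over $\overline{\cF}_0$ via weak compactness and the monotonicity of $h$, classifies the critical points of $\hat J$ on $L^2(0,1)$ (only $\id$ and $-2\id$), and rules out boundary minimizers via the monotone rearrangement followed by the scaling identity $\hat J(f_*)=a^2\hat J(f_0)$. Your argument avoids compactness and criticality entirely: you slice by $c=1-P(f)$, explicitly solve the convex program $\max Q$ subject to $P\le 1-c$, $\abs{f}\le 1$ (the strictly concave Lagrangian $Q-\lambda P$ has the pointwise maximizer $\min(y/(2\lambda),1)$, a truncated ramp), and reduce everything to the scalar inequality $h(c)>0$ on $(2/3,1]$, which you obtain from $h(2/3)=0$ and the factorization $27c^3-27c^2+4=9(c-2/3)^2(3c+1)$. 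A nice coincidence is that the truncated ramp appears in both approaches --- the paper obtains it by rearranging a hypothetical boundary minimizer, you obtain it as the extremal of the sliced problem --- but the uses are quite different. What your route buys is a fully elementary, self-contained computation with an explicit quantitative lower bound for $\hat J(f)$ as a function of $c=1-\norm{f}^2$; what the paper's route buys is a structural argument (weak compactness, critical point classification, rearrangement) that does not require finding the extremal profile explicitly. The steps of your proof that deserve care are all present: strict positivity of the Lagrange multiplier (so the maximizer of the constrained problem agrees with the unique maximizer of the Lagrangian), the boundary case $c=2/3$ where $f^*_c=\id$, and the exclusion of $f^*_c$ from $\cF_0$ for $c<2/3$ because it equals $1$ on a set of positive measure. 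All checks out.
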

\begin{proof}
We set $\overline{\cF}_0:=\set{f\in L^2(0,1):\abs{f}\leq 1\text{ a.e.}}$, which is the closure of $\cF_0$ with respect to $\norm{\cdot}_{L^2(0,1)}$, and observe that $\hat{J}(f)\geq -\frac{4}{3}$ for $f\in\overline{\cF}_0$. Now let $(f_n)_{n\in\N}\subset\overline{\cF}_0$ be a minimzing sequence for $\hat{J}$. Since $\overline{\cF}_0$ is bounded and convex there exists $f_*\in\overline{\cF}_0$ with $f_n\rightharpoonup f_*$ in $L^2(0,1)$ along a subsequence. By the weak lower semicontinuity of the norm and since 
\begin{equation}\label{eq:form_of_hat_J}
\hat{J}(f)=h\left(\norm{f}_{L^2(0,1)}^2\right)-2\ska{\id,f}_{L^2(0,1)}
\end{equation}
with $h:[0,1]\rightarrow \R$, 
\begin{align*}
h(x)=x+\frac{1}{3}-\frac{3}{4}\left(x-\frac{1}{3}\right)^2,\quad h'(x)=\frac{3}{2}(1-x)\geq 0,
\end{align*}
there holds 
\begin{align*}
\inf_{\overline{\cF}_0}\hat{J}&=\liminf_{n\rightarrow+\infty}\hat{J}(f_n)=h\left(\liminf_{n\rightarrow+\infty}\norm{f_n}_{L^2(0,1)}^2\right)-\liminf_{n\rightarrow+\infty}2\ska{\id,f_n}_{L^2(0,1)}\\
&\geq h\left(\norm{f_*}^2_{L^2(0,1)}\right)-2\ska{\id,f_*}_{L^2(0,1)}=\hat{J}(f_*).
\end{align*}
Thus the minimum of $\hat{J}:\overline{\cF}_0\rightarrow\R$ is achieved at $f_*$. 

Now there are two cases to consider: $f_*\in\cF_0$ and $f_*\in\overline{\cF}_0\setminus\cF_0$. 
In the first case $f_*\in\cF_0$ one can check that $f_*$ is a critical point of $\hat{J}$ considered as a map from all of $L^2(0,1)$ to $\R$. 

It is clear that $\hat{J}:L^2(0,1)\rightarrow\R$ is smooth and a quick computation shows that the gradient is given by 
\begin{align*}
\nabla \hat{J}(f)=(2-3S(f))f-2\id,
\end{align*}
where
\[
S(f):=\ska{f-\id,f+\id}_{L^2(0,1)}=\norm{f}_{L^2(0,1)}^2-\frac{1}{3}.
\]
Thus for a critical point of $\hat{J}$ there holds $S(f)\neq \frac{3}{2}$ and
\[
f=\frac{2}{2-3S(f)}\id.
\]
Plugging this identity into the definition of $S(f)$ one obtains that
\begin{align*}
S(f)=\frac{4}{(2-3S(f))^2}\norm{\id}^2_{L^2(0,1)}-\frac{1}{3}
\end{align*}
or equivalently $S(f)\in\set{0,1}$. Thus $\hat{J}:L^2(0,1)\rightarrow \R$ has exactly two critical points in $f=\id$ and $f=-2\id$, and only $f=\id$ is contained in $\cF_0$. Consequently if the minimum of $\hat{J}_{|\overline{\cF}_0}$ is achieved at $f_*\in\cF_0$, then $f_*=\id$ and $\hat{J}_{|\overline{\cF}_0}\geq \hat{J}(\id)=0$.

If we assume that $\id$ is not minimizing $\hat{J}_{|\overline{\cF}_0}$, then any minimizer $f_*$ lies in $\overline{\cF}_0\setminus\cF_0$ and satisfies $\hat{J}(f_*)<0$. Without loss of generality we can assume $f_*\geq 0$ and $f_*$ to be nondecreasing, otherwise we replace $f_*$ by the monotone increasing rearrangement of $\abs{f_*}$, which only decreases $\hat{J}$, cf. \eqref{eq:form_of_hat_J}.
These two properties together with $f_*\in\overline{\cF}_0\setminus\cF_0$ imply that there exist $f_0\in\cF_0$ and $a\in[0,1)$, such that
\begin{align*}
f_*(y)=f_0\left(\frac{y}{a}\right)\cX_{(0,a)}(y)+\cX_{(a,1)}(y)
\end{align*}
for a.e. $y\in(0,1)$. Here $\cX$ denotes the indicator function and for $a=0$ this expression is understood as $f_*=\cX_{(0,1)}$. In a straightforward way one sees that
\begin{gather*}
\norm{f_*}_{L^2(0,1)}^2=a\norm{f_0}_{L^2(0,1)}^2+1-a,\\
\ska{\id,f_*}_{L^2(0,1)}=a^2\ska{\id,f_0}_{L^2(0,1)}+\frac{1}{2}(1-a^2),
\end{gather*}
such that 
\[
\hat{J}(f_*)=a^2\hat{J}(f_0).
\]
Since by assumption $\hat{J}(f_*)<0$, this equality implies $a\in(0,1)$ and $\hat{J}(f_0)<\hat{J}(f_*)$, which tells us that $f_*$ can not be a minimizer of $\hat{J}_{|\overline{\cF}_0}$. Due to this contradiction we conclude that the infimum of $\hat{J}_{|\overline{\cF}_0}$ is achieved at $f_*=\id$.

Finally the strict inequality $\hat{J}(f)>0$ for $f\in\cF_0\setminus\{\id\}$ follows from the fact that $\id$ is the only critical point of $\hat{J}:L^2(0,1)\rightarrow \R$ lying in $\cF_0$.
\end{proof}

\subsection{Beyond small-time behaviour}\label{sec:beyond_small_times}

While the subsolution constructed in the previous subsection focused on minimizing the initial energy dissipation, one could also be interested in the long-time behaviour of such subsolutions. In particular, how can the subsolution be continued after $a$ reaches $L$, i.e. the mixing zone touches the upper boundary. There are two long-time states which are of interest, 
namely the one where both the density and the momentum are vanishing everywhere (hence there are no longer two different density fluids, but only one completely mixed fluid), and the configuration $-\rho_0$, where the higher density fluid occupies the lower half of the domain, respectively the lower density fluid occupies the upper half (i.e. gravity demixes the two fluids in the long term). We will show that both of these configurations can be achieved.

\subsubsection{Converging towards the fully mixed, isotropic state}

\begin{proof}[Proof of Proposition \ref{prop:mix}]
We claim that one may extend (in an admissible way) the subsolution given in Example \ref{ex:ss} from $\Omega\times \left(0,\sqrt{\frac{3L}{gA}}\right)$ to $\mathscr{D}:=\Omega\times (0,+\infty)$ simply by considering for $(x,t)\in(0,1)^{n-1}\times(-L,L)\times \left(\sqrt{\frac{3L}{gA}},+\infty\right)$ the following:
\begin{align*}
\rho(x,t)=\frac{3x_n}{gAt^2},\quad m(x,t)=\frac{3}{gAt^3}(x_n^2-L^2)e_n,\quad \varepsilon=\varepsilon(t)=\frac{2}{3n} \sqrt{\frac{3L}{gA}}\frac{1}{t},
\end{align*}
 $v\equiv 0$, 
$\sigma,p,\tilde e$ as in \eqref{eq:subsp}, \eqref{eq:subse}, as well as the mixing zone $$\mathscr{U}:=\set{(x,t)\in\mathscr{D}:|x_n|<\frac{gAt^2}{3}}.$$ Indeed, one observes through a straightforward calculation that for this choice, the maximum in \eqref{eq:subse} is always achieved for the first term if $x_n\geq 0$ and $t\geq \sqrt{\frac{3L}{gA}}$, i.e.
\begin{multline*}
\frac{m_n(x,t)^2}{n(1+\rho(x,t))^2}+\varepsilon(t) gAx_n\geq \frac{m_n(x,t)^2}{n(1-\rho(x,t))^2}-\varepsilon(t) gAx_n \\
\Leftrightarrow 2n\varepsilon(t) gA x_n \geq \frac{9}{g^2A^2t^6} \frac{4\rho(x,t)(x_n^2-L^2)^2}{\left(1-\rho(x,t)^2\right)^2}=\frac{27}{g^3A^3t^8} \frac{4L^4x_n(1-(x_n/L)^2)^2}{\left(1-\rho(x,t)^2\right)^2},
\end{multline*}
$\text{which follows if }
\frac{n}{2}\varepsilon(t) \geq \frac{1}{3} \left( \frac{3L}{gA t^2}\right)^4,$ by observing that $\left|\frac{1-(x_n/L)^2}{1-\rho(x,t)^2}\right|<1$. Plugging in the value for $\varepsilon(t)$, this is equivalent to $1\geq \left(\sqrt{\frac{3L}{gA}}\frac{1}{t}\right)^7$, which is obviously true for $t\geq \sqrt{\frac{3L}{gA}}$.

Hence we have
$$\tilde{e}_\varepsilon(x,t)=\frac{1}{n}\frac{(x_n^2-L^2)^2}{t^2(\frac{gA}{3}t^2+x_n)^2}+\varepsilon(t)gA x_n +(1-\rho(x,t)^2)\delta(x,t)\text{ for }x_n\geq 0,\ t\geq \sqrt{\frac{3L}{gA}}.$$
Then, recalling \eqref{eq:subste} and using the parity of $x_n\mapsto\tilde e_\varepsilon(x,t)$ as well as $x_n\mapsto x_n\rho(x,t)$, for $t>\sqrt{\frac{3L}{gA}}$ one obtains that 
\begin{align*}
E_{f,a,\varepsilon}(t)
=& 2\int_0^L \frac{n}{2}\left(\tilde{e}_\varepsilon(x,t)-(1-\rho(x,t)^2)\delta(x,t)\right)+\left(1-\frac{n}{2}\varepsilon\right)gA x_n\rho\:dx_n
\\=& \int_0^L\frac{(y^2-L^2)^2}{t^2(\frac{gA}{3}t^2+y)^2}+\frac{2}{3} \sqrt{\frac{3L}{gA}}\frac{1}{t}gA  y +2 \frac{3}{t^2} \left(1-\frac{1}{3}\sqrt{\frac{3L}{gA}}\frac{1}{t}\right) y^2\, dy\\=&\frac{1}{t^2}\left(\int_0^L \frac{(y^2-L^2)^2}{(\frac{gA}{3}t^2+y)^2}\, dy +2L^3\left(1-\frac{1}{3}\sqrt{\frac{3L}{gA}}\frac{1}{t}\right)\right) +\frac{gA L^2}{3}\sqrt{\frac{3L}{gA}}\frac{1}{t},
\end{align*}
which is decreasing with respect to $t,$ since
\begin{align*}
\frac{d}{dt}\left(\frac{1}{t^2}\left(1-\frac{1}{3}\sqrt{\frac{3L}{gA}}\frac{1}{t}\right)\right)=-\frac{2}{t^3}\left(1-\frac{1}{2}\sqrt{\frac{3L}{gA}}\frac{1}{t}\right)<0,\text{ for }t\geq \sqrt{\frac{3L}{gA}}.
\end{align*}
Therefore, the admissibility follows. 

To conclude the proof of Proposition \ref{prop:mix}, observe that the limit of the subsolution as $t\to+\infty$ is identically zero, and $\delta$ can be chosen such that the energy of the system also decays to zero in the limit at $+\infty$.
\end{proof}

\begin{remark}
Since the kinetic energy of the solutions associated with the constructed subsolution goes to $0$ as $t\rightarrow+\infty$, any turbulent motion, in fact any motion, will vanish as $t\to+\infty$.
Note that one could have made the same construction while keeping $\varepsilon=\frac{2}{3n}$ constant and still have obtained an admissible subsolution. However, the associated energy as $t\to+\infty$ would not vanish, which would imply that there is still some turbulence at infinite time.
\end{remark}

\subsubsection{Demixing in finite time}

Let us now construct an example of a different admissible continuation past the time when the mixing zone touches the upper boundary, one where first the density profile is rotated by $180$ degrees, and then the mixing zone shrinks until the stable configuration $-\rho_0$ is reached. 

\begin{proof}[Proof of Proposition \ref{prop:demix}]
We will do this in two steps. 

\textbf{Step 1: Rotation.}

Denote $T_0:=\sqrt{\frac{3L}{gA}}$.
As before, on $\left[0,T_0\right]$ we consider the subsolution given in Example \ref{ex:ss}. We claim that there exist $\tilde T>T_0$ and a non-increasing, continuously differentiable function $r:\left[T_0,\tilde T\right]\to\left[-\frac{1}{L},\frac{1}{L}\right]$
satisfying $r\left(T_0\right)=\frac{1}{L}$, $r(\tilde T)=-\frac{1}{L}$, $\dot{r}\left(T_0\right)=-2\sqrt{\frac{gA}{3L^3}}$,
 such that setting
\begin{align*}
\rho(x,t)=r(t)x_n,\quad m(x,t)=-\frac{\dot{r}(t)}{2}(x_n^2-L^2) e_n,
\end{align*}
as well as $v\equiv 0$, $\varepsilon=\frac{2}{3n},$
and $\sigma,p,\tilde e_\varepsilon$ as in \eqref{eq:subsp}, \eqref{eq:subse}, for $(x,t)\in(0,1)^{n-1}\times(-L,L)\times (T_0,\tilde T)\subset\mathscr{U}$,
yields a subsolution which is continuous, piecewise $\cC^1$ and admissible on $\Omega\times(0,\tilde T]$.

Indeed, the continuity at $t=T_0$ follows from the definitions of $r(T_0)$ and $\dot{r}(T_0)$.
To check the admissibility, one needs to treat the maximum in \eqref{eq:subse}. Once again, through simple calculations one obtains for $x_n\geq 0$, $t\in[T_0,\tilde{T}]$ that if $r(t)\dot{r}(t)^2\leq\frac{4gA}{3L^4}$, then the maximum is realized by the first term, i.e.
$$\tilde{e}_\varepsilon(x,t)=\frac{1}{n}\left(\frac{\dot{r}(t)^2(x_n^2-L^2)^2}{4(1+r(t)x_n)^2}+\frac{2}{3}gA x_n \right)+(1-\rho(x,t)^2)\delta(x,t).$$
Using once more the parity of $x_n\mapsto\tilde e_\varepsilon(x,t)$ and $x_n\mapsto x_n\rho(x,t)$, one obtains that in this case the corrected total energy at time $t\in[T_0,\tilde{T}]$ reads
\begin{align}\label{eq:subser}
\begin{split}
E_{r}(t):=&\int_\Omega\frac{n}{2}(\tilde{e}_\varepsilon(x,t)-(1-\rho(x,t)^2)\delta(x,t))+\left(1-\frac{n}{2}\varepsilon\right)gA x_n\rho\:dx\\=&\frac{\dot{r}(t)^2}{4}I(r(t)) +\frac{4}{9}gAL^3r(t)+\frac{1}{3}gAL^2,
\end{split}
\end{align}
where
\begin{align*}
I(r)&:=\int_0^L\frac{(y^2-L^2)^2}{(1+ry)^2} \, dy
\end{align*}

Let us now construct a function $r$ satisfying the properties stated above. 

Observe that $I\in\cC^1\left(\left(-\frac{1}{L},+\infty\right)\right)\cap\cC^0\left(\left[-\frac{1}{L},+\infty\right)\right)$ with $I\left(-\frac{1}{L}\right)=\frac{7}{3}L^5$. Moreover, $I$ is clearly positive and monotone decreasing on the intervall $\left[-\frac{1}{L},+\infty\right)$.
Let $r:[T_0,T_{\max})\rightarrow\R$ be the unique solution of the initial value problem
\begin{align}\label{eq:err}
\dot{r}(t)=-2\sqrt{\frac{gAL^2}{9I(r(t))}},\quad r(t)\in\left(-\frac{1}{L},+\infty\right),\quad r(T_0)=\frac{1}{L},
\end{align}
where $T_{\max}$ denotes the maximal time of existence of the solution. 

We claim that $T_{\max}<+\infty$ and $r$ as a function extends continuously to $[T_0,T_{\max}]$ with $r(T_{\max})=-\frac{1}{L}$. Assume to the contrary that $T_{\max}=+\infty$, then $r(t)>-\frac{1}{L}$ for all $t\geq T_0$.
But now integrating \eqref{eq:err} for $t\in(T_0,T_{\max})$ and using that $I$ is decreasing, one has the contradiction
\begin{align*}
-\frac{2}{L}< r(t)-\frac{1}{L}=-2\int_{T_0}^t \sqrt{\frac{gAL^2}{9I(r(s))}} \, ds \leq -2\sqrt{\frac{gAL^2}{9I(-\frac{1}{L})}}(t-T_0)\rightarrow -\infty
\end{align*}
as $t\rightarrow +\infty$. Hence $T_{\max}<+\infty$ and then necessarily $\lim_{t\rightarrow T_{\max}}r(t)=-\frac{1}{L}$, because the orbit $r([T_0,T_{\max}))$ is bounded from above due to the monotonicity of $r$. We therefore set $\tilde{T}:=T_{\max}$.

Next due to $I\left(\frac{1}{L}\right)=\frac{1}{3}L^5$ and \eqref{eq:err} it is easy to see that $\dot{r}\left(T_0\right)=-2\sqrt{\frac{gA}{3L^3}}$. 

Finally, let us show that the associated   corrected total energy function $E_r$ is decreasing, to conclude the admissibility on $[T_0,\tilde T]$ of our subsolution.
For this, we first show that one has $r(t)\dot{r}(t)^2\leq\frac{4gA}{3L^4}$, so that in $\tilde{e}_\varepsilon$ indeed the first term under the maximum is selected for $x_n\geq 0$ and thus \eqref{eq:subser} holds. Once again, this follows from \eqref{eq:err} by using the monotonicity of $I$ and $r$:
\begin{align*}
r(t)\dot{r}(t)^2\leq\frac{1}{L}\frac{4}{I(\frac{1}{L})}\frac{1}{9}gAL^2=\frac{12}{L^6}\frac{1}{9}gAL^2=\frac{4gA}{3L^4}.
\end{align*}
Since the  corrected total energy function $E_r$ is then given by formula \eqref{eq:subser}, we may plug \eqref{eq:err} into \eqref{eq:subser} to further obtain that
\begin{align*}
E_r(t)=\frac{1}{9}gAL^2+\frac{4}{9}gAL^3r(t)+\frac{1}{3}gAL^2=\frac{4}{9}gAL^2\left(1+Lr(t)\right),
\end{align*}
which is clearly decreasing since $r$ is decreasing. This concludes the construction for the rotation of the profile.

\textbf{Step 2: Shrinking of the mixing zone.}

We will now further extend the subsolution constructed above past the time $\tilde T$. 
Let
$$T_{end}:=\tilde T+\sqrt{\frac{21L}{gA}},$$
and set $\mathscr{D}:=\Omega\times(0,T_{end})$, $\mathscr{U}:=\set{(x,t)\in\mathscr{D}:x_n\in(-a(t),a(t))}$, with 
\begin{align*}
a(t)= \left\{
\begin{array}{ll}
      \frac{gAt^2}{3}, & 0\leq t \leq T_0 \\
      L, & T_0\leq t\leq \tilde T\\
     \frac{gA(t-T_{end})^2}{21}, & \tilde T\leq t\leq T_{end} \\
\end{array} 
\right. .
\end{align*}

On $[0,T_0]$ our subsolution will coincide with the one from Example \ref{ex:ss}, on $[T_0,\tilde T]$ with the one constructed in Step 1, and on $[\tilde T, T_{end}]$ it will be of the form
\begin{align*}
\rho(x,t)=-\frac{x_n}{a(t)},\quad m(x,t)=\frac{\dot{a}(t)}{2}\left(1-\frac{x_n^2}{a(t)^2}\right)e_n,
\end{align*}
 $v\equiv 0$, $\varepsilon=\frac{2}{3n},$
$\sigma,p,\tilde e$ as in \eqref{eq:subsp}, \eqref{eq:subse}, for $(x,t)\in\mathscr{U}$. Outside the mixing zone we consider $\rho=-\rho_0$, $v\equiv 0$ and $\tilde{e}_\varepsilon(x,t)=-\varepsilon gA \abs{x_n}$.

One can check through straightforward calculations that this choice  makes $\rho$ and $m$ (and hence the whole subsolution) continuous at $t=\tilde T$. 

Clearly at $T_{end}$, this subsolution reaches the stable configuration 
$\rho=-\rho_0$, $v\equiv 0$ with no mixing. 
All that remains to be checked is the admissibility on $[\tilde T,T_{end}]$.

Once more, one may easily evaluate the maximum in \eqref{eq:subse} to obtain that for $x_n\geq 0$ one has
$$\tilde{e}_\varepsilon(x,t)=\frac{m_n(x,t)^2}{n(1+\rho(x,t))^2}+\varepsilon gAx_n+\left(1-\rho(x,t)^2\right)\delta(x,t).$$
On the other hand, using once more the parity of $x_n\mapsto\tilde{e}_\varepsilon(x,t)$, plugging in the formulas for $\rho$ and $m$, and using the change of variables $y=\frac{x_n}{a(t)}$, we have
\begin{align*}
\int_\Omega&\frac{n}{2}(\tilde{e}_\varepsilon(x,t)-(1-\rho(x,t)^2)\delta(x,t))+\left(1-\frac{n}{2}\varepsilon\right)gA x_n\rho\:dx
\\&=2\int_0^L \frac{n}{2}\left(\tilde{e}_\varepsilon(x,t)-(1-\rho(x,t)^2)\delta(x,t)\right)+\left(1-\frac{n}{2}\varepsilon\right)gA x_n\rho\:dx_n
\\&=\int_0^{a(t)}\frac{m_n(x,t)^2}{(1+\rho(x,t))^2}+\frac{2}{3} gAx_n-\frac{4}{3}gA\frac{x_n^2}{a(t)}\, dx_n
+2\int_{a(t)}^L \left(-\frac{1}{3}-\frac{2}{3} \right)gA x_n \, dx_n 
\\&=a(t)\int_0^1\frac{\dot{a}(t)^2}{4}(1+y)^2+\frac{2}{3}gAa(t)y-\frac{4}{3}gAa(t)y^2 \, dy-gA(L^2-a(t)^2)
\\&=\frac{7}{12}a(t)\dot{a}(t)^2+\frac{8}{9}gAa(t)^2-gAL^2,
\end{align*}
which is clearly decreasing on $[\tilde{T},T_{end}]$ since both $a$ and $\abs{\dot{a}}$ are decreasing. This concludes the proof of Proposition \ref{prop:demix}.
\end{proof}

\vspace{20pt}
\noindent Mathematisches Institut,  Universit\"at Leipzig,  Augustusplatz 10, D-04109 Leipzig \\
\texttt{bjoern.gebhard@math.uni-leipzig.de}\\
\texttt{jozsef.kolumban@math.uni-leipzig.de}

\begin{thebibliography}{99}

\bibitem{Abarzhi} S. I. Abarzhi, Review of theoretical modelling approaches of Rayleigh-Taylor instabilities and turbulent mixing, 
Phil. Trans. R. Soc. A 368 (2010), 1809--1828.

\bibitem{Boffetta} G. Boffetta, A. Mazzino, Incompressible Rayleigh-Taylor Turbulence, Annu. Rev. of Fluid Mech. 49 (2017), 119--143.

\bibitem{Bronzi} A. C. Bronzi, M. C. Lopes Filho, H. J. Nussenzveig Lopes, Wild solutions for 2{D} incompressible ideal flow with passive tracer, Commun. Math. Sci. 13.5 (2015), 1333--1343. 

\bibitem{Cannon-DiBenedetto} J. R. Cannon, E. DiBenedetto, The initial value problem for the {B}oussinesq equations with data in {$L^{p}$}, Lecture Notes in Math. 771, Springer, Berlin (1980), 129--144.

\bibitem{Castro-Cordoba-Faraco} \'A. Castro, D. C\'ordoba, D. Faraco, Mixing solutions for the Muskat problem, arXiv:1605.04822

\bibitem{Castro-Faraco-Mengual} \'A. Castro, D. Faraco, F. Mengual, Degraded mixing solutions for the Muskat problem, Calc. Var. Partial Differential Equations 58.2 (2019).

\bibitem{Chae} D. Chae, Global regularity for the 2{D} {B}oussinesq equations with partial viscosity terms, Adv. Math. 203.2 (2006), 497--513.

\bibitem{Chae-Nam} D. Chae, H. Nam, Local existence and blow-up criterion for the {B}oussinesq equations, Proc. Roy. Soc. Edinburgh Sect. A 127.5 (1997), 935--946.

\bibitem{Chiodaroli-Kreml} E. Chiodaroli, O. Kreml, On the energy dissipation rate of solutions to the compressible isentropic Euler system, Arch. Rat. Mech. Anal., 214.3 (2014), 1019--1049.

\bibitem{Chiodaroli} E. Chiodaroli, M. Mich\'alek, Existence and Non-uniqueness of Global Weak Solutions to Inviscid Primitive and Boussinesq Equations, Commun. Math. Phys. 353 (2017), 1201--1216.

\bibitem{Cordoba-Faraco-Gancedo} D. C\'ordoba, D. Faraco, F. Gancedo, Lack of uniqueness for weak solutions of the incompressible porous media equation, Arch. Rat. Mech. Anal. 200.3 (2011), 725--746.

\bibitem{Crippa} G. Crippa, N. Gusev, S. Spirito, E. Wiedemann,  Non-Uniqueness and prescribed energy for the continuity equation, Comm. in Math. Sciences 13.7 (2015), 1937--1947. 

\bibitem{Dafermos} C. M. Dafermos, The entropy rate admissibility criterion for solutions of hyperbolic conservation laws, J. Differential Equations 14 (1973), 202--212.

\bibitem{Danchin} R. Danchin, Remarks on the lifespan of the solutions to some models of incompressible fluid mechanics, Proc. Amer. Math. Soc. 141.6 (2013), 1979--1993.

\bibitem{Danchin-Paicu} R. Danchin, M. Paicu, Global existence results for the anisotropic {B}oussinesq system in dimension two, Math. Models Methods Appl. Sci. 21.3 (2011), 421--457.

\bibitem{DeL-Sz-Annals} C. De Lellis, L. Sz\'ekelyhidi Jr., The Euler equations as a differential inclusion, Ann. Math. 170.3 (2009), 1417--1436.

\bibitem{DeL-Sz-Adm} C. De Lellis, L. Sz\'ekelyhidi Jr., On admissibility criteria for weak solutions
of the Euler equations, Arch. Rat. Mech. Anal. 195.1 (2010), 225--260.

\bibitem{Elgindi} T. M. Elgindi, I. Jeong, Finite-time singularity formation for strong solutions to the {B}oussinesq system, Ann. PDE 6.1 (2020).

\bibitem{Feireisl} E. Feireisl, Maximal dissipation and well-posedness for the compressible Euler system, J. Math. Fluid Mech. 16.3 (2014), 447--461.

\bibitem{Foerster-Sz} C. F\"orster, L. Sz\'ekelyhidi Jr., Piecewise constant subsolutions for the Muskat problem, Commun. Math. Phys. 363.3 (2018), 1051--1080.

\bibitem{GKSz} B. Gebhard, J. J. Kolumb\'an, L. Sz\'ekelyhidi Jr., A new approach to the Rayleigh-Taylor
instability, arXiv:2002.08843

\bibitem{Hmidi} T. Hmidi, S. Keraani, F. Rousset, Global well-posedness for {E}uler-{B}oussinesq system with critical dissipation, 36.3 (2011), 420--445.

\bibitem{Hitruhin-Lindberg} L. Hitruhin, S. Lindberg, The lamination convex hull of stationary IPM, arXiv:2006.09720

\bibitem{Hou-Li} T. Y. Hou, C. Li, Global well-posedness of the viscous {B}oussinesq equations, Discrete Contin. Dyn. Syst. 12.1 (2005), 1--12.

\bibitem{Kirchheim} B. Kirchheim, Rigidity and Geometry of microstructures, Habilitation thesis, University of Leipzig (2003).

\bibitem{Mengual} F. Mengual, H-principle for the 2D incompressible porous media equation with viscosity jump, arXiv:2004.03307

\bibitem{Mengual-Sz-KH} F. Mengual, L. Sz\'ekelyhidi Jr., Dissipative Euler flows for vortex sheet initial data without distinguished sign, arXiv:2005.08333

\bibitem{Modena} S. Modena, L. Sz\'ekelyhidi Jr., Non-uniqueness for the transport equation with {S}obolev vector fields, Ann. PDE 4.2 (2018).

\bibitem{Noisette-Sz} F. Noisette, L. Sz\'ekelyhidi Jr., Mixing solutions for the Muskat problem with variable speed, arXiv:2005.08814

\bibitem{Otto} F. Otto, Evolution of microstructure in unstable porous media flow: A relaxational approach, Comm. Pure Appl. Math. 52.7 (1999), 873--915.

\bibitem{Ramaprabhu-Andrews} P. Ramaprabhu, M. J. Andrews, Experimental investigation of Rayleigh-Taylor mixing at small Atwood numbers, J. Fluid Mech. 502 (2004), 233--271.

\bibitem{Rayleigh} L. Rayleigh, Investigations of the character of the equilibrium of an incompressible heavy fluid of variable density, Proc. Lond. Math. Soc. 14 (1883), 170--177.

\bibitem{Sz-Muskat} L. Sz\'ekelyhidi Jr., Relaxation of the incompressible porous media equation, Ann. Scient. \'Ec. Norm. Sup. 45.3 (2012), 491--509.

\bibitem{Sz-KH} L. Sz\'ekelyhidi Jr., Weak solutions to the incompressible Euler equations with vortex sheet initial data, C. R. Acad. Sci. Paris, Ser. I 349 (2011), 1063--1066.

\bibitem{Tartar} L. Tartar, The compensated compactness method applied to systems of conservation laws, NATO Adv. Sci. Inst. Ser. C Math. Phys. Sci. 111, Reidel, Dordrecht (1983), 263--285.

\bibitem{Taylor} G. I. Taylor, The instability of liquid surfaces when accelerated in a direction perpendicular to their planes. I, Proc. R. Soc. Lond. A 201 (1950), 192--196.

\bibitem{Temam} R. Temam, Navier-Stokes Equations: Theory and numerical Analysis, Studies in Math. Appl. 2, North-Holland, Amsterdam, (1979).

\bibitem{Wiedemann} E. Wiedemann, Weak-strong uniqueness in fluid dynamics, London Math. Soc. Lecture Note Ser. 452, Cambridge Univ. Press (2018), 289--326.

\bibitem{Zhou1} Y. Zhou, Rayleigh-Taylor and Richtmyer-Meshkov instability induced flow, turbulence, and mixing. I, Physics Reports Vol. 720-722 (2017), 1--136.

\bibitem{Zhou2} Y. Zhou, Rayleigh-Taylor and Richtmyer-Meshkov instability induced flow, turbulence, and mixing. II, Physics Reports Vol. 723-725 (2017), 1--160.


\end{thebibliography}
\end{document}